\newcommand{\Aut}{\mathop{\rm Aut}\nolimits}
\newcommand{\Aff}{\mathop{\rm Aff}\nolimits}
\newcommand{\SHAut}{\mathop{\rm SHAut}\nolimits}
\newcommand{\G}{\mathcal G}
\newcommand{\A}{\mathcal A}
\newcommand{\Z}{\mathbb Z}
\newcommand{\Sym}{\mathop{\rm Sym}\nolimits}
\newcommand{\bb}{\mathbf b}
\newcommand{\bx}{\mathbf x}
\newcommand{\ba}{\mathbf a}
\newcommand{\bc}{\mathbf c}
\newcommand{\be}{\mathbf e}
\newtheorem{theorem}{Theorem}[section]
\newtheorem{corollary}[theorem]{Corollary}
\newtheorem{proposition}[theorem]{Proposition}
\newtheorem{lemma}[theorem]{Lemma}
\newtheorem{question}{Question}
\theoremstyle{definition}
\newtheorem{definition}{Definition}
\newtheorem*{theoremnormalizer}{Theorem \ref{thm:normalizer}}
\newtheorem*{theoremG}{Theorem \ref{thm:structure_of_G}}
\newtheorem*{theoremL}{Theorem \ref{thm:state-closed_lamplighter}}
\title{Affine Automorphisms of Rooted Trees}
\author{Dmytro M. Savchuk\footnote{Partially Supported by the Proposal Enhancement Grant from University of South Florida and the Simons Collaboration Grant \#317198 from Simons Foundation}\\
               Department of Mathematics and Statistics\\
               University of South Florida\\
               4202 E Fowler Ave\\
               Tampa, FL 33620-5700\\
               \href{mailto:savchuk@usf.edu}{savchuk@usf.edu}\\
               \and
        Said N. Sidki\\
               Departamneto de Matematica,\\
               Universidade de Bras\'{\i}lia,\\
               Brasilia-DF 70910, Brazil\\
               \href{mailto:ssidki@gmail.com}{ssidki@gmail.com}
}
\begin{document}
\maketitle

\begin{abstract}
We introduce a class of automorphisms of rooted $d$-regular trees arising from affine actions on their boundaries viewed as infinite dimensional vector spaces. This class includes, in particular, many examples of self-similar realizations of lamplighter groups. We show that for a regular binary tree this class coincides with the normalizer of the group of all spherically homogeneous automorphisms of this tree: automorphisms whose states coincide at all vertices of each level. We study in detail a nontrivial example of an automaton group that contains an index two subgroup with elements from this class and show that it is isomorphic to the index 2 extension of the rank 2 lamplighter group $\Z_2^2\wr\Z$.
\end{abstract}



\section*{Introduction}

The rooted $d$-regular tree $T_d$ can be naturally identified with the set $X^*$ of all finite words over a finite alphabet $X=\{0,\ldots,d-1\}$. Its boundary, consisting of infinite paths starting from the root without backtracking, can be identified with the set $X^\infty$ of infinite words $a_0a_1\ldots a_i\ldots$ over $X$. Each such infinite word can be represented as an element $a_0 + a_1t+\cdots+a_it^i+\cdots$ of the ring $\Z_d[[t]]$ of formal power series with coefficients in $\Z_d$. Therefore automorphisms from the group $\Aut(X^*)$ of all automorphisms of $T_d$ translate as transformations of $\Z_d[[t]]$.

For example, the automorphism which permutes only the first letter of the input word by the long cycle $\sigma=(0,1,\ldots,d-1)\in\Sym(X)$, translates as a transformation of $\Z_d[[t]]$ defined by $p(t)\mapsto 1+p(t)$. Similarly, the operation of addition by a power series (respectively, by a polynomial) $f(t)=a_0 + a_1t+\cdots+a_it^i+\cdots$ in $\Z_d[[t]]$  corresponds to (respectively, the finitary) automorphism of $T_d$ permuting the $i$-th letter of the input word by $\sigma^{a_{i-1}}$.

Another useful notation that we will use throughout the paper is the following. For an automorphism $g\in\Aut(X^*)$ we denote by $g^{(n)}$ an automorphism of $X^*$ acting trivially on the $n$-th level, and whose states at all vertices of $X^n$ are equal to $g$. For example, $g^{(0)}=g$, $g^{(1)}=(g,g)$, etc. Particularly, we denote by $\sigma^{(n)}$, $n\geq 0$ the automorphism of $X^*$ that acts on the $(n+1)$-st coordinate in the input word by permutation $\sigma$. With this notation the addition of $t^n$ in $\Z_d[[t]]$ exactly corresponds to $\sigma^{(n)}$, and thus the group of automorphisms induced by addition of all possible polynomials in $\Z_d[[t]]$ is the state-closed abelian group $\Delta=\langle \sigma^{(0)},\sigma^{(1)},\ldots,\sigma^{(n)},\ldots\rangle$ consisting of finitary automorphisms of $X^*$.

It was realized in~\cite{gns00:automata}, that the transformations $f(t)\mapsto f(t)+1$ and $f(t)\mapsto (1+t)f(t)$ of $\Z_p[[t]]$ correspond respectively to the automorphisms of the binary tree $a=(0,1)$ and $b=(b,ba)=b^{(1)}(1,\sigma)$ which are the canonical generators of the, so-called, lamplighter group $\Z_2\wr\Z$ studied by Grigorchuk and Zuk~\cite{grigorch_z:lamplighter}. Variants of the lamplighter group had also appeared as normalizers of the group of finitary automorphisms and of its subgroup $\Delta$ (see~\cite{brunner_s:one-rooted97}).

Silva and Steinberg had shown in~\cite{silva_s:lamplighter05} that if $G$ is a finite abelian group then the restricted wreath product $G\wr\Z$ has a faithful representation as an automaton group. Furthermore, Bartholdi and \v Suni\'c in~\cite{bartholdi_s:bsolitar} produced for $G=\Z_d^k$ a different representation of $G\wr\Z$ modeling the multiplication in $\Z_d[[t]]$ by an arbitrary monic polynomial of degree $k$.
Representations of the lamplighter type groups $\Z_d^k\wr\Z$ on the rooted trees arose also in the context of groups acting essentially freely in~\cite{grigorch_s:essfree}, and in a connection to bireversible automata in~\cite{bondarenko_dr:lamplighter}.

All the known representations of lamplighter groups $\Z_d^k\wr\Z=\oplus_{\Z}\Z_d^k\rtimes\Z$ as automaton groups share a common property: the base group $\oplus_{\Z}\Z_d^k$ always consists of commuting automorphisms that act identically at all vertices of each level. More formally, we call an automorphism of the tree $T_d$ \emph{spherically homogeneous} (see~\cite{grigorch_s:essfree}) provided that for each level its states at the vertices of this level all coincide. Thus, each automorphism has a form $a=(b,b,\ldots,b)\sigma_1$, $b=(c,c,\ldots,c)\sigma_2,\ldots$, where $\sigma_i$'s are permutations of $X$. Clearly, all such automorphisms form an uncountable subgroup $\SHAut(X^*)$ of $\Aut(X^*)$ isomorphic to the direct product of countably many copies of $\Sym(X)$. In the case of binary tree this group is abelian, and when $d\geq 3$, it contains an abelian subgroup, which we will denote by $\Aff_{I}(X^*)$, consisting of all spherically homogeneous automorphisms whose permutations at all vertices are powers of the long cycle $\sigma$ (i.e., $\Aff_I(X^*)=\SHAut(X^*)\cap\bigl(\wr_{i=1}^{\infty}\Z_d\bigr)$). This group also can be described as the topological closure of the state-closed abelian group $\Delta$, which simply corresponds to the group induced by addition of all possible power series in $\Z_d[[t]]$.

In all mentioned representations of the lamplighter groups on the rooted tree the generator of $\Z$ normalizes not only the base group, but the whole group $\Aff_I(X^*)$. In order to see what other lamplighter type groups of this kind can be realized by automata, it is natural to ask what is the structure of the normalizer of $\Aff_I(X^*)$ in $\Aut(X^*)$. The class of \emph{affine automorphisms} naturally arises from this question.

The affine automorphisms are the automorphisms of $X^*$ induced by affine transformations of the boundary of $X^*$ viewed as an infinite dimensional (uncountable) vector space $\Z_d^{\infty}$. More precisely, for each upper triangular matrix $A$ over $\Z_d$ with units along the main diagonal and each vector $\bb\in\Z_d^{\infty}$, we define an affine automorphism $\pi_{A,\bb}$ induced by a transformation
\[\pi_{A,\bb}(\bx)=\bb+\bx\cdot A\]
for each $\bx\in\Z_d^\infty$. All automorphisms from this class acting on $X^*$ form a group that we will denote by $\Aff(X^*)$. One of the main theorems of the paper is:
\begin{theoremnormalizer}
The normalizer of the group $\Aff_{I}(X^*)$ in $\Aut(X^*)$ coincides with the group $\Aff(X^*)$ of all affine automorphisms. In particular, in the case of binary tree, the normalizer of the group $\SHAut(\{0,1\}^*)$ of spherically homogeneous automorphisms in $\Aut(\{0,1\}^*)$ is $\Aff(\{0,1\}^*)$.
\end{theoremnormalizer}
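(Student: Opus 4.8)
The plan is to prove the two inclusions $\Aff(X^*)\subseteq N_{\Aut(X^*)}\bigl(\Aff_I(X^*)\bigr)$ and $N_{\Aut(X^*)}\bigl(\Aff_I(X^*)\bigr)\subseteq\Aff(X^*)$ separately. The first is a direct computation. For $\pi_{A,\bb}\in\Aff(X^*)$ one has $\pi_{A,\bb}^{-1}(\bx)=(\bx-\bb)A^{-1}$, hence $\pi_{A,\bb}\,\pi_{I,\bc}\,\pi_{A,\bb}^{-1}(\bx)=\bb+\bigl((\bx-\bb)A^{-1}+\bc\bigr)A=\bx+\bc A$, that is, $\pi_{A,\bb}\,\pi_{I,\bc}\,\pi_{A,\bb}^{-1}=\pi_{I,\bc A}$. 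Since $A$ is invertible, conjugation by $\pi_{A,\bb}$ permutes the translations $\pi_{I,\bc}$ among themselves, so $\Aff(X^*)$ normalizes $\Aff_I(X^*)$.

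For the reverse inclusion fix $g\in N_{\Aut(X^*)}\bigl(\Aff_I(X^*)\bigr)$ and let $\phi$ be the (continuous) automorphism $\tau\mapsto g\tau g^{-1}$ of the profinite abelian group $\Aff_I(X^*)$, which we identify with $\Z_d^\infty=\prod_{n\geq 0}\Z_d$ via $\pi_{I,\bc}\leftrightarrow\bc$. The crucial point is that $\phi$ respects the descending filtration by the subgroups $\Aff_I^{(n)}$ consisting of the translations acting trivially on the first $n$ levels of $X^*$ --- equivalently, the vectors $\bc$ with $c_0=\cdots=c_{n-1}=0$. Indeed $\Aff_I^{(n)}=\Aff_I(X^*)\cap\St_{\Aut(X^*)}(X^n)$ is an intersection of two subgroups each preserved by conjugation by $g$: the pointwise stabilizer of the $n$-th level is normal in $\Aut(X^*)$, and $\Aff_I(X^*)$ is normalized by $g$ by hypothesis. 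Set $M_{ij}:=\bigl(\phi(\be_i)\bigr)_j$. The inclusion $\phi(\be_i)\in\Aff_I^{(i)}$ forces $M_{ij}=0$ whenever $i>j$, and the automorphism of $\Aff_I^{(n)}/\Aff_I^{(n+1)}\cong\Z_d$ induced by $\phi$ is multiplication by $M_{nn}$, which is therefore a unit of $\Z_d$. Using additivity of $\phi$ on the dense subgroup of finitely supported vectors, continuity, and the upper-triangularity just established, one checks $\phi(\bc)=\bc M$ for every $\bc\in\Z_d^\infty$; the same reasoning applied to $\phi^{-1}$ shows $M$ is invertible. Thus $M$ is an upper triangular matrix over $\Z_d$ with units on the diagonal, so $\pi_{M,\zero}\in\Aff(X^*)$, and by the first paragraph $\pi_{M,\zero}$ induces on $\Aff_I(X^*)$ the same automorphism $\phi$ as $g$ does.

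It now suffices to prove $C_{\Aut(X^*)}\bigl(\Aff_I(X^*)\bigr)=\Aff_I(X^*)$: then $\pi_{M,\zero}^{-1}g$ centralizes $\Aff_I(X^*)$, so $\pi_{M,\zero}^{-1}g\in\Aff_I(X^*)$ and therefore $g\in\pi_{M,\zero}\,\Aff_I(X^*)\subseteq\Aff(X^*)$, the last inclusion because $\pi_{M,\zero}\,\pi_{I,\bc}=\pi_{M,\bc M}$. For the centralizer I would pass to the faithful action of $\Aut(X^*)$ on the boundary $X^\infty$ and invoke the classical fact that the centralizer in the full symmetric group $\Sym(X^\infty)$ of a regular abelian group of permutations equals that group itself. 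The translation action $\bx\mapsto\bx+\bc$ of $\Aff_I(X^*)$ on $X^\infty=\Z_d^\infty$ is plainly free and transitive, hence regular, and since it is abelian its left and right translations coincide, giving $C_{\Sym(X^\infty)}\bigl(\Aff_I(X^*)\bigr)=\Aff_I(X^*)$; intersecting with $\Aut(X^*)$ yields the claim. Finally, the binary statement is the case $d=2$: then $\sigma=(0,1)$ generates $\Sym(X)$, so $\SHAut(\{0,1\}^*)=\Aff_I(\{0,1\}^*)$ and the general result applies.

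I expect the main obstacle to be the second paragraph --- pinning down exactly which continuous automorphisms of $\prod_n\Z_d$ actually occur (the filtration argument, and upgrading the identity $\phi(\bc)=\bc M$ from finitely supported $\bc$ to all $\bc$), and, just as importantly, verifying that the matrix $M$ produced this way genuinely defines an element of $\Aff(X^*)$ rather than merely an abstract topological automorphism of $\Aff_I(X^*)$. By contrast, once the regular-action viewpoint is adopted the centralizer computation is short, and the first inclusion is purely mechanical.
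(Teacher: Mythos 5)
Your proof is correct, and its overall architecture matches the paper's: show that $\Aff(X^*)$ normalizes $\Aff_{I}(X^*)$ by the conjugation computation, identify the automorphism of $\Aff_{I}(X^*)\cong\Z_d^\infty$ induced by conjugation by a normalizing element with a matrix in $U_\infty\Z_d$, realize each such matrix by $\pi_{M,\zero}$, and reduce the remainder to the equality $C_{\Aut(X^*)}(\Aff_{I}(X^*))=\Aff_{I}(X^*)$ (the paper packages this last reduction as Lemma~\ref{lem:centralizer} together with the $N/C$ theorem). Two genuine differences deserve comment. First, your centralizer argument is different from the paper's: Lemma~\ref{lem:centralizer} is proved there by a two-case analysis at the first level where commutation fails, whereas you pass to the boundary and invoke the classical fact that a regular abelian subgroup of the symmetric group on $X^\infty$ is its own centralizer; since the translation action on $\Z_d^\infty$ is visibly free and transitive and $\Aut(X^*)$ acts faithfully on the boundary, this is shorter and equally valid. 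Second, your handling of the matrix identification is in fact more careful than the paper's: the paper asserts that every automorphism of $C\cong\Z_d^\infty$ is given by a matrix whose rows are the images of the $\sigma^{(i)}$, which is not literally true for an abstract automorphism of an uncountable product, since the $\be_i$ do not generate it; your observation that conjugation preserves the filtration by the subgroups $\Aff_I^{(n)}=\Aff_{I}(X^*)\cap\St(X^n)$ (which is exactly the continuity needed) and your extension of $\phi(\bc)=\bc M$ from finitely supported vectors to all of $\Z_d^\infty$ via upper-triangularity supply precisely the missing justification. The only blemishes are cosmetic: your composition order is opposite to the paper's right-action convention \eqref{eqn_conv} (harmless, since either way a translation is sent to a translation by an invertible image of its vector), and the assertion $\phi(\be_i)\in\Aff_I^{(i)}$ requires the $0$-indexing of coordinates you adopt, which differs from the paper's indexing of the $\be_i$.
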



As our main motivating example, we completely describe the structure of a group $\G$ generated by a 4-state automaton shown in Figure~\ref{fig:automG}. This group has been considered by Klimann, Picantin, and the first author~\cite{klimann_ps:orbit_automata} where it was shown that it contains elements of infinite order. Initially this group was one of the 6 groups among those generated by 7471 non-minimally symmetric 4-state 2-letter automata, for which ``standard'' methods of finding elements of infinite order failed~\cite{caponi:thesis2014}. We prove

\begin{theoremG}
The group $\G=\langle a=(d,d)\sigma,b=(c,c),c=(a,b),d=(b,a)\rangle$ is isomorphic to the index 2 extension of the rank 2 lamplighter group:
\[G\cong \bigl(\Z_2^2\wr\Z\bigr)\rtimes\Z_2=\bigl(\langle x,y\rangle\wr\langle t\rangle\bigr)\rtimes\langle a\rangle,\]
where the action of $a$ on $x=ab,y=cd,t=ac$ is defined as follows: $x^a=x$, $y^a=y^{t^{-1}}$, $t^a=t^{-1}$. Moreover, $\G$ contains an index 2 subgroup consisting of affine automorphisms.
\end{theoremG}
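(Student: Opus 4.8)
The strategy is to produce a surjection $\Gamma:=(\Z_2^2\wr\Z)\rtimes\Z_2\twoheadrightarrow\G$, with $x:=ab$, $y:=cd$, $t:=ac$ playing the roles of the two lamp generators and the $\Z$-generator and $a$ that of $\Z_2$, and then to prove it is an isomorphism by passing to the action on the tree.

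\emph{The surjection.} Since $(a^2,b^2,c^2,d^2)$ obeys the same wreath recursion as $(1,1,1,1)$, uniqueness of finite-state solutions forces all four generators to be involutions; the same principle applied to $x=(dc,dc)\sigma$ with $dc=(x^{-1},x)$ gives $x^2=1$, whence $y=cd=(ab,ba)=(x,x^{-1})=(x,x)$ is also an involution and $\langle x,y\rangle$ is a quotient of $\Z_2^2$. From $x=((x,x),(x,x))\sigma$ and $y=(x,x)$ one reads off that $x,y$ are the translations $p\mapsto p+(1+t)^{-2}$ and $p\mapsto p+t(1+t)^{-2}$ of $\Z_2[[t]]$, so $x,y\in\SHAut(\{0,1\}^*)$. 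I would then check that $t$ normalizes $\SHAut(\{0,1\}^*)$ — either by noting that $a,b,c,d$ are themselves affine automorphisms (their defining recursion is compatible with the fixed-point equations for unipotent upper-triangular matrices and translation vectors over $\Z_2$), so $t=ac\in\Aff(\{0,1\}^*)$ and Theorem~\ref{thm:normalizer} applies, or directly by an induction over the levels of the tree using $t=(da,db)\sigma$. Consequently all $\langle t\rangle$-conjugates of $x$ and $y$ lie in the abelian group $\SHAut(\{0,1\}^*)$, so $N:=\langle x,y\rangle^{\langle t\rangle}$ is abelian of exponent $2$ and $\langle x,y,t\rangle$ is a quotient of $\Z_2^2\wr\Z$. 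Finally, given $a^2=x^2=y^2=1$, the identities $x^a=x$, $t^a=t^{-1}$, $y^a=y^{t^{-1}}$ are pure group arithmetic (e.g. $t^a=a(ac)a=ca=(ac)^{-1}=t^{-1}$ and $y^a=a(cd)a=(ac)(da)=t(ad)^{-1}=t(ty)^{-1}=tyt^{-1}$, using $ad=(ac)(cd)=ty$). These relations together yield $\Gamma\twoheadrightarrow\G$.

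\emph{The decomposition.} Put $H:=\langle x,y,t\rangle$. As $x,y,t$ and $ad=ty$ are words of even length in $a,b,c,d$, $H$ is the kernel of the parity homomorphism $\G\to\Z_2$ sending each generator to $1$; hence $[\G:H]=2$, $a\notin H$, and $\G=H\rtimes\langle a\rangle$ with the action computed above.

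\emph{Faithfulness.} The remaining and main point is that $\Gamma\twoheadrightarrow\G$ is injective, i.e. that $H\cong\Z_2^2\wr\Z$ with no collapse; equivalently, that $N\cong\bigoplus_\Z\Z_2^2$ with $t$ acting as the shift. Since $N$ is a group of translations of $\Z_2[[t]]$, this amounts to showing that the images of $(1+t)^{-2}$ and $t(1+t)^{-2}$ under the (explicit, but \emph{not} scalar) $\Z_2$-linear operator $T$ by which conjugation by $t$ acts on $\Z_2[[t]]$ form a $\Z_2$-linearly independent family $\{T^k(1+t)^{-2},\,T^kt(1+t)^{-2}:k\in\Z\}$. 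This is the step I expect to be the principal obstacle: one must pin down $T$ from the recursions of the non-symmetric states $c,d$, which mix the two subtrees in an essential way. Granting it, $N\cong\bigoplus_\Z\Z_2^2$, $t$ has infinite order (the shift on $\bigoplus_\Z\Z_2^2$ does), and $H\cong\Z_2^2\wr\Z$; combined with the previous paragraph, $\G\cong(\Z_2^2\wr\Z)\rtimes\Z_2$ with the stated $\langle a\rangle$-action.

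\emph{Affine automorphisms.} Finally $x,y\in\SHAut(\{0,1\}^*)\subseteq\Aff(\{0,1\}^*)$, and $t\in\Aff(\{0,1\}^*)$ by the argument above; hence $H=\langle x,y,t\rangle\leq\Aff(\{0,1\}^*)$, which is the advertised index-$2$ subgroup of $\G$ consisting of affine automorphisms.
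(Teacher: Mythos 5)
Your setup is sound and, up to the final step, consistent with the paper: the identification $x=(y,y)\sigma$, $y=(x,x)$ showing $x,y\in\SHAut(\{0,1\}^*)$ (indeed $x,y$ are the translations by $(1+t)^{-2}$ and $t(1+t)^{-2}$), the fact that $t$ normalizes $\SHAut(\{0,1\}^*)$ (the paper proves this by induction on the conjugates of $\sigma^{(n)}$ in Lemma~\ref{lem:t_in_norm}), the relations $x^a=x$, $y^a=y^{t^{-1}}$, $t^a=t^{-1}$, and the index-two decomposition are all correct or correctable. Two smaller points: the parity homomorphism $\G\to\Z_2$ must be shown to be well defined (an easier route is to observe that once $t$ is known to have infinite order, $t^a=t^{-1}$ is incompatible with $a\in\langle x,y,t\rangle\cong\Z_2^2\wr\Z$, since inner automorphisms of that group act trivially on its quotient $\Z$); and your parenthetical claim that $a,b,c,d$ are individually affine, while true, requires actually solving the fixed-point system for the matrices and translation vectors and invoking uniqueness of solutions of the wreath recursion --- ``compatibility'' alone is not an argument.

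The genuine gap is exactly the step you defer. Proving that the family $\{T^k\bb_x,\,T^k\bb_y : k\in\Z\}$ is $\Z_2$-linearly independent, i.e.\ that no nontrivial product $x^{p(t)}y^{q(t)}$ with $p,q\in\Z_2[t,t^{-1}]$ is trivial, is not a routine verification: the operator $T$ (conjugation by $t$ on the translation group) is given by a matrix of period two rather than one, so it is not multiplication by a power series and no closed-form rationality argument applies. The paper devotes Lemmas~\ref{lem:xpq}--\ref{lem:xpyq} to precisely this point: it computes how a hypothetical relation $x^{p(t)}y^{q(t)}=1$ transforms under passage to first-level states (the exponent pair $(p,q)$ maps to $(\psi_p+q,p)$ or $(t\psi_p+q,p)$ in alternating coordinates, where $\psi_p$ drops degree by one), reduces to the case $\deg p=\deg q$ (Lemma~\ref{lem:same_deg}), and then derives a contradiction with minimality of $\max\{\deg p,\deg q\}$ after descending three more levels. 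Without this, or an equivalent independence argument for the orbit of $\bb_x,\bb_y$ under the explicit period-two matrix of $t$, you have only a surjection $\bigl(\Z_2^2\wr\Z\bigr)\rtimes\Z_2\twoheadrightarrow\G$; in particular you have not shown that $t$ has infinite order, which is the entire difficulty with this automaton and the reason it resisted standard methods.
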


We finally show that in the case of the lamplighter group $\Z_2\wr\Z$ acting faithfully on the binary tree, it is not a coincidence that we always see spherically homogeneous automorphisms. Namely, in Section~\ref{sec:state-closed_binary} we prove the following theorem.

\begin{theoremL}
Each state-closed faithful representations of the lamplighter group $\Z_2\wr\Z$ on the binary tree is conjugate to the one with the base group consisting of spherically homogeneous automorphisms.
\end{theoremL}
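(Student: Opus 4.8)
The plan is to exploit that the base group is intrinsic. Write $L=B\rtimes\langle s\rangle$, where $B=\bigoplus_{i\in\Z}\langle a_i\rangle\cong\bigoplus_{\Z}\Z_2$ is the base group and $a_i^{\,s}=a_{i+1}$, and fix a faithful state-closed representation $\rho\colon L\hookrightarrow\Aut(\{0,1\}^*)$. Since $L/B\cong\Z$ is torsion free while $B$ has exponent $2$, the subgroup $B$ is exactly the set of torsion elements of $L$, so $B$ is characteristic and $\rho(B)$ is the torsion part of $\rho(L)$. Thus everything reduces to showing that $\rho(B)$ is conjugate inside $\Aut(\{0,1\}^*)$ into $\SHAut(\{0,1\}^*)$. (Once this is established one should also verify that $\overline{\rho(B)}=\SHAut(\{0,1\}^*)$; then $\rho(L)$ normalizes $\SHAut(\{0,1\}^*)$, so by Theorem~\ref{thm:normalizer} $\rho(L)\le\Aff(\{0,1\}^*)$, and the representation is the expected affine, in particular state-closed, one.)

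Next I would analyze the top level. All the involutions $\rho(a_i)$ are conjugate in $\rho(L)$, and whether an automorphism of the binary tree fixes or interchanges the two maximal subtrees is a conjugacy invariant (the top-level permutation lies in the abelian group $\Sym(\{0,1\})$); so either all $\rho(a_i)$ fix the first level, or all of them swap the two top subtrees. The first alternative is impossible: if $\rho(B)$ acts trivially on the first level, then for $g\in\rho(B)$ one has $g=(g|_0,g|_1)$ with $(g|_0)^2=(g|_1)^2=1$, so the sections of elements of $\rho(B)$ are torsion elements of $\rho(L)$, hence again in $\rho(B)$; thus $\rho(B)$ is state-closed, and then every section of every element of $\rho(B)$ again acts trivially on its first level, which forces $\rho(B)=1$, contradicting faithfulness. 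Hence all $\rho(a_i)$ swap the two top subtrees, and we may write $\rho(a_i)=(h_i,h_i^{-1})\sigma$.

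The crucial claim will then be that \emph{each section $h_i$ is a torsion element of $\rho(L)$}, equivalently that all sections of elements of $\rho(B)$ are torsion. Granting it, $h_i^{-1}=h_i$, so $\rho(a_i)=(h_i,h_i)\sigma$ and every element of $\rho(B)$ has equal sections at the two first-level vertices; that is, $\rho(B)$ is spherically homogeneous \emph{at the first level}, and $g\mapsto g|_0$ carries $\rho(B)$ onto an elementary abelian $2$-group $\rho(B)|_0$ that again sits as the torsion part of a faithful state-closed representation of a lamplighter-type group on the first subtree — the shift being induced by a section of $\rho(s)$, or of $\rho(s^2)$ if $\rho(s)$ itself swaps the two top subtrees. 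Iterating this analysis down the tree produces, level by level, the conjugator $\gamma\in\Aut(\{0,1\}^*)$ with $\gamma\rho(B)\gamma^{-1}\subseteq\SHAut(\{0,1\}^*)$, and combining with the first paragraph finishes the theorem.

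The hard part will be precisely this claim: that a section $h_0=\rho(a_0)|_0$ of a lamp cannot have infinite order. Because $\rho(a_0)$ swaps the two subtrees, $\rho(a_0)=(h_0,h_0^{-1})\sigma$ is an involution for \emph{every} $h_0$, so nothing about $\rho(a_0)$ in isolation forbids it; the finiteness of $h_0$ has to be wrung out of the surrounding structure — that all the $\rho(a_j)$ commute with $\rho(a_0)$ and with one another, and that $L$ has a single "infinite direction" $L/B\cong\Z$ — together with the constraints that state-closedness imposes on the length homomorphism $\lambda\colon\rho(L)\to\rho(L)/\rho(B)\cong\Z$ evaluated along the sections of $\rho(s)$ and $\rho(a_0)$. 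I expect that pushing this claim through all levels so that the recursion of the previous paragraph genuinely closes, gluing the level-by-level conjugations into one tree automorphism, and checking that the outcome remains state-closed, together constitute the technical heart of the proof, the rest being the soft structural argument sketched above.
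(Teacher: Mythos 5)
Your overall strategy is genuinely different from the paper's: the paper does not analyze the tree action directly, but instead parametrizes all faithful state-closed representations of $\Z_2\wr\Z$ through similarity pairs $(H,f)$, notes that $H$ must be one of the two index-$2$ subgroups $A_0\langle x\rangle$, $A_0\langle ax\rangle$, quotes the classification of simple virtual endomorphisms $f$ from the Dantas--Sidki preprint, and then simply computes the wreath recursion~\eqref{eqn:virtual_end_rep} to see that every element of the base group decomposes with equal first-level sections. Your soft structural observations (the base group $B$ is the torsion subgroup, hence characteristic; all lamps have the same first-level activity since that activity is a conjugacy invariant; the inactive case contradicts faithfulness) are correct and would be a reasonable opening for a direct proof.

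However, there is a genuine gap, and you have located it yourself: the claim that the section $h_0=\rho(a_0)|_0$ is a torsion element of $\rho(L)$ is never proved. Everything hinges on it --- without it you cannot conclude $h_i^{-1}=h_i$, and the commutation relations among the $\rho(a_i)$ only yield that the \emph{ratios} $h_ih_j^{-1}$ are torsion (equivalently, that all $h_i$ have the same image under the length homomorphism $\lambda\colon\rho(L)\to\Z$), not that this common image is zero. You gesture at ``constraints that state-closedness imposes on $\lambda$ evaluated along the sections of $\rho(s)$ and $\rho(a_0)$,'' but no such constraint is actually derived, and this is precisely where the real content of the theorem lies; the paper sidesteps the issue entirely by importing the classification of simple virtual endomorphisms. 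In addition, the downward recursion --- that $\rho(B)|_0$ is again the torsion part of a faithful state-closed lamplighter-type action on the subtree, and that the level-by-level data glue into a single conjugating automorphism --- is only asserted, not carried out. As it stands this is a plausible proof outline whose central step is missing, not a proof.
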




The structure of the paper is as follows. In Section~\ref{sec:prelim} we recall basic notions from the theory of automaton groups and set up terminology and notation. Section~\ref{sec:affine} introduces affine automorphisms and studies general properties of groups generated by them. In Section~\ref{sec:example} we describe in detail a nontrivial example of a group containing affine automorphisms and generating an index 2 extension of the rank 2 lamplighter group. Section~\ref{sec:state-closed_binary} studies the faithful state closed representations of the lamplighter group on the binary tree. Finally, we conclude the paper with several open questions listed in Section~\ref{sec:open}.

\noindent\textbf{Acknowledgement.} The first author is grateful to Rostislav Grigorchuk for fruitful discussions that stimulated the development of the paper.

\section{Preliminaries}
\label{sec:prelim}
We start from introducing the notions of tree automorphisms and Mealy automata. For more details we refer the reader to~\cite{gns00:automata}.

Let $X$ be a finite set of cardinality $d\geq 2$ and let $X^*$ denote the set of all finite words over $X$. This set can be naturally endowed with a structure of a rooted $d$-ary tree by declaring that $v$ is adjacent to $vx$ for any $v\in X^*$ and $x\in X$. The empty word corresponds to the root of the tree and $X^n$ corresponds to the $n$-th level of the tree. We will be interested in subgroups of the groups $\Aut(X^*)$ of all automorphisms of $X^*$ (as a graph). Any such automorphism can be defined via the notion of an initial Mealy automaton (possibly infinite).

\begin{definition}
A \emph{Mealy automaton} (or simply \emph{automaton}) is a tuple
$(Q,X,\pi,\lambda)$, where $Q$ is a set (the set of states), $X$ is a
finite alphabet, $\pi\colon Q\times X\to Q$ is the \emph{transition function}
and $\lambda\colon Q\times X\to X$ is the \emph{output function}. If the set
of states $Q$ is finite the automaton is called \emph{finite}. If
for every state $q\in Q$ the output function $\lambda(q,x)$ induces
a permutation of $X$, the automaton $\A$ is called \emph{invertible}.
Selecting a state $q\in Q$ produces an \emph{initial automaton}
$\A_q$.
\end{definition}

Automata are often represented by their \emph{Moore diagrams}. The
Moore diagram of an automaton $\A=(Q,X,\pi,\lambda)$ is a directed
graph in which the vertices are the states from $Q$ and the edges
have form $q\stackrel{x|\lambda(q,x)}{\longrightarrow}\pi(q,x)$ for
$q\in Q$ and $x\in X$. An example of a Moore diagram is shown in
Figure~\ref{fig:automG}.

Any invertible initial automaton $\A_q$ induces an automorphism of $X^*$ defined as follows. Given a word
$v=x_1x_2x_3\ldots x_n\in X^*$ it scans its first letter $x_1$ and
outputs $\lambda(q,x_1)$. The rest of the word is handled in a similar
fashion by the initial automaton $\A_{\pi(q,x_1)}$. Formally speaking,
the functions $\pi$ and $\lambda$ can be extended to $\pi\colon
Q\times X^*\to Q$ and $\lambda\colon  Q\times X^*\to X^*$ via
\[\begin{array}{l}
\pi(q,x_1x_2\ldots x_n)=\pi(\pi(q,x_1),x_2x_3\ldots x_n),\\
\lambda(q,x_1x_2\ldots x_n)=\lambda(q,x_1)\lambda(\pi(q,x_1),x_2x_3\ldots x_n).\\
\end{array}
\]

Note that any automorphism of $X^*$ induces the action on the set $X^\infty$ of all infinite words over $X$ that is identified with the \emph{boundary} of the tree $X^*$ consisting of all infinite paths in the tree without backtracking initiating at the root. The boundary of the tree is homeomorphic to the Cantor set and $\Aut(X^*)$ embeds into the group of homeomorphisms of this set.

\begin{definition}
The group generated by all states of an automaton $\A$ viewed as automorphisms
of the rooted tree $X^*$ under the operation of composition is
called an \emph{automaton group}.
\end{definition}

Note, that we do not require in the definition an automaton $\A$ to be finite. With this convention, the above notion is equivalent to the notions of \emph{self-similar group}~\cite{nekrash:self-similar} and \emph{state-closed group}~\cite{nekrash_s:12endomorph}. However, most of the interesting examples of automaton groups are finitely generated groups defined by finite automata.

Conversely, any automorphism of $X^*$ can be encoded by the action
of an invertible initial automaton. In order to show this we will need a notion of a
\emph{state} (often also called \emph{section}) of an automorphism at a vertex of the tree. Let $g$ be
an automorphism of the tree $X^*$ and $x\in X$. Then for any $v\in
X^*$ we have
\[g(xv)=g(x)v'\]
for some $v'\in X^*$. Then the map $g|_x\colon X^*\to X^*$ given by
\[g|_x(v)=v'\]
defines an automorphism of $X^*$ called the \emph{state} of
$g$ at vertex $x$. Furthermore,  for any finite word $x_1x_2\ldots x_n\in X^*$
we define \[g|_{x_1x_2\ldots x_n}=g|_{x_1}|_{x_2}\ldots|_{x_n}.\]

Given an automorphism $g$ of $X^*$ we construct an invertible initial automaton
$\A(g)$ whose action on $X^*$ coincides with that of $g$ as follows.
The set of states of $\A(g)$ is the set $\{g|_v\colon  v\in X^*\}$
of different states of $g$ at the vertices of the tree. The
transition and output functions are defined by
\[\begin{array}{l}
\pi(g|_v,x)=g|_{vx},\\
\lambda(g|_v,x)=g|_v(x).
\end{array}\]

Throughout the paper we will use the following convention. If $g$
and $h$ are the elements of some (semi)group acting on set $Y$ and
$y\in Y$, then
\begin{equation}
\label{eqn_conv}
gh(y)=h(g(y)).
\end{equation}

Taking into account convention~\eqref{eqn_conv} one can compute
the states of any element of an automaton semigroup as follows. If
$g=g_1g_2\cdots g_n$ and $v\in X^*$, then

\begin{equation}
\label{eqn_states} g|_v=g_1|_v\cdot g_2|_{g_1(v)}\cdots
g_n|_{g_1g_2\cdots g_{n-1}(v)}.
\end{equation}

For any automaton group $G$ there is a natural embedding
\[G\hookrightarrow G \wr \Sym(X)\]
defined by
\[G\ni g\mapsto (g_1,g_2,\ldots,g_d)\sigma(g)\in G\wr \Sym(X),\]
where $g_1,g_2,\ldots,g_d$ are the states of $g$ at the vertices
of the first level, and $\sigma(g)$ is a permutation of $X$ induced by the action of $g$ on the first level of the tree.

The above embedding is convenient in computations involving the
states of automorphisms, as well as for defining automaton groups. Sometimes it is called the \emph{wreath recursion} defining the group.

We conclude this section by a short discussion regarding spherically homogeneous automorphisms of $X^*$.

\begin{definition}
An automorphism $g$ of the tree $X^*$ is called \emph{spherically homogeneous} if for each level $l$ the states of $g$ at all vertices of $X^l$ act identically on the first level.
\end{definition}

It is a trivial observation that an automorphism $g$ is spherically homogeneous if and only if for all words $u,v\in X^*$ of the same length, $g|_u=g|_v$. For example, automorphisms $a=(a,a)\sigma,b=(a,a)$ are spherically homogeneous automorphisms of the binary tree. Every spherically homogeneous automorphism can be defined by a sequence $\{\sigma_n\}_{n\geq 1}$ of permutations of $X$ where $\sigma_n$ describes the action of $g$ on the $n$-th letter of the input word over $X$. Given a sequence $\{\sigma_n\}_{n\geq 1}$ we will denote the corresponding spherically homogeneous automorphism by $[\sigma_n]_{n\geq 1}$ or simply as $[\sigma_1,\sigma_2,\sigma_3,\ldots]$.

Obviously, all spherically homogeneous automorphisms of $X^*$ form a group, which we denote by $\SHAut(X^*)$, isomorphic to a product of uncountably many copies of $\Sym(|X|)$. In the case of the binary tree, this group is abelian and isomorphic to the abelianization of $\Aut(T_2)$.

Note, that for a finite state automorphism $g$ of $X^*$ it is algorithmically decidable whether $g$ is spherically homogeneous. First one checks if all states of $g$ at the vertices of the first level coincide. If this is not the case, then $g$ is not in $\SHAut(X^*)$. Otherwise, we repeat the procedure for the state $g|_x$, $x\in X$ (that does not depend on $x$). Since $g$ is finite state, this procedure will eventually terminate.

\section{Affine automorphisms of the tree}
\label{sec:affine}

A finite alphabet $X$ of cardinality $d$ can be endowed with the structure of the cyclic group $\Z_d$ of size $d$, and the boundary of the tree can be naturally identified with an infinite dimensional vector space $\Z_d^\infty$. After fixing a natural basis consisting of vectors $\be_i=[0,0,\ldots,0,1,0,\ldots],i\geq 1$ with the 1 at position $i$, elements of $\Z_d^\infty$ can be represented by infinite row vectors. We will consider automorphisms of $X^*$ that are induced by affine transformations of the boundary under the above identification.

Let $A$ be an infinite upper triangular matrix with entries from $\Z_d$ whose diagonal entries are units in $\Z_d$. We will denote the set of all such matrices by $U_{\infty}\Z_d$ (note, that finite dimensional upper triangular matrices describing certain automorphisms of rooted trees were used in a different context in~\cite{bartholdi_s:bsolitar}). Let also $\bb\in\Z_d^\infty$ be a row vector. Define the transformation $\pi_{A,\bb}\colon \Z_d^\infty\to\Z_d^\infty$ by
\[\pi_{A,\bb}(\bx)=\bb+\bx\cdot A.\]
Note that since $A$ is upper triangular, $\pi_{A,\bb}(\bx)$ is always well-defined.

\begin{proposition}
For each matrix $A\in U_{\infty}\Z_d$ and vector $\bb\in\Z_d^\infty$, the transformation $\pi_{A,\bb}$ induces an automorphism of the tree $X^*$.
\end{proposition}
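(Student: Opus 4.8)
The plan is to show that $\pi_{A,\bb}$ is a well-defined bijection of $\Z_d^\infty$ that moreover preserves the tree structure of $X^*$, i.e. that it restricts to a bijection of each level $X^n$ and is consistent with the prefix (adjacency) relation. Since $X^*$ is identified with the set of finite initial segments of elements of $X^\infty = \Z_d^\infty$, the key point is that the value of a coordinate $\pi_{A,\bb}(\bx)_j$ depends only on $\bx_1,\dots,\bx_j$ (because $A$ is upper triangular, so in the sum $(\bx\cdot A)_j = \sum_i \bx_i A_{ij}$ only terms with $i\le j$ survive). This ``finite dependency'' is exactly what guarantees that $\pi_{A,\bb}$ descends to a well-defined map $X^n\to X^n$ for every $n$, and that these maps are compatible with truncation: if $\bx$ and $\bx'$ agree on their first $n$ coordinates then $\pi_{A,\bb}(\bx)$ and $\pi_{A,\bb}(\bx')$ agree on their first $n$ coordinates. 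An automorphism of the rooted tree $X^*$ is precisely a prefix-preserving bijection of $X^*$, equivalently a sequence of compatible bijections of the levels, so this is the property to establish.

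First I would fix $n$ and define $\pi_{A,\bb}^{(n)}\colon X^n\to X^n$ by letting the $j$-th coordinate of the image ($1\le j\le n$) be $\bb_j + \sum_{i=1}^{j}\bx_i A_{ij}$; the upper-triangularity makes this well-defined as a map on length-$n$ words. Second, I would check that $\pi_{A,\bb}^{(n)}$ is a bijection of $X^n$. This is the only step with any content, and it is the main (mild) obstacle: one argues by induction on the coordinate, solving for $\bx_j$ in terms of the image coordinates and $\bx_1,\dots,\bx_{j-1}$. The equation for the $j$-th image coordinate reads $y_j = \bb_j + \bx_j A_{jj} + \sum_{i<j}\bx_i A_{ij}$, and since $A_{jj}$ is a unit in $\Z_d$ it is invertible, so given $y_1,\dots,y_n$ one recovers $\bx_1$ (from the $j=1$ equation), then $\bx_2$, and so on; this both proves injectivity and exhibits a preimage, hence surjectivity. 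Equivalently, $A$ is invertible over $\Z_d$ with upper-triangular inverse $A^{-1}$, so $\pi_{A,\bb}^{-1} = \pi_{A^{-1},\,-\bb A^{-1}}$ is a two-sided inverse of the same form — but the per-level inductive argument is cleaner since it directly handles the finite truncations.

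Third, I would observe compatibility across levels: the restriction of $\pi_{A,\bb}^{(n)}$ to the first $m$ coordinates (for $m<n$) equals $\pi_{A,\bb}^{(m)}$, again because coordinate $j$ of the image uses only $\bx_1,\dots,\bx_j$. Hence the maps $\pi_{A,\bb}^{(n)}$ glue to a single bijection of $X^*$ which sends $v$ to a word of the same length and sends $vx$ to a word extending the image of $v$; that is exactly a tree automorphism, and it agrees with the boundary transformation $\pi_{A,\bb}$ on $X^\infty$. Finally I would note that this also shows the assignment $(A,\bb)\mapsto \pi_{A,\bb}$ lands in $\Aut(X^*)$, which is all that is claimed; the group-theoretic refinements (composition law, the fact that these form a subgroup $\Aff(X^*)$) are left for the subsequent discussion.
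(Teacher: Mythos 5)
Your proposal is correct and is essentially the paper's own argument, spelled out in more detail: the paper's two-line proof likewise invokes invertibility of $A$ over $\Z_d$ (from the unit diagonal) for bijectivity and upper-triangularity for preservation of the incidence relation, which is exactly your ``coordinate $j$ depends only on $\bx_1,\dots,\bx_j$'' observation. Your level-by-level formulation with the inductive recovery of $\bx_j$ is just a careful expansion of the same idea.
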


\begin{proof}
Since by construction the matrix $A$ is invertible over $\Z_d$, the transformation $\pi_{A,\bb}$ is a bijection. Moreover, the triangular form of the matrix guarantees that the incidence relation in $X^*$ is preserved.
\end{proof}

With a slight abuse of notation we will denote the induced by $\pi_{A,\bb}$ automorphism of $X^*$ also by $\pi_{A,\bb}$. Recall that according to our convention of the right action, in the composition $gh$ of automorphisms of $X^*$ the automorphism $g$ acts first. With this convention in mind, we first list trivial properties of affine automorphisms that follow from corresponding properties of affine transformations of $\Z_d^\infty$:

\begin{proposition}
\label{prop:affine_basic} Let $A, A'$ be matrices over $\Z_d$, and $\bb,\bb'$ be vectors in $\Z_d^\infty$.
\begin{itemize}
\item[(a)] Automorphisms $\pi_{A,\bb}$ and $\pi_{A',\bb'}$ of $X^*$ coincide if and only if $A=A'$ and $b=b'$.
\item[(b)] $\pi_{A,\bb}\cdot\pi_{A',\bb'}=\pi_{AA',\bb A'+\bb'}$
\item[(c)] $\pi_{A,\bb}^{-1}=\pi_{A^{-1},-\bb A^{-1}}$
\end{itemize}
\end{proposition}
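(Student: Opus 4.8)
The plan is to reduce all three statements to elementary computations with affine maps of the boundary $X^\infty=\Z_d^\infty$, the one genuinely non-formal point being the well-definedness of products and inverses of infinite upper triangular matrices; throughout one must respect the right-action convention~\eqref{eqn_conv}. First I would record the passage between the two levels: by the preceding proposition each $\pi_{A,\bb}$ is an automorphism of $X^*$, and since the image of a finite word under a tree automorphism is the corresponding prefix of the image of any infinite word extending it, two automorphisms of $X^*$ agree precisely when they agree as transformations of $\Z_d^\infty$; so it suffices to argue with the maps $\bx\mapsto\bb+\bx A$ themselves. Part (a) is then immediate: if $\pi_{A,\bb}=\pi_{A',\bb'}$ on $\Z_d^\infty$, evaluating at $\bx=0$ gives $\bb=\bb'$, after which $\bx A=\bx A'$ for all $\bx$, and evaluating at the basis vectors $\be_i$ shows that $A$ and $A'$ have the same rows, so $A=A'$ (the converse being trivial).

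For (b), convention~\eqref{eqn_conv} gives, for every $\bx\in\Z_d^\infty$,
\[
(\pi_{A,\bb}\cdot\pi_{A',\bb'})(\bx)=\pi_{A',\bb'}\bigl(\bb+\bx A\bigr)=\bb'+(\bb+\bx A)A'=(\bb A'+\bb')+\bx(AA'),
\]
which is exactly the map $\pi_{AA',\,\bb A'+\bb'}$; here one should note $AA'\in U_{\infty}\Z_d$, since each entry of a product of upper triangular matrices is a finite sum and the diagonal entries of $AA'$ are the (unit) products of those of $A$ and $A'$. Part (c) follows by applying (b): $\pi_{A,\bb}\cdot\pi_{A^{-1},-\bb A^{-1}}=\pi_{AA^{-1},\,\bb A^{-1}-\bb A^{-1}}=\pi_{I,0}=\mathrm{id}$, and symmetrically on the other side, hence $\pi_{A,\bb}^{-1}=\pi_{A^{-1},-\bb A^{-1}}$ — provided $A^{-1}$ exists and again lies in $U_{\infty}\Z_d$.

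The step I expect to need the most care, and the only one that is not purely formal, is precisely that every $A\in U_{\infty}\Z_d$ is invertible with $A^{-1}\in U_{\infty}\Z_d$ (this also underlies the invertibility assertion in the preceding proposition). I would write $A=D(I+N)$ with $D$ the diagonal part, invertible since its entries are units, and $N$ strictly upper triangular; since $(N^{k})_{ij}=0$ whenever $j<i+k$, only finitely many terms of the Neumann series $\sum_{k\ge 0}(-1)^{k}N^{k}$ contribute to any fixed entry, so it converges entrywise to a two-sided inverse of $I+N$ that is again the identity plus a strictly upper triangular matrix. Therefore $A^{-1}=(I+N)^{-1}D^{-1}$ is upper triangular with unit diagonal, i.e.\ lies in $U_{\infty}\Z_d$, which completes (b) and (c).
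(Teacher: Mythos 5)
Your proof is correct and follows essentially the route the paper intends: the paper states these as ``trivial properties \ldots\ that follow from corresponding properties of affine transformations of $\Z_d^\infty$'' and gives no further argument, and your write-up is exactly that reduction carried out in detail. The one substantive point you add --- that $U_{\infty}\Z_d$ is closed under products and inverses, via $A=D(I+N)$ and the entrywise-finite Neumann series --- is correct and is indeed the only non-formal step.
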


The last proposition guarantees that the set
\[\Aff(X^*)=\{\pi_{A,\bb}\in\Aut(X^*)\mid A\in U_\infty\Z_d,\ \bb\in\Z_d^\infty\}\]
forms a group, that we will call the \emph{group of affine automorphisms of} $X^*$.

The class of affine automorphisms is a natural generalization of the class of automorphisms induced by affine actions on the ring $\Z_d[[t]]$ of formal power series with coefficients in $\Z_d$. Namely, for each pair of power series $f(t),b(t)\in\Z_d[[t]]$ with $f(t)$ invertible (i.e. the coefficient at $t^0$ is a unit in $\Z_d$) one can define an \emph{affine} transformation $\tau_{f,b}$ of $\Z_d[[t]]$ by
\[\bigl(\tau_{f,b}(g)\bigr)(t)=b(t)+g(t)\cdot f(t).\]
Under the natural identification of $\Z_d[[t]]$ with the boundary of $X^*$, the transformation $\tau_{f,b}$ induces an automorphism of $X^*$, that we will also denote by $\tau_{f,b}$ where the context is clear. Such automorphisms have been studied in the contexts of lamplighter groups~\cite{gns00:automata} and Cayley machines~\cite{silva_s:lamplighter05}. For example, the standard automaton representation of the lamplighter group $\Z_2\wr\Z$ on the binary tree is obtained from the automaton defining $\tau_{1+t,0}$.

The proof of the following proposition is straightforward.

\begin{proposition}
Let $f(t)=a_0+a_1t+a_2t^2+\ldots$ and $b(t)=b_0+b_1t+b_2t^2+\ldots$ be power series in $\Z_d[[t]]$ with $a_0$ being a unit. The automorphism $\tau_{f,b}$ coincides with the affine automorphism $\pi_{A,\bb}$ for $\bb=[b_0,b_1,b_2,\ldots]$ and
\[A=\left[\begin{array}{ccccc}
a_{0}&a_{1}&a_{2}&a_3&\ldots\\
0&a_{0}&a_{1}&a_2&\ldots\\
0&0&a_{0}&a_1&\ldots\\
0&0&0&a_{0}&\ldots\\
\vdots&\vdots&\vdots&\vdots&\ddots
\end{array}\right]\]
whose $i$-th row starts with $i-1$ zeros followed by $a_0,a_1,a_2,\ldots$.
\end{proposition}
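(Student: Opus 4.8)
The plan is to unwind both sides on the boundary $X^\infty\cong\Z_d[[t]]\cong\Z_d^\infty$ and to check that they implement the same affine map of $\Z_d^\infty$; since an automorphism of $X^*$ is completely determined by its action on the boundary, this is enough. Under the identification, a power series $g(t)=\sum_{i\ge 0}g_it^i$ corresponds to the row vector $\bx=[g_0,g_1,g_2,\ldots]$, i.e. $\bx=\sum_{i\ge 1}g_{i-1}\be_i$, with the $0$-indexed coefficients of the series matched to the $1$-indexed coordinates of the vector.

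First I would compute the coefficientwise form of multiplication by $f$. The Cauchy product gives
\[(g\cdot f)(t)=\sum_{n\ge 0}\bigl(\textstyle\sum_{i=0}^{n}g_ia_{n-i}\bigr)t^n;\]
each coefficient is a \emph{finite} sum, so the operation is everywhere defined and there is no convergence issue. Reading this in coordinates, the coefficient of $t^{j-1}$ in $g\cdot f$ equals $\sum_{i=1}^{j}g_{i-1}a_{j-i}$, which is precisely the $j$-th entry of the product $\bx\cdot A$ once we put $A_{ij}=a_{j-i}$ for $j\ge i$ and $A_{ij}=0$ for $j<i$ — that is, the Toeplitz matrix in the statement, whose $i$-th row consists of $i-1$ zeros followed by $a_0,a_1,a_2,\ldots$. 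Next I would note that $A\in U_\infty\Z_d$: it is upper triangular by construction, and its diagonal entries are all equal to $a_0$, a unit by hypothesis.

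Adding $b(t)=\sum_{i\ge 0}b_it^i$ to a power series corresponds to adding the vector $\bb=[b_0,b_1,b_2,\ldots]$ coordinatewise. Combining the two steps, for every $g\in\Z_d[[t]]$ with image $\bx\in\Z_d^\infty$ we obtain that $\tau_{f,b}(g)=b+g\cdot f$ corresponds to $\bb+\bx\cdot A=\pi_{A,\bb}(\bx)$, so $\tau_{f,b}$ and $\pi_{A,\bb}$ act identically on the boundary and hence coincide as automorphisms of $X^*$. (Alternatively, one may invoke Proposition~\ref{prop:affine_basic}(a) to phrase the conclusion in terms of the pair $(A,\bb)$.)

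There is essentially no hard step here; the only thing requiring care is the bookkeeping between the $0$-indexed coefficients of the power series and the $1$-indexed rows and columns of $A$ — equivalently, the off-by-one shift in $\bx=\sum_{i\ge 1}g_{i-1}\be_i$ — together with the observation that each entry of $\bx\cdot A$ is a finite sum, so that everything is well-defined purely algebraically without appealing to any topology on $\Z_d^\infty$.
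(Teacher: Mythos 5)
Your argument is correct: the paper itself omits the proof, declaring it straightforward, and your direct verification --- identifying $\Z_d[[t]]$ with $\Z_d^\infty$, matching the Cauchy product coefficients $\sum_{i=1}^{j}g_{i-1}a_{j-i}$ with the $j$-th entry of $\bx\cdot A$ for the Toeplitz matrix $A_{ij}=a_{j-i}$, and noting that addition of $b(t)$ corresponds to adding $\bb$ --- is exactly the intended routine check, with the off-by-one bookkeeping handled correctly.
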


Similarly to affine automorphisms of $X^*$, the set
\[\Aff_{[[t]]}(X^*)=\{\tau_{f,b}\in\Aut(X^*)\mid f,b\in\Z_d[[t]]\}\]
also forms a group (a proper subgroup of $\Aff(X^*)$) that we will call the \emph{group of $\Z_d[[t]]$-affine automorphisms of} $X^*$.

To address the question on when an affine automorphism is finite state, we set up the notation for the shift map defined for matrices and vectors.
\begin{definition}
\begin{itemize}
\item The \emph{shift} of a vector $\bb=[b_{i}]_{i\geq 1}$ is the vector $\sigma(\bb)=[b_{i}]_{i\geq 2}$ obtained from $\bb$ by removing the first entry.
\item The \emph{shift} of a matrix $A=[a_{ij}]_{i,j\geq 1}$ is the matrix $\sigma(A)=[a_{ij}]_{i,j\geq 2}$ obtained from $A$ by removing the first row and the first column.
\end{itemize}
\end{definition}

We will say that a vector $\bb$ (resp., a matrix $A$) is \emph{eventually periodic} if $\{\sigma^n(\bb),n\geq0\}$ (resp., $\{\sigma^n(A),n\geq0\}$) is finite.

\begin{proposition}
\label{prop:state}
Let $\pi_{A,\bb}$ be an affine automorphism, and let $x\in X=\Z_d$ be a letter. Then the state of $\pi_{A,\bb}$ at vertex $x$ of $X^*$ is
\begin{equation}
\label{eqn:state}
\pi_{A,\bb}|_x=\pi_{\sigma(A),x\cdot\sigma([1,0,0,\ldots]A)+\sigma(\bb)}.
\end{equation}
\end{proposition}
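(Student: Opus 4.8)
The plan is to compute the action of $\pi_{A,\bb}$ on words starting with a fixed first letter $x$ and read off both the first-letter permutation and the state $\pi_{A,\bb}|_x$ directly from the affine formula. Write an arbitrary element of the boundary as $\bx = [x_1, x_2, x_3, \ldots] \in \Z_d^\infty$, so that $\pi_{A,\bb}(\bx) = \bb + \bx \cdot A$. The first coordinate of the output is $b_1 + \sum_{j\geq 1} x_j a_{j1} = b_1 + x_1 a_{11}$, since $A$ is upper triangular (so $a_{j1}=0$ for $j\geq 2$). In particular the first letter of the output depends only on $x_1 = x$, confirming that $\pi_{A,\bb}$ does induce a genuine state at the vertex $x$ (as we already know), and that the first-level permutation is $x \mapsto b_1 + a_{11}x$.

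Next I would isolate the tail. Decompose $\bx = x_1 \be_1 + \bx'$ where $\bx' = [0, x_2, x_3, \ldots]$, and note that the tail coordinates of $\pi_{A,\bb}(\bx)$, i.e. coordinates $2, 3, \ldots$, are obtained by applying the shift $\sigma$ to the full output vector. So $\sigma\bigl(\bb + \bx\cdot A\bigr) = \sigma(\bb) + \sigma(x_1 \be_1 \cdot A) + \sigma(\bx' \cdot A)$. For the middle term, $\be_1 \cdot A$ is exactly the first row of $A$, which in the notation of the statement is $[1,0,0,\ldots]A$, so $\sigma(x_1 \be_1 \cdot A) = x_1 \cdot \sigma\bigl([1,0,0,\ldots]A\bigr)$; with $x_1 = x$ this is the vector appearing in the claimed formula. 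For the last term, because $A$ is upper triangular, the contribution of $\bx'$ (whose first entry is $0$) to coordinates $\geq 2$ of $\bx' \cdot A$ depends only on $x_2, x_3, \ldots$ and is governed precisely by the submatrix $\sigma(A)$: identifying $\bx'$ with $\sigma(\bx') = [x_2, x_3, \ldots]$ via the identification of $X^*$ below the vertex $x$ with $X^*$ itself, we get $\sigma(\bx' \cdot A) = \sigma(\bx') \cdot \sigma(A)$. The key point making this clean is again upper-triangularity: deleting the first row and column of $A$ does not lose any information needed to compute the tail from the tail.

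Putting these together, the state of $\pi_{A,\bb}$ at $x$ sends the word $[x_2,x_3,\ldots]$ (the continuation below $x$) to
\[
\sigma(\bb) + x\cdot\sigma\bigl([1,0,0,\ldots]A\bigr) + [x_2,x_3,\ldots]\cdot\sigma(A),
\]
which is exactly the affine transformation $\pi_{\sigma(A),\, x\cdot\sigma([1,0,0,\ldots]A)+\sigma(\bb)}$, as claimed. It remains only to check that $\sigma(A) \in U_\infty\Z_d$, which is immediate since a principal submatrix of an upper triangular matrix with unit diagonal is again upper triangular with unit diagonal, and that the new translation vector lies in $\Z_d^\infty$, which is clear.

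The only real subtlety — the part I would write out most carefully — is the bookkeeping identifying coordinate $i+1$ of the original vector space with coordinate $i$ of the vector space attached to the subtree rooted at $x$, and verifying that under this identification the linear part genuinely restricts to $\sigma(A)$ rather than to some larger block; this is where upper-triangularity of $A$ is doing the essential work. Everything else is a routine expansion of $\bb + \bx\cdot A$ by linearity.
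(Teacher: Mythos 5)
Your proposal is correct and follows essentially the same route as the paper: a direct expansion of $\bb+\bx\cdot A$ in coordinates, splitting off the first entry (which depends only on $x_1$ by upper-triangularity) and recognizing the tail as $\sigma(\bb)+x\cdot\sigma([1,0,0,\ldots]A)+[x_2,x_3,\ldots]\cdot\sigma(A)$. The paper carries this out in a single displayed computation; your version merely makes the decomposition $\bx=x_1\be_1+\bx'$ explicit, which is the same bookkeeping.
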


\begin{proof}
Let
\[A=\left[\begin{array}{cccc}
a_{11}&a_{12}&a_{13}&\ldots\\
0&a_{22}&a_{23}&\ldots\\
0&0&a_{33}&\ldots\\
\vdots&\vdots&\vdots&\ddots
\end{array}\right],\qquad
\bb=[b_1,b_2,b_3,\ldots]
\]
be the matrix and the vector defining $\pi_{A,\bb}$. Let also $\bx=[x_1,x_2,x_3,\ldots]$ be an arbitrary point in the boundary of $X^*$. Then
\begin{multline*}
\pi_{A,\bb}(\bx)=\bb+\bx\cdot A=[b_1+a_{11}x_1,\ b_2+a_{12}x_1+a_{22}x_2,\ b_3+a_{13}x_1+a_{23}x_2+a_{33}x_3,\ldots]=\\
[b_1+a_{11}x_1,\  [b_2,b_3,\ldots]+x_1\cdot[a_{12},a_{13},\ldots]+[x_2,x_3,\ldots]\cdot\sigma(A)].
\end{multline*}

Since the image of $[x_2,x_3,\ldots]$ under $\pi_{A,\bb}|_{x_1}$ is obtained by erasing the first entry in the above vector, we immediately obtain~\eqref{eqn:state}.
\end{proof}

\begin{theorem}
The automorphism $\pi_{A,\bb}$ is finite state if and only if matrix $A$, its rows, and vector $\bb$ are eventually periodic.
\end{theorem}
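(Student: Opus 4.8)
The plan is first to compute, once and for all, the complete list of states of $\pi_{A,\bb}$ from the recursion in Proposition~\ref{prop:state}, and then to read off eventual periodicity from finiteness of that list using Proposition~\ref{prop:affine_basic}(a) (distinct pairs (matrix, vector) give distinct affine automorphisms). Writing $R_k$ for the $k$-th row of $A$, I would prove by induction on $n$ that for every word $v=x_1x_2\ldots x_n\in X^n$,
\[
\pi_{A,\bb}|_{v}=\pi_{\sigma^n(A),\,\mathbf c_v},\qquad \mathbf c_v=\sigma^n(\bb)+\sum_{k=1}^{n}x_k\,\sigma^n(R_k);
\]
the inductive step only uses that the first row of $\sigma^n(A)$ is $\sigma^n(R_{n+1})$, so one more application of~\eqref{eqn:state} adjoins the term $x_{n+1}\sigma^{n+1}(R_{n+1})$ and shifts the rest. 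Hence the set of states of $\pi_{A,\bb}$ is $\{\pi_{\sigma^n(A),\,\mathbf c_v}:n\ge0,\ v\in X^n\}$, and by Proposition~\ref{prop:affine_basic}(a) it is finite if and only if both $\{\sigma^n(A):n\ge0\}$ and $\{\mathbf c_v:n\ge0,\ v\in X^n\}$ are finite. The first condition is exactly eventual periodicity of $A$ as a matrix, so everything comes down to understanding the vectors $\mathbf c_v$.

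For the forward direction, suppose $\pi_{A,\bb}$ is finite state. Then $A$ is eventually periodic. Taking $v=00\cdots0$ gives $\mathbf c_v=\sigma^n(\bb)$, so $\{\sigma^n(\bb):n\ge0\}$ is finite, i.e. $\bb$ is eventually periodic. Fixing $k$ and taking, for each $n\ge k$, the word with $x_k=1$ and all other letters $0$ gives $\mathbf c_v=\sigma^n(\bb)+\sigma^n(R_k)$, so $\sigma^n(R_k)$ lies in the (finite) difference of two finite sets; thus $\{\sigma^n(R_k):n\ge0\}$ is finite and $R_k$ is eventually periodic.

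The substance of the proof is the converse. Assume $A$, all its rows, and $\bb$ are eventually periodic, and fix $m_0\ge0$, $p\ge1$ with $a_{ij}=a_{i+p,j+p}$ for all $i,j>m_0$. I would first record the identity
\[
\sigma^{m}(R_{k+\ell p})=\sigma^{\,m-\ell p}(R_{k})\qquad\text{for all }k>m_0,\ \ell\ge0,\ m\ge m_0+\ell p,
\]
obtained by iterating the case $\ell=1$, which is a one-line consequence of the periodicity relation. Because in $\mathbf c_v$ every term $\sigma^n(R_k)$ has $n\ge k$, the hypothesis $n\ge m_0+\ell p$ is automatic whenever the identity is invoked (since then $n\ge k'+\ell p>m_0+\ell p$), and the sum defining $\mathbf c_v$ splits as
\[
\mathbf c_v=\sigma^n(\bb)+\sum_{k=1}^{\min(n,\,m_0+p)}x_k\,\sigma^n(R_k)+\sum_{k'=m_0+1}^{m_0+p}\ \sum_{\substack{\ell\ge1\\ k'+\ell p\le n}}x_{k'+\ell p}\,\sigma^{\,n-\ell p}(R_{k'}).
\]
The first term ranges over a finite set ($\bb$ eventually periodic); the middle sum has a bounded number of terms, each from the finite set $\{\sigma^m(R_k):m\ge0\}$ ($R_k$ eventually periodic), so it ranges over a finite set; and in the last sum, for each of the $p$ fixed values of $k'$ the vectors $\sigma^{\,n-\ell p}(R_{k'})$ all belong to one finite set $\{V_1,\ldots,V_s\}$, whence the inner sum equals $\sum_{i=1}^{s}c_iV_i$ with $c_i\in\Z_d$ and takes at most $d^{s}$ values. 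Thus $\mathbf c_v$ ranges over a finite set, and together with finiteness of $\{\sigma^n(A)\}$ this shows $\pi_{A,\bb}$ is finite state.

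The main obstacle is precisely this last step: a priori $\mathbf c_v$ is an unbounded sum of $n$ vectors, and neither hypothesis alone controls it — eventual periodicity of the rows says nothing uniform (the onset and period of $R_k$ may grow with $k$), and eventual periodicity of the matrix does not force periodicity of any single row. The displayed identity is exactly where the two hypotheses interact: matrix periodicity collapses the tail $k>m_0+p$ into $p$ residue classes, and then row periodicity makes each class contribute only finitely many vectors.
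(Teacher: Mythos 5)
Your proof is correct, and for the ``only if'' direction it is essentially the paper's argument: you specialize your general state formula to the words $0^n$ (recovering $\sigma^n(A)$ and $\sigma^n(\bb)$) and to words with a single letter $1$ (recovering $\sigma^n(R_k)$ up to a known summand), exactly as the paper does with the vertices $0^n$ and $0^{j-1}10^n$, invoking Proposition~\ref{prop:affine_basic}(a) to separate the finite sets. Where you genuinely diverge is the ``if'' direction: the paper dismisses it as following ``immediately'' from Proposition~\ref{prop:state}, whereas you correctly identify that it is not immediate --- the vector attached to the state at a depth-$n$ vertex is a sum of $n$ shifted rows, and neither eventual periodicity of the matrix alone nor of the rows alone bounds the set of such sums. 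Your closed-form $\mathbf c_v=\sigma^n(\bb)+\sum_{k=1}^{n}x_k\sigma^n(R_k)$, together with the identity $\sigma^{m}(R_{k+\ell p})=\sigma^{m-\ell p}(R_{k})$ extracted from matrix periodicity, collapses the unbounded sum into a $\Z_d$-linear combination of a fixed finite set of vectors; this is a real argument that the paper omits, and it is the right one (I checked the range conditions $n\ge k'+\ell p>m_0+\ell p$ that make the identity applicable, and they hold). In short: same skeleton as the paper where the paper gives details, plus a complete and needed proof of the direction the paper leaves to the reader.
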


\begin{proof}
The ``if'' direction follows immediately from Proposition~\ref{prop:state}. Thus we concentrate on the ``only if'' direction. Suppose an automorphism $\pi_{A,\bb}$ is finite state. Then, in particular, the set $\{\pi_{A,\bb}|_{0^n},\ n\geq0\}$ is finite. By Proposition~\ref{prop:state} we have
\[\{\pi_{A,\bb}|_{0^n},\ n\geq0\}=\{\pi_{\sigma^n(A),\sigma^n(\bb)},\ n\geq0\}.\]
Therefore by Lemma~\ref{prop:affine_basic}(a) we must have
\begin{equation}
\label{eqn:Ab_finite}
\{\sigma^n(A),\ n\geq0\},\quad \{\sigma^n(\bb),\ n\geq0\}\qquad\text{are finite}
\end{equation}
and both $A$ and $\bb$ are eventually periodic.

To prove that the $j$-th row $\ba_j$ of $A$ is also eventually periodic we note that the set
\[\{\pi_{A,\bb}|_{0^{j-1}10^n},\ j\geq1,n\geq0\}=\{\pi_{\sigma^{j-1}(A),\sigma^{j-1}(\bb)}|_{10^n},\ j\geq1,n\geq0\}=\{\pi_{\sigma^{j+n}(A),\sigma^{1+n}(\ba_{j})+\sigma^{1+n}(\bb)}\}\]
must be finite (where in the last equality we used Proposition~\ref{prop:state}). Since we have proved in~\eqref{eqn:Ab_finite} that $A$ and $\bb$ are eventually periodic, we must have that $\{\sigma^{1+n}(\ba_{j}),\ n\geq1\}$ is also finite and thus $\ba_j$ is eventually periodic.
\end{proof}

As a partial case we obtain a known results regarding the automorphisms induced by the affine transformations of $\Z_d[[t]]$. We will use below the following natural notation: for $c(t)=c_0+c_1t+c_2t^2+\ldots\in\Z_d[[t]]$ we define the \emph{shift} of $c(t)$ by \[\sigma(c(t))=c_1+c_2t+c_3t^2+\ldots=\frac{c(t)-c_0}{t}.\]

\begin{corollary}
Let $f(t),b(t)\in\Z_d[[t]]$ be arbitrary power series with $f(t)$ invertible. Then
\begin{itemize}
\item[(a)] the automorphism $\tau_{f,b}$ is finite state if and only if both $f(t)$ and $b(t)$ are rational power series.
\item[(b)] for each $x\in X$ the state of $\tau_{f,b}$ at $x$ is computed as
\[\tau_{f,b}|_x=\tau_{f,x\sigma(f)+\sigma(b)}.\]
\end{itemize}
\end{corollary}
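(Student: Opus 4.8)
The plan is to deduce both statements from Proposition~\ref{prop:state} together with the finite-state criterion just proved, by exploiting the very special (Toeplitz) shape of the matrix $A$ attached to a power series $f$. Recall that $\tau_{f,b}=\pi_{A,\bb}$, where $\bb=[b_0,b_1,b_2,\ldots]$ and $A$ is the upper triangular matrix whose $i$-th row is $[\,\underbrace{0,\ldots,0}_{i-1},a_0,a_1,a_2,\ldots\,]$. The key elementary observation is that such a matrix is shift-invariant: $\sigma(A)=A$, because deleting the first row and first column of $A$ returns a matrix of exactly the same Toeplitz form with the same entries $a_0,a_1,a_2,\ldots$.

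For part (b) I would simply substitute into Proposition~\ref{prop:state}. We have $[1,0,0,\ldots]\cdot A$ equal to the first row of $A$, namely $[a_0,a_1,a_2,\ldots]$, so $\sigma([1,0,0,\ldots]A)=[a_1,a_2,a_3,\ldots]$, which is precisely the coefficient vector of $\sigma(f)$. Likewise $\sigma(\bb)=[b_1,b_2,b_3,\ldots]$ is the coefficient vector of $\sigma(b)$. Plugging these and $\sigma(A)=A$ into~\eqref{eqn:state} gives $\pi_{A,\bb}|_x=\pi_{A,\,x\sigma(f)+\sigma(b)}$, i.e. $\tau_{f,b}|_x=\tau_{f,\,x\sigma(f)+\sigma(b)}$, which is the claim.

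For part (a) I would first record the standard fact that a power series $c(t)\in\Z_d[[t]]$ is rational if and only if its coefficient sequence is eventually periodic, equivalently $\{\sigma^n(c(t)):n\geq 0\}$ is finite: an eventually periodic sequence is visibly the coefficient sequence of some $p(t)+t^{N}q(t)/(1-t^{L})$, and conversely the coefficients of $p(t)/q(t)$ (with $q(0)$ a unit) satisfy a linear recursion of order $\deg q$, so the vector of $\deg q$ consecutive coefficients takes only finitely many values in the finite set $\Z_d^{\deg q}$ and the sequence must be eventually periodic. Granting this, I would apply the finite-state criterion to $\pi_{A,\bb}$. Here $\{\sigma^n(A):n\geq 0\}=\{A\}$ is automatically finite by the Toeplitz observation; the set of shifts of the rows of $A$ is, after discarding leading zeros, exactly the set of tails $\{[a_n,a_{n+1},\ldots]:n\geq 0\}$ of the coefficient sequence of $f$, so it is finite iff $f$ is rational; and $\{\sigma^n(\bb):n\geq 0\}$ is finite iff $b$ is rational. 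Hence $\tau_{f,b}=\pi_{A,\bb}$ is finite state iff both $f$ and $b$ are rational.

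I do not expect a serious obstacle: the only point requiring care is the equivalence ``rational $\Leftrightarrow$ eventually periodic coefficients'' over the ring $\Z_d$ rather than over a field, but as indicated this follows from the finiteness of $\Z_d$ (finitely many recurrence states) in one direction and an explicit geometric-series expression in the other. Everything else is a direct specialization of the matrix statements already established.
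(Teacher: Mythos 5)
Your proof is correct and follows exactly the route the paper intends: the corollary is stated there without proof as a direct specialization of Proposition~\ref{prop:state} and the finite-state theorem to the Toeplitz matrix $A$ with $\sigma(A)=A$, which is precisely what you carry out. The only detail the paper leaves implicit and you rightly supply is the equivalence ``rational $\Leftrightarrow$ eventually periodic coefficients'' over the finite ring $\Z_d$, and your argument for it (finitely many recurrence states in one direction, an explicit geometric-series expression in the other) is sound.
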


\begin{corollary}
Let $\pi_{A,b}$ be an affine automorphism. The period of the matrix $A$ is a factor of the length of the shortest cycle in the automaton defining $\pi_{A,b}$. In particular, an affine automorphism $\pi_{A,b}$ is induced by an affine transformation of $\Z_d[[t]]$ if and only if there is a loop of length one in the automaton defining $\pi_{A,b}$.
\end{corollary}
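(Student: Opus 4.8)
The plan is to track how the matrix component of an affine automorphism $\pi_{A,b}$ transforms along an arbitrary path in the tree, and to compare the period of $A$ under shifting with the cycle structure of the defining automaton. First I would observe, via repeated application of Proposition~\ref{prop:state}, that for any word $v=x_1x_2\cdots x_n\in X^*$ the state $\pi_{A,b}|_v$ is again an affine automorphism whose matrix component is exactly $\sigma^n(A)$; indeed, each letter we descend shifts the matrix by one, independently of which letter $x_i$ we choose (the letter only affects the vector component). Consequently, if $q_0=\pi_{A,b}|_u$ and $q_1=\pi_{A,b}|_w$ are two states of the automaton defining $\pi_{A,b}$ sitting at vertices $u,w$ with $|u|=k$ and $|w|=k+\ell$, then their matrix components are $\sigma^k(A)$ and $\sigma^{k+\ell}(A)$ respectively. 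In particular, whenever the automaton has a cycle of length $\ell$ — say a state $q$ that returns to itself after reading $\ell$ letters — the corresponding matrices satisfy $\sigma^k(A)=\sigma^{k+\ell}(A)$ for the depth $k$ at which $q$ first appears, hence for all depths $\geq k$. This shows that the eventual period of $A$ divides $\ell$; taking $\ell$ to be the length of the shortest cycle gives the first assertion.

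For the second assertion I would specialize $\ell=1$. A loop of length one at a state $q$ means $q$ has a section equal to itself at some letter, so its matrix component is fixed by a single shift: $\sigma(M)=M$ where $M$ is that matrix. A shift-invariant upper triangular matrix over $\Z_d$ with units on the diagonal is precisely a matrix constant along diagonals, i.e. of the form appearing in the Proposition above with rows $a_0,a_1,a_2,\ldots$, which is exactly the matrix associated to multiplication by the power series $f(t)=a_0+a_1t+\cdots$. Since every state of $\pi_{A,b}$ has matrix component $\sigma^n(A)$ for some $n$, once $\sigma^n(A)$ is shift-invariant for one $n$ it is shift-invariant and equal to that $M$ for all larger $n$; but $\pi_{A,b}$ itself is a section of $\pi_{A,b}$ (at the root), and I need $A$ itself — not merely an eventual shift — to have the Toeplitz form. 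This is handled by noting that if \emph{any} state on the all-zeros ray, or indeed the root state, lies on a length-one loop then $\sigma(A)=A$ directly; conversely, if $\pi_{A,b}$ is $\Z_d[[t]]$-affine then by the previous Proposition its matrix $A$ already has the Toeplitz form, so $\sigma(A)=A$, meaning $\pi_{A,b}|_0$ has the same matrix as $\pi_{A,b}$, and iterating shows the all-zeros ray passes through only finitely many states with a self-loop among them.

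The main obstacle I anticipate is the bookkeeping in the last step: the first statement only controls the \emph{eventual} period of $A$, so to get the clean ``if and only if'' for being $\Z_d[[t]]$-affine I must be careful to argue at the level of $A$ itself rather than a shift of it. The cleanest route is to prove the forward direction (loop of length one $\Rightarrow$ $\Z_d[[t]]$-affine) by locating the self-loop, reading off a shift-fixed matrix $M$, and then using that $M$ must coincide with $A$ because $A = \sigma^m(A)$ can be forced once we know the automaton is finite — alternatively, and more robustly, one checks directly that a self-loop at the state $\pi_{A,b}|_{0^k}$ forces $\sigma^{k}(A) = \sigma^{k+1}(A)$, hence $\sigma^j(A)$ is Toeplitz for $j \ge k$; but then each row of $A$ is eventually constant in the required pattern, and since $A$ is upper triangular the finitely many top-left entries are already determined by the diagonal Toeplitz structure they feed into, giving that $A$ is globally Toeplitz. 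The reverse direction is immediate from the preceding Proposition together with Corollary (b), which gives $\tau_{f,b}|_0 = \tau_{f,\sigma(b)}$, exhibiting a length-one loop at the state with first coordinate of $b$ discarded.
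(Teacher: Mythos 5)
Your proof of the first assertion is essentially the paper's: both rest on the observation, coming from Proposition~\ref{prop:state}, that the matrix component of the state $\pi_{A,\bb}|_v$ is $\sigma^{|v|}(A)$ regardless of which letters $v$ contains, so that a coincidence of states $g|_u=g|_{uv}$ forces $\sigma^{|u|}(A)=\sigma^{|u|+|v|}(A)$ and hence the (eventual) period of $A$ divides $|v|$. That part is correct and matches the paper.

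The second assertion is where the trouble is, and to be fair the paper itself dismisses it with ``follows immediately,'' so you are not being measured against a complete argument. You correctly isolate the real difficulty --- a self-loop only yields $\sigma^{k}(A)=\sigma^{k+1}(A)$ for some $k$, i.e.\ that some \emph{shift} of $A$ is Toeplitz, not that $A$ itself is --- but both of your attempted repairs fail. The claim that ``the finitely many top-left entries are already determined by the diagonal Toeplitz structure they feed into, giving that $A$ is globally Toeplitz'' is false: over $\Z_2$ take $A$ to be the identity with one extra $1$ in position $(1,2)$. Then $\sigma(A)=I$ is Toeplitz but $A$ is not; $\pi_{A,0}$ is a finitary automorphism whose automaton contains the trivial state together with its self-loop, yet $\pi_{A,0}$ is not of the form $\tau_{f,b}$ by Proposition~\ref{prop:affine_basic}(a). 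Upper-triangularity places no constraint whatsoever on the top-left corner. (Indeed, every finitary affine automorphism has the trivial state, hence a length-one loop, in its automaton, so the ``if'' direction read literally would make every finitary affine automorphism $\Z_d[[t]]$-affine.) Your converse direction is also broken: $\tau_{f,b}|_0=\tau_{f,\sigma(b)}$ is an edge from the state $\tau_{f,b}$ to the state $\tau_{f,\sigma(b)}$, which is a loop only when $\sigma(b)=b$; for $f=1$ and $b=(0,1)^\infty$ the automaton of $\tau_{1,b}$ is a two-state cycle with no self-loop at all, even though $\tau_{1,b}$ is manifestly $\Z_2[[t]]$-affine. These examples show that the ``in particular'' clause cannot be established as literally stated; it holds only after replacing $(A,\bb)$ by a suitable shift (equivalently, for states of $\pi_{A,\bb}$ sufficiently deep in the tree), and an honest proof has to say so rather than force $A$ itself into Toeplitz form.
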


\begin{proof}
Follows immediately from Proposition~\ref{prop:state}. If $g=\pi_{A,b}$ and $g|_u=g|_{uv}$ are the two states in the automaton defining $g$ that correspond to the shortest cycle of length $|v|$, then $\pi_{\sigma^{|u|}(A),*}=g|_u=g|_v=\pi_{\sigma^{|uv|}(A),*}$. Therefore $\sigma^{|uv|}(A)=\sigma^{|u|}(A)$ by Proposition~\ref{prop:affine_basic}(a).
\end{proof}

Let $I$ denote the infinite identity matrix over $\Z_d$. Then the group $\SHAut(X^*)$ of spherically homogeneous automorphisms of $X^*$ naturally contains (and in the case $d=2$ is equal to) the group $\Aff_{I}(X^*)=\{\pi_{I,\bb}, \bb\in\Z_d^\infty\}$ of affine shifts. Elements of $\Aff_{I}(X^*)$ can be parameterized by infinite-dimensional vectors from $\Z_d^\infty$, where the $i$-th component defines the action of an automorphism on the $i$-th letter of an input word.

\begin{proposition}
\label{prop:metab_general}
The group $\Aff_{I}(X^*)$ is a normal subgroup of $\Aff(X^*)$. In particular, for each abelian subgroup $U$ of $\Aff(X^*)$, the group $\langle U, \Aff_{I}(X^*)\rangle$ is metabelian.
\end{proposition}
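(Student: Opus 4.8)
The plan is to establish the two assertions in sequence, using Proposition~\ref{prop:affine_basic} throughout. First I would show that $\Aff_I(X^*)$ is normal in $\Aff(X^*)$ by a direct conjugation computation. Take any $\pi_{A,\ba}\in\Aff(X^*)$ and any $\pi_{I,\bb}\in\Aff_I(X^*)$, and compute $\pi_{A,\ba}^{-1}\,\pi_{I,\bb}\,\pi_{A,\ba}$ using parts (b) and (c) of Proposition~\ref{prop:affine_basic}. Since the matrix part multiplies as $A^{-1}\cdot I\cdot A=I$, the conjugate has trivial matrix part, i.e.\ it is again of the form $\pi_{I,\bc}$ for some vector $\bc$ (explicitly $\bc=-\ba A^{-1}+\bb A^{-1}+\ba=\ba(I-A^{-1})+\bb A^{-1}$, though the exact value is not needed). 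Hence $\Aff_I(X^*)\trianglelefteq\Aff(X^*)$. It is also worth noting, as an immediate consequence of part (b), that the map $\pi_{A,\ba}\mapsto A$ is a homomorphism from $\Aff(X^*)$ onto the group $U_\infty\Z_d$ of upper unitriangular matrices, whose kernel is precisely $\Aff_I(X^*)$; this reformulation makes the normality transparent and is convenient for the second part.

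For the second assertion, let $U$ be an abelian subgroup of $\Aff(X^*)$ and set $H=\langle U,\Aff_I(X^*)\rangle$. Since $\Aff_I(X^*)$ is normal in $\Aff(X^*)$, it is normal in $H$, and $H/\Aff_I(X^*)$ is generated by the image of $U$, hence abelian (the image of $U$ is a homomorphic image of an abelian group, and together with the trivial image of $\Aff_I(X^*)$ it generates the whole quotient). It remains only to observe that $\Aff_I(X^*)$ itself is abelian: by Proposition~\ref{prop:affine_basic}(b), $\pi_{I,\bb}\cdot\pi_{I,\bb'}=\pi_{I,\bb+\bb'}=\pi_{I,\bb'+\bb}=\pi_{I,\bb'}\cdot\pi_{I,\bb}$, so $\Aff_I(X^*)\cong(\Z_d^\infty,+)$. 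Thus $H$ has an abelian normal subgroup with abelian quotient, i.e.\ $H$ is metabelian, which is exactly the claim. (If one prefers, one can phrase this via the commutator subgroup: $[H,H]\leq\Aff_I(X^*)$ because the quotient is abelian, and $[H,H]$ is then abelian because $\Aff_I(X^*)$ is.)

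There is essentially no serious obstacle here; the statement is a soft consequence of the multiplication rule in Proposition~\ref{prop:affine_basic}. The only point requiring a moment's care is making sure the conjugation identity is applied with the correct handedness, given the paper's convention~\eqref{eqn_conv} that in a product $gh$ the factor $g$ acts first — so one must be careful that $\pi_{A,\bb}\cdot\pi_{A',\bb'}=\pi_{AA',\bb A'+\bb'}$ is used in the stated order and not its mirror image. Once that bookkeeping is done correctly, the vanishing of the matrix part of the conjugate is immediate, and the metabelian conclusion follows formally.
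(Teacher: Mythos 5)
Your proof is correct and follows essentially the same route as the paper: a direct conjugation computation using Proposition~\ref{prop:affine_basic}(b),(c) to see that the matrix part of $\pi_{A,\ba}^{-1}\pi_{I,\bb}\pi_{A,\ba}$ is $A^{-1}IA=I$, with the metabelian claim then following from the abelian-by-abelian structure (a step the paper leaves implicit). One small slip: your parenthetical explicit value $\bc=\ba(I-A^{-1})+\bb A^{-1}$ for the conjugated translation vector is incorrect — with the paper's conventions the computation gives $\pi_{I,\bb}^{\pi_{A,\ba}}=\pi_{I,\bb A}$, with no dependence on $\ba$ at all — but since, as you note, only the triviality of the matrix part is used, this does not affect the validity of the argument.
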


\begin{proof}
Let $\pi_{I,\bb}\in\Aff_{I}(X^*)$ and $\pi_{A,\bc}\in\Aff(X^*)$ be arbitrary elements. Using Proposition~\ref{prop:affine_basic} we calculate:
\begin{multline*}
\pi_{I,\bb}^{\pi_{A,\bc}}=(\pi_{A,\bc}^{-1}\cdot\pi_{I,\bb})\cdot\pi_{A,\bc}=(\pi_{A^{-1},-\bc A^{-1}}\cdot\pi_{I,\bb})\cdot\pi_{A,\bc}\\
=\pi_{A^{-1},-\bc A^{-1}I+\bb}\cdot\pi_{A,\bc}=\pi_{I,(-\bc A^{-1}+\bb)A+\bc}=\pi_{I,\bb A}\in\Aff_I(X^*).
\end{multline*}
\end{proof}

\begin{corollary}
The group $\Aff_{[[t]]}(X^*)$ is metabelian.
\end{corollary}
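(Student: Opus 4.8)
The plan is to realize $\Aff_{[[t]]}(X^*)$ as a subgroup of a group of the shape $\langle U,\Aff_{I}(X^*)\rangle$ with $U$ abelian, and then to quote Proposition~\ref{prop:metab_general} together with the elementary fact that a subgroup of a metabelian group is metabelian (the second derived subgroup can only shrink under passing to subgroups). Since $\Aff_{[[t]]}(X^*)$ has already been observed to be a group, this suffices.

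First I would isolate the abelian ``multiplication'' subgroup
\[U=\{\tau_{f,0}\in\Aut(X^*)\mid f\in\Z_d[[t]]\text{ invertible}\}.\]
It is contained in $\Aff_{[[t]]}(X^*)$, hence in $\Aff(X^*)$; it is a subgroup because $\tau_{f,0}^{-1}=\tau_{f^{-1},0}$ and $\tau_{f,0}\cdot\tau_{g,0}=\tau_{fg,0}$; and it is abelian by commutativity of multiplication in $\Z_d[[t]]$ (equivalently, via the Toeplitz identification $\tau_{f,0}=\pi_{A,\mathbf 0}$ of the proposition identifying $\tau_{f,b}$ with an affine automorphism, together with Proposition~\ref{prop:affine_basic}(b)).

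Next I would observe that every $\tau_{f,b}$ splits as a multiplication followed by a translation. Keeping track of the paper's right-action convention (in $gh$ the automorphism $g$ acts first), applying $\tau_{f,0}$ and then $\tau_{1,b}$ sends $g(t)$ to $b(t)+g(t)f(t)$, that is,
\[\tau_{f,b}=\tau_{f,0}\cdot\tau_{1,b},\qquad \tau_{1,b}=\pi_{I,[b_0,b_1,b_2,\ldots]}\in\Aff_{I}(X^*).\]
Hence $\Aff_{[[t]]}(X^*)\subseteq\langle U,\Aff_{I}(X^*)\rangle$. (In fact one gets equality, since by Proposition~\ref{prop:affine_basic}(b) any product of multiplications and affine shifts is again of the form $\tau_{f,b}$, but this is not needed for the statement.)

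Finally, Proposition~\ref{prop:metab_general} applied to the abelian subgroup $U\leq\Aff(X^*)$ shows that $\langle U,\Aff_{I}(X^*)\rangle$ is metabelian, and therefore so is its subgroup $\Aff_{[[t]]}(X^*)$. The only point demanding any attention is the bookkeeping of the order of composition in the factorization $\tau_{f,b}=\tau_{f,0}\cdot\tau_{1,b}$ under the convention that in $gh$ the map $g$ acts first; everything else is a direct appeal to results already established.
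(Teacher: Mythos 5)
Your proof is correct and follows essentially the same route as the paper: take the abelian multiplicative subgroup $U=\{\tau_{f,0}\}$, factor $\tau_{f,b}$ as a multiplication followed by a translation lying in $\Aff_{I}(X^*)$, and invoke Proposition~\ref{prop:metab_general} plus the fact that subgroups of metabelian groups are metabelian. Your bookkeeping is in fact slightly more careful than the paper's, which writes the translation factor as $\tau_{0,g}$ where it should be $\tau_{1,g}$ as in your $\tau_{1,b}$.
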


\begin{proof}
It is enough to apply Proposition~\ref{prop:metab_general} in the case when $U=\{\tau_{f,0}\mid f\in\Z_d[[t]]\}$. Since $\tau_{f,g}=\tau_{f,0}\cdot\tau_{0,g}$, we get that $\Aff_{[[t]]}(X^*)$ is a subgroup of a metabelian group $\langle U, \Aff_{I}(X^*)\rangle$, and thus is metabelian.
\end{proof}

\begin{lemma}
\label{lem:centralizer}
The centralizer $C$ of the group $\Aff_{I}(X^*)$ in $\Aut(X^*)$ coincides with $\Aff_{I}(X^*)$.
\end{lemma}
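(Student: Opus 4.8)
The plan is to move to the action on the boundary and invoke a classical fact about centralizers of regular actions. Since $\Aff_{I}(X^*)$ is abelian it lies in its own centralizer, so $\Aff_{I}(X^*)\subseteq C$ and only the reverse inclusion is at issue. The crucial observation is that, under the identification of $X^\infty$ with $\Z_d^\infty$, the group $\Aff_{I}(X^*)=\{\pi_{I,\bb}\mid\bb\in\Z_d^\infty\}$ acts on the boundary by $\pi_{I,\bb}(\bx)=\bb+\bx$; in other words, it acts as the \emph{regular} (faithful, simply transitive) action of the abelian group $(\Z_d^\infty,+)$ on itself by translations.

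I would then use the elementary fact that the centralizer in $\Sym(\Z_d^\infty)$ of such a regular action is the group of translations itself: if $\phi\in\Sym(\Z_d^\infty)$ commutes with every translation $L_\bb\colon\bx\mapsto\bb+\bx$, then applying the identity $\phi L_\bb=L_\bb\phi$ to the zero vector gives $\phi(\bb)=\bb+\phi(0)$ for all $\bb$, so $\phi=L_{\phi(0)}$ is again a translation. Hence $C_{\Sym(X^\infty)}\bigl(\Aff_{I}(X^*)\bigr)=\Aff_{I}(X^*)$.

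Finally I would transfer this back into $\Aut(X^*)$. A tree automorphism is determined by its boundary action --- if $g$ fixes every infinite word, then it fixes every finite word $v$, since $g(v)$ is the length-$|v|$ prefix of $g(vu)=vu$ for any infinite continuation $u$ --- so $\Aut(X^*)$ embeds as a subgroup of $\Sym(X^\infty)$, and inside this embedding one has $\Aff_{I}(X^*)\leq\Aut(X^*)\leq\Sym(X^\infty)$. As the embedding is an injective homomorphism, an element of $\Aut(X^*)$ centralizes $\Aff_{I}(X^*)$ as tree automorphisms precisely when it does so as boundary permutations, so
\[
C=\Aut(X^*)\cap C_{\Sym(X^\infty)}\bigl(\Aff_{I}(X^*)\bigr)=\Aut(X^*)\cap\Aff_{I}(X^*)=\Aff_{I}(X^*).
\]

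This route has no real obstacle; the only points demanding care are the faithfulness of the boundary action, which legitimizes the passage to $\Sym(X^\infty)$, and the (already recorded) fact that $\Aff_{I}(X^*)$ acts by honest translations rather than merely affine maps. If instead one wants an argument that never leaves $\Aut(X^*)$, one can work level by level with the wreath recursion, using that $\Aff_{I}(X^*)$ is the closure of $\Delta=\langle\sigma^{(0)},\sigma^{(1)},\ldots\rangle$: commuting with $\sigma^{(0)}$ forces the top permutation of $g$ to centralize the $d$-cycle $\sigma$, hence to be a power of $\sigma$, and forces all first-level states of $g$ to coincide; commuting with $\sigma^{(n)}$ for $n\geq 1$ then forces that common state to commute with all of $\Delta$ again, and one iterates to conclude that $g$ is spherically homogeneous with all level permutations powers of $\sigma$, i.e.\ $g\in\Aff_{I}(X^*)$. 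In that approach the mild nuisance is keeping the induction honest, especially the permutation bookkeeping, which is only non-trivial when $d\geq 3$.
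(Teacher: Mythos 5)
Your main argument is correct, and it takes a genuinely different route from the paper. You pass to the boundary, observe that $\Aff_{I}(X^*)$ acts on $\Z_d^\infty$ as the full group of translations (a simply transitive abelian action), invoke the classical fact that such a group is its own centralizer in the full symmetric group of the underlying set, and pull the conclusion back through the faithful embedding $\Aut(X^*)\hookrightarrow\Sym(X^\infty)$ --- each of these steps is sound, and you correctly flag the two points that need justification (faithfulness of the boundary action and the fact that $\pi_{I,\bb}$ is an honest translation). The paper instead stays inside $\Aut(X^*)$ and argues by a minimal counterexample on levels: it takes the least $n$ at which a putative centralizing element $h$ fails to look like an element of $\Aff_I(X^*)$, and splits into two cases --- either two level-$n$ states of $h$ induce different permutations of $X$ (ruled out by conjugating with an explicit element of $\Aff_I(X^*)$ moving one vertex to the other), or all states induce the same permutation $\lambda\notin\langle\sigma\rangle$ (ruled out because the $d$-cycle $\langle\sigma\rangle$ is self-centralizing in $\Sym(X)$). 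Your approach is shorter and more conceptual, and it makes transparent that the statement is really about regular actions rather than about trees; the paper's approach buys an argument entirely in the language of wreath recursions and states, consistent with the techniques used elsewhere in the paper, and its Case II isolates exactly where the hypothesis that the level permutations are powers of the long cycle matters when $d\geq 3$. Your sketched ``never leave $\Aut(X^*)$'' alternative is essentially the paper's proof reorganized as a level-by-level induction, and, as you note, the bookkeeping there (in particular passing from commuting with the dense subgroup $\Delta$ to commuting with its closure $\Aff_I(X^*)$, and assembling the level-wise conclusions into membership in $\Aff_I(X^*)$) is where the real work would sit if you chose that route.
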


\begin{proof}
Since $\Aff_{I}(X^*)$ is abelian, it is a subgroup of $C$. Suppose there is an element $h\in C-\Aff_{I}(X^*)$. Let $n\geq 0$ be the smallest integer such the action of $h$ on the $(n+1)$-st level does not coincide with corresponding action of any element of $\Aff_{I}(X^*)$. There are two possible cases.

\noindent \textbf{Case I.} There are two distinct vertices $v$ and $v'$ of the $n$-th level of $X^*$ such that the permutations $\lambda$ and $\lambda'$ of $X$ induced by the actions of $h|_v$ and $h|_{v'}$, respectively, on the first level of $X^*$ are different. By the minimality of $n$, there is an element $s\in\Aff_{I}(X^*)$ such that $h_1=s^{-1}h\in C-\Aff_{I}(X^*)$ fixes the $n$-th level of $X^*$.

Let $g\in\Aff_{I}(X^*)$ be an automorphism mapping $v$ to $v'$ and not changing the $(n+1)$-st letter of an input word. Let also $x\in X$ be such that $\lambda(x)\neq\lambda'(x)$. Then we have
\[h_1(g(vx))=h_1(v'x)=v'\lambda'(x)\neq v'\lambda(x)=g(v\lambda(x))=g(h_1(vx)),\]
contradicting to the fact that $h_1\in C$.

\noindent \textbf{Case II.} For all vertices $v$ of level $n$ elements $h|_v$ induce the same permutation $\lambda$ of $X$, but $\lambda$ is not in $\Z_d=\langle(0,1,2,\ldots,d-1)\rangle$. In this case, since $\langle(0,1,2,\ldots,d-1)\rangle$ is self-centralized in $\Sym(X)$, there is
an element $\lambda_1\in\langle(0,1,2,\ldots,d-1)\rangle$ not commuting with $\lambda$. Any element of $\Aff_{I}(X^*)$ acting on the $(n+1)$-st letter of an input word as $\lambda_1$ will not commute with $h$, again contradicting to the fact that $h\in C$.
\end{proof}

\begin{theorem}
\label{thm:normalizer}
The normalizer $N$ of the group $\Aff_{I}(X^*)$ in $\Aut(X^*)$ coincides with the group $\Aff(X^*)$ of all affine automorphisms. In particular, in the case of the binary tree, the normalizer of $\SHAut(\{0,1\}^*)$ in $\Aut(\{0,1\}^*)$ is $\Aff(\{0,1\}^*)$.
\end{theorem}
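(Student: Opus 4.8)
The plan is to prove the two inclusions $\Aff(X^*)\subseteq N$ and $N\subseteq\Aff(X^*)$ separately. The first is immediate: Proposition~\ref{prop:metab_general} says that $\Aff_{I}(X^*)$ is normal in $\Aff(X^*)$, so every affine automorphism normalizes $\Aff_{I}(X^*)$ and therefore lies in $N$. (The binary-tree statement then follows because $\SHAut(\{0,1\}^*)=\Aff_{I}(\{0,1\}^*)$, as noted just after Lemma~\ref{lem:centralizer}.) So all the content is in the reverse inclusion.

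Fix $h\in N$. Conjugation by $h$ preserves $\Aff_{I}(X^*)$, and under the identification $\pi_{I,\bb}\leftrightarrow\bb$ of $\Aff_{I}(X^*)$ with the additive group $\Z_d^\infty$ it induces a group automorphism $\phi$ of $\Z_d^\infty$ determined by $\pi_{I,\bb}^{\,h}=\pi_{I,\phi(\bb)}$. The crucial point is that $\phi$ preserves the descending chain $V_0\supseteq V_1\supseteq V_2\supseteq\cdots$, where $V_n=\{\pi_{I,\bb}:b_1=\cdots=b_n=0\}$ is exactly the set of elements of $\Aff_{I}(X^*)$ fixing the $n$-th level of $X^*$ pointwise, i.e.\ $V_n=\Aff_{I}(X^*)\cap\St(n)$. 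Indeed, $\St(n)$ is the kernel of the action of $\Aut(X^*)$ on the finite set $X^n$, hence normal in $\Aut(X^*)$; since $h$ normalizes both $\St(n)$ and $\Aff_{I}(X^*)$ it normalizes their intersection $V_n$, and the same applied to $h^{-1}$ gives $\phi(V_n)=V_n$ for every $n$.

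From filtration-preservation the matrix is forced. Because $\be_i\in V_{i-1}\setminus V_i$ and $\phi$ induces an automorphism of $V_{i-1}/V_i\cong\Z_d$, the vector $\phi(\be_i)$ lies in $V_{i-1}$ and has a unit in its $i$-th coordinate; writing $\phi(\be_i)=\sum_{j\geq i}a_{ij}\be_j$ therefore produces a matrix $A=(a_{ij})\in U_\infty\Z_d$. Additivity of $\phi$ gives $\phi(\bb)=\bb A$ for every finitely supported $\bb$, and the general case $\bb\in\Z_d^\infty$ follows by a limiting argument: $\bb$ and its truncation $\sum_{i\leq n}b_i\be_i$ agree modulo $V_n$, $\phi$ respects $V_n$, and upper-triangularity of $A$ makes $\bb A$ agree with the truncation's image modulo $V_n$ as well; since $\bigcap_n V_n=\{\mathbf{1}\}$ this forces $\phi(\bb)=\bb A$ everywhere. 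Finally, by Proposition~\ref{prop:metab_general} the affine automorphism $\pi_{A,\mathbf{0}}$ induces on $\Aff_{I}(X^*)$ the very same conjugation $\pi_{I,\bb}\mapsto\pi_{I,\bb A}$; hence $h$ and $\pi_{A,\mathbf{0}}$ differ by an element of the centralizer of $\Aff_{I}(X^*)$, which by Lemma~\ref{lem:centralizer} equals $\Aff_{I}(X^*)\subseteq\Aff(X^*)$, and therefore $h\in\Aff(X^*)$.

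The step I expect to be the real obstacle is extracting the matrix $A$ from $\phi$: a group automorphism of the full product $\Z_d^\infty$ need not be continuous or representable by a matrix, so the reduction from finitely supported vectors to all of $\Z_d^\infty$ must be carried by the filtration property $\phi(V_n)=V_n$ and the resulting upper-triangular shape, rather than by any soft argument. The remaining ingredients — the trivial inclusion, recognizing $V_n$ as a level stabilizer, and the closing appeal to Lemma~\ref{lem:centralizer} — are routine once this is in place.
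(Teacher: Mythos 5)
Your proof is correct and follows essentially the same route as the paper: both arguments use Lemma~\ref{lem:centralizer} to reduce to identifying the conjugation action on $\Aff_I(X^*)\cong\Z_d^\infty$ with an upper-triangular matrix $A\in U_\infty\Z_d$ (triangularity coming from preservation of level stabilizers), then realize that action by $\pi_{A,\mathbf 0}$ and absorb the discrepancy into the centralizer. The one difference is in your favor: the paper simply asserts that every automorphism of $C\cong\Z_d^\infty$ is given by a matrix with rows $\alpha(\sigma^{(i-1)})$, whereas your filtration argument via $V_n=\Aff_I(X^*)\cap\St(n)$, together with the passage from finitely supported vectors to all of $\Z_d^\infty$ using $\bigcap_n V_n=\{\mathbf 1\}$, supplies the justification that step actually needs.
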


\begin{proof}
By Lemma~\ref{lem:centralizer} and the N/C theorem there is a homomorphism $\phi$ from $N$ onto a subgroup of $\Aut(C)$ whose kernel is $C$, defined by $\bigl(\phi(g)\bigr)(h)=g^{-1}hg$. We will denote $\phi(g)$ by $\phi_g$ for simplicity.

Since $C=\Aff_{I}(X^*)$ is isomorphic to $\Z_d^\infty$, each automorphism $\alpha$ of $C$ is defined by a matrix $A_{\alpha}$ invertible over $\Z_d$, which sends a vector $\bb\in\Z_d^\infty$ to $\bb\cdot A$. The $i$-th row of the matrix $A$ corresponds to $\alpha(\sigma^{(i-1)})$.

Suppose now that $g\in N$ is an arbitrary element of $N$. Then since $\sigma^{(n)}$ fixes the $n$-th level of $X^*$, $\phi_g(\sigma^{(n)})=g^{-1}\sigma^{(n)}g$ also must fix the $n$-th level. Therefore, the $(n+1)$-st row of matrix $A_{\phi_g}$ starts with $n$ zeroes, i.e. $A_{\phi_g}$ is upper triangular. At the same time the invertibility of $A_{\phi_g}$ guarantees that $A_{\phi_g}\in U_{\infty}\Z_d$. Moreover, for each $A\in U_{\infty}\Z_d$ by the proof of Proposition~\ref{prop:metab_general}, there is $g=\pi_{A,0}\in N$ such that $A=A_{\phi_g}$.

Finally, since conjugation by elements of the form $\pi_{A,0}$ gives all possible conjugations by elements of $N$, and this set forms a group by Proposition~\ref{prop:affine_basic}, we deduce that
\[N=\Aff_{I}(X^*)\rtimes \{\pi_{A,0}\mid A\in U_\infty\Z_d\},\]
but the last set coincides with $\Aff(X^*)$.
\end{proof}

\section{The principal example}
\label{sec:example}

In this section we give a complete description of the structure of a group $\G$ first mentioned in~\cite{caponi:thesis2014} and studied in~\cite{klimann_ps:orbit_automata}. This group was initially one of the six groups among those generated by 7421 non-minimally symmetric 4-state invertible automata over 2-letter alphabet studied in~\cite{caponi:thesis2014}, for which the existence of elements of infinite order could not be established by standard known methods implemented in~\cite{muntyan_s:automgrp}. In~\cite{klimann_ps:orbit_automata} many such elements were found using a new technique of orbit automata. However, the complete structure of this group was yet to be understood. Below, we develop a new technique to work with this and similar groups that allows us to answer this question completely. In particular, we show that this group contains an index 2 subgroup consisting of affine automorphisms.

The group $\G$ is generated by the 4-state automaton depicted in Figure~\ref{fig:automG} with the following wreath recursion:
\begin{equation}
\label{eqn_autom_def}
\begin{array}{lcl}
a&=&(d,d)\sigma,\\
b&=&(c,c),\\
c&=&(a,b),\\
d&=&(b,a).
\end{array}
\end{equation}
 \begin{figure}[!h]
         \begin{center}
         \includegraphics{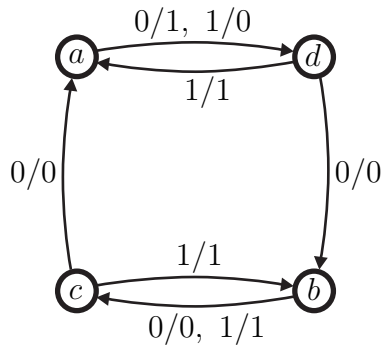}
         \end{center}
 \caption{Automaton $\mathcal A$ generating the group~$\G$.\label{fig:automG}}
 \end{figure}

All generators of~$\G$ have order 2 and the subgroups $\langle a,b\rangle$ and $\langle c,d\rangle$ are both isomorphic to the 4-element Klein group~$\Z_2^2$. We can also rewrite the definition of the group $\G$ in the following form (using the notation introduced before Theorem~\ref{thm:normalizer}):
\[a=d^{(1)}\sigma,\quad b=c^{(1)},\quad c=(a,b),\quad d=(b,a).\]

Consider the following elements of $\G$:
\[x:=ab,\quad y=cd,\quad t=ac.\]
Since $x=y^{(1)}\sigma$ and $y=x^{(1)}$, we immediately get that both $x$ and $y$ are spherically homogeneous. Moreover, it is straightforward to check that $\langle x,y\rangle$ is also isomorphic to $\Z_2^2$. Moreover, the elements $t$ and $t^{-1}$ have the following decompositions:
\begin{eqnarray}
\label{eqn:t}t&=&(x^ty,y)(t^{-1})^{(1)}\sigma\\
\label{eqn:t_t_inv}t^{-1}&=&(y^{t^{-1}},xy^{t^{-1}})t^{(1)}\sigma\label{eqn:t_inv}
\end{eqnarray}

\begin{lemma}
\label{lem:xtyt}
The elements $x^t$, $x^{t^{-1}}$, $y^t$, $y^{t^{-1}}$ are spherically homogeneous.
\end{lemma}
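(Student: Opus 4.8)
The plan is to show that each of the four conjugates, \emph{together with all of its sections}, lies in a finite set of automorphisms of $X^*=\{0,1\}^*$ each of which is homogeneous at the first level, i.e.\ has the form $h^{(1)}$ or $h^{(1)}\sigma$; by the characterization recalled in Section~\ref{sec:prelim} ($g$ is spherically homogeneous iff $g|_u=g|_v$ whenever $|u|=|v|$) this is exactly what is needed. I would work with the wreath recursion~\eqref{eqn_autom_def}, using throughout that $\langle a,b\rangle$, $\langle c,d\rangle$ and $\langle x,y\rangle$ are Klein four-groups and that $x=y^{(1)}\sigma$, $y=x^{(1)}$.

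First, reduce the four elements: since $x=ab$ is centralized by $a$ and $y=cd$ by $c$, and $t=ac$, one has
\[x^t=x^c,\qquad x^{t^{-1}}=(x^c)^a,\qquad y^{t^{-1}}=y^a,\qquad y^t=(y^a)^c.\]
A direct wreath computation with $x=y^{(1)}\sigma$, $y=x^{(1)}$ and $[x,y]=1$ — the last relation being what forces the coordinate identities $ayb=bya$ and $y^a=y^b$ (both equivalent to $(ayb)^2=1$) — then produces the closed chain of sections
\[x^t=x^c=(ayb)^{(1)}\sigma,\quad ayb=(dxc)^{(1)}\sigma,\quad dxc=(y^a)^{(1)}\sigma,\quad y^a=(x^d)^{(1)},\quad x^d=(ayb)^{(1)}\sigma.\]
Thus every section of $x^t$ lies in $\{x^t,ayb,dxc,y^a,x^d\}$, and each member is homogeneous at the first level; hence $x^t$ — and simultaneously $y^{t^{-1}}=y^a$, as well as $dxc$ and $x^d$ — is spherically homogeneous. (One also checks $x^c=x^d$, as $x$ commutes with $cd=y$.)

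The cases $x^{t^{-1}}=(x^c)^a=(x^d)^a$ and $y^t=(y^a)^c$ I would treat the same way. Conjugating the diagonal forms found above by $a$ and $d$ and iterating again produces a finite list of automorphisms, each homogeneous at the first level; the only new point is that passing from $(g_0,g_1)$-form to diagonal form now requires commutation relations of the type $[x^d,x]=1$ and $[dxc,x]=1$. These hold because $x^d$, $dxc$ and $x$ are already known to be spherically homogeneous and $\SHAut(X^*)$ is abelian. This gives that $x^{t^{-1}}$ is spherically homogeneous. Finally
\[y^t=(y^a)^c=\bigl((x^d)^a,(x^d)^b\bigr)=\bigl((x^d)^a\bigr)^{(1)}=(x^{t^{-1}})^{(1)},\]
where $[x^d,x]=1$ is used for $(x^d)^a=(x^d)^b$ and $x^d=x^c$ to identify $(x^d)^a=(x^c)^a$ with $x^{t^{-1}}$; being of the form $(x^{t^{-1}})^{(1)}$ with $x^{t^{-1}}$ spherically homogeneous, $y^t$ is spherically homogeneous.

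The only real labor is the bookkeeping of the finitely many sections, and the one thing to be careful about is the order of the argument: the chain for $x^t$ closes using nothing beyond the defining relations, so it establishes spherical homogeneity of $x^t,x^d,dxc,y^a$ \emph{unconditionally}, and only afterwards may one invoke commutativity of $\SHAut(X^*)$, which is precisely what unlocks $x^{t^{-1}}$ and $y^t$. One can alternatively run the whole argument starting from the decompositions~\eqref{eqn:t} and~\eqref{eqn:t_inv} rather than from~\eqref{eqn_autom_def}: these yield, for instance, $x^t=(xy^{t^{-1}})^{(1)}\sigma$ and $y^{t^{-1}}=(x^t)^{(1)}$ — here one needs $[x^t,y]=1$, which follows from $x^t=x^c$ and $y^c=y$ — and then $\{x^t,\,xy^{t^{-1}},\,x^{t}y,\,xy^{t^{-1}}x\}$ is the closed chain of sections of $x^t$, with the other three elements handled analogously.
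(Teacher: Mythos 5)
Your proof is correct, but it takes a genuinely different route from the paper's. The paper proves the lemma by conjugating $x$ and $y$ by the decompositions~\eqref{eqn:t} and~\eqref{eqn:t_t_inv} of $t^{\pm1}$, obtaining the short closed cycle $x^t=\bigl(xy^{t^{-1}}\bigr)^{(1)}\sigma$, $y^{t^{-1}}=(x^t)^{(1)}$ (and symmetrically for the other pair); your main argument instead never touches those decompositions and works directly from the wreath recursion~\eqref{eqn_autom_def}, reducing the four conjugates to conjugates by the involutions $a,c,d$ and exhibiting the explicit closed chain $x^c\to ayb\to dxc\to y^a\to x^d\to ayb$ in the generators. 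Both arguments rest on the same underlying principle (a set of automorphisms closed under first-level sections, each homogeneous on the first level, consists of spherically homogeneous elements), so the difference is in the bookkeeping --- but the difference buys you something real: the paper's computation silently uses relations such as $y\,x^t\,y=x^t$ whose availability at that stage is not addressed, whereas you isolate exactly which commutation facts are needed where (only $[x,y]=1$ for the first chain; commutativity inside $\SHAut(\{0,1\}^*)$, legitimately available only after the first chain is closed, for $x^{t^{-1}}$ and $y^t$). Your observation that $[x^t,y]=[x^c,y^c]=[x,y]^c=1$ also patches this point in the paper's own route. The one place you only sketch is the chain for $x^{t^{-1}}=(x^d)^a$; I checked that it does close (through $(ayb)^d$, $b\,dxc\,a$, $c\,aya\,d$, $a\,x^d\,b$, $d\,ayb\,c$, $(dxc)^a$, $(y^a)^d$ and back to $(x^d)^a$), with each step requiring only $[x,dxc]=1$ or $[x,x^d]=1$ as you predicted, so the sketch is completable exactly as stated. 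The price of your approach is length; its reward is that the logical order of deductions is airtight and everything is derived from the automaton itself.
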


\begin{proof}
According to equations~\eqref{eqn:t} and~\eqref{eqn:t_t_inv} we have:
\begin{multline*}
x^t=\left(y^{t^{-1}},xy^{t^{-1}}\right)t^{(1)}\sigma\cdot (y,y)\sigma\cdot(x^ty,y)(t^{-1})^{(1)}\sigma\\=\left(y^{t^{-1}}tyx^tyt^{-1},xy^{t^{-1}}tyyt^{-1}\right)\sigma=\left(xy^{t^{-1}},xy^{t^{-1}}\right)\sigma.
\end{multline*}
Since $x$ is spherically homogeneous we need only to show that $y^{t^{-1}}$ is also spherically homogeneous. Indeed,
\begin{multline*}
y^{t^{-1}}=(x^ty,y)(t^{-1})^{(1)}\sigma\cdot (x,x)\cdot\left(y^{t^{-1}},xy^{t^{-1}}\right)t^{(1)}\sigma\\=
\left(x^tyt^{-1}xxy^{t^{-1}}t,yt^{-1}xy^{t^{-1}}t\right)\sigma
=\left(x^t,x^t\right)
\end{multline*}
so we again obtain $x^t$, which implies that both $x^t$ and $y^{t^{-1}}$ are spherically homogeneous. Similar argument proves that $x^{t^{-1}}$ and $y^t$ are also spherically homogeneous.
\end{proof}

\begin{lemma}
\label{lem:t_in_norm}
The automorphism $t$ lies in the normalizer of the group $\SHAut(X^*)$.
\end{lemma}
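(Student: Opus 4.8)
The plan is to verify directly that conjugation by $t$ preserves $\SHAut(X^*)$ (here $X=\{0,1\}$), without going through Theorem~\ref{thm:normalizer}. Since $\SHAut(X^*)=\Aff_I(X^*)$ is abelian of exponent $2$ and equals the closure of the finitary group $\Delta=\langle\sigma^{(n)}:n\ge 0\rangle$, and since conjugation is continuous on the profinite group $\Aut(X^*)$, it suffices to show that $g^{-1}\sigma^{(n)}g\in\SHAut(X^*)$ for every $n\ge 0$ and every $g$ in the finite set
\[\mathcal S=\{ac,\ ca,\ bc,\ cb,\ db,\ bd,\ ad,\ da\}.\]
Taking $g=t=ac$ gives $t^{-1}\SHAut(X^*)\,t\subseteq\SHAut(X^*)$, and taking $g=t^{-1}=ca$ (all generators are involutions) gives $t\,\SHAut(X^*)\,t^{-1}\subseteq\SHAut(X^*)$; applying $t^{-1}(\cdot)\,t$ to the second inclusion upgrades the first to an equality. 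First I would record two facts. (a) $\mathcal S$ is closed under $g\mapsto g|_0$: expanding the wreath recursion, $ac=(db,da)\sigma$, $db=(bc,ac)$, $bc=(ca,cb)$, $ca=(ad,bd)\sigma$, $ad=(da,db)\sigma$, $da=(bd,ad)\sigma$, $bd=(cb,ca)$, $cb=(ac,bc)$, so $\mathcal S$ forms a single $8$-cycle under taking the state at $0$. (b) Using that $a,b,c,d$ are involutions and $a,b$ commute, the defect $s_g:=(g|_0)^{-1}(g|_1)$ lies in $\{x,x^c,x^d\}$ for every $g\in\mathcal S$, where $x=ab$; e.g.\ $s_{ac}=(db)^{-1}(da)=bd\cdot da=ba=x$, $s_{db}=(bc)^{-1}(ac)=cb\cdot ac=c(ab)c=x^c$, $s_{ca}=(ad)^{-1}(bd)=da\cdot bd=d(ab)d=x^d$.

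The key preliminary step is to show $x,x^c,x^d\in\SHAut(X^*)$. For $x$ this is known ($x=y^{(1)}\sigma$ is spherically homogeneous). For $x^c$: since $\langle a,b\rangle$ is Klein, $x^a=a(ab)a=ba=ab=x$, hence $x^t=(ac)^{-1}x(ac)=c^{-1}(x^a)c=c^{-1}xc=x^c$, and $x^t$ is spherically homogeneous by Lemma~\ref{lem:xtyt}. For $x^d$: as $c,d$ are involutions, $x^d=x^c\iff dxd=cxc\iff (cd)\,x\,(dc)=x$, and $cd=y$ while $dc=(cd)^{-1}=y$ (since $y^2=1$), so this is equivalent to $yxy=x$, i.e.\ to $xy=yx$, which holds because $\langle x,y\rangle\cong\Z_2^2$. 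Thus $x^d=x^c\in\SHAut(X^*)$. In particular each $s_g$ lies in the abelian, exponent-$2$ group $\SHAut(X^*)$, so $s_g=s_g^{-1}$.

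Then I would conclude by induction on $n$, proving $g^{-1}\sigma^{(n)}g\in\SHAut(X^*)$ for all $g\in\mathcal S$ simultaneously. Write $g=(p,q)\tau$ with $p=g|_0\in\mathcal S$, $q=g|_1=p\,s_g$, and $\tau\in\{1,\sigma\}$. For $n=0$ a short wreath-recursion computation gives $g^{-1}\sigma^{(0)}g=(s_g,s_g^{-1})\sigma=(s_g,s_g)\sigma\in\SHAut(X^*)$, using $s_g=s_g^{-1}$ and $s_g\in\SHAut(X^*)$. For $n\ge 1$, using $\sigma^{(n)}=(\sigma^{(n-1)},\sigma^{(n-1)})$ one finds that $g^{-1}\sigma^{(n)}g$ has trivial root permutation and that its two level-$1$ states are $(\sigma^{(n-1)})^{p}$ and $(\sigma^{(n-1)})^{q}$ (in one order or the other). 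By the inductive hypothesis applied to $p\in\mathcal S$, $(\sigma^{(n-1)})^{p}\in\SHAut(X^*)$; and $(\sigma^{(n-1)})^{q}=\bigl((\sigma^{(n-1)})^{p}\bigr)^{s_g}=(\sigma^{(n-1)})^{p}$, because $(\sigma^{(n-1)})^{p}$ and $s_g$ both lie in the abelian group $\SHAut(X^*)$. Hence the two states coincide and lie in $\SHAut(X^*)$, whence $g^{-1}\sigma^{(n)}g\in\SHAut(X^*)$, closing the induction.

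The hard point, and the reason a naive argument fails, is the self-referential nature of $t$: the chain of states $t,t|_0,t|_{00},\dots$ cycles back to $t$, so there is no recursion on tree depth to exploit (equivalently, showing $t\in\Aff(X^*)$ directly from Proposition~\ref{prop:state} would require the affine data of $t$, which depends on the affine data of $t$ itself). The device that breaks the circle is to prove the statement for the whole finite cycle $\mathcal S$ at once and to induct on the level index $n$ rather than on tree depth, so that the hypothesis for $g|_0\in\mathcal S$ is available; and to observe that $t^{-1}$ is again a member of $\mathcal S$, which is what promotes the one-sided inclusion to normality. The only ingredient external to this self-contained computation is Lemma~\ref{lem:xtyt} (used solely to place $x^c=x^t$ inside $\SHAut(X^*)$), together with the Klein-four relations in $\langle a,b\rangle$ and $\langle x,y\rangle$.
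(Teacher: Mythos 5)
Your proof is correct; I checked the wreath-recursion computations (the $8$-cycle $ac\to db\to bc\to ca\to ad\to da\to bd\to cb\to ac$ under $g\mapsto g|_0$, the defects $s_g\in\{x,x^c,x^d\}$, and the base case and inductive step) and they all hold. The core strategy is the same as the paper's: an induction on the level index $n$ showing that conjugates of $\sigma^{(n)}$ land in $\SHAut(X^*)$, with Lemma~\ref{lem:xtyt} supplying the spherically homogeneous ingredients, and with the self-referential cycle of sections of $t$ broken by proving the claim for a finite section-closed family all at once. Where you differ is in the choice of that family and in bookkeeping: the paper compresses everything into the two-element set $\{t,t^{-1}\}$ by means of the explicit factorizations $t=(x^ty,y)(t^{-1})^{(1)}\sigma$ and $t^{-1}=(y^{t^{-1}},xy^{t^{-1}})t^{(1)}\sigma$ (equations~\eqref{eqn:t} and~\eqref{eqn:t_t_inv}), so that the induction swaps $t\leftrightarrow t^{-1}$ at each step, whereas you work with the raw sections and carry the full $8$-element orbit $\mathcal S$, recording only the defect $(g|_0)^{-1}(g|_1)$. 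The paper's version buys brevity in the induction (two cases instead of eight, and the conjugation formulas are computed once in \eqref{eqn:t}--\eqref{eqn:t_t_inv}); yours buys transparency and avoids having to derive and manipulate those factorizations, and it isolates a reusable mechanism (a finite set closed under sections whose first-level states differ by spherically homogeneous elements) that would apply verbatim to other automata of this kind. Your explicit treatment of the two-sided inclusion, via $t^{-1}=ca\in\mathcal S$, is also slightly more careful than the paper's, which simply verifies that both $(\sigma^{(n)})^{t}$ and $(\sigma^{(n)})^{t^{-1}}$ lie in $\SHAut(X^*)$.
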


\begin{proof}
It is enough to prove that $(\sigma^{(n)})^t,(\sigma^{(n)})^{t^{-1}}\in\SHAut(X^*)$. We prove this by induction on $n$.

First we verify the induction base for $n=0$ and $\sigma^{(0)}=(1,1)\sigma$. Using expressions~\eqref{eqn:t} and~\eqref{eqn:t_t_inv} we calculate:
\begin{multline*}
(\sigma^{(0)})^t=\left(y^{t^{-1}},xy^{t^{-1}}\right)t^{(1)}\sigma\cdot (1,1)\sigma\cdot(x^ty,y)(t^{-1})^{(1)}\sigma\\=\left(y^{t^{-1}}tx^tyt^{-1},xy^{t^{-1}}tyt^{-1}\right)\sigma
=\left(y^{t^{-1}}xy^{t^{-1}},x\bigl(y^{t^{-1}}\bigr)^2\right)\sigma=(x,x)\sigma.
\end{multline*}
Similarly
\begin{multline*}
(\sigma^{(0)})^{t^{-1}}=(x^ty,y)(t^{-1})^{(1)}\sigma\cdot (1,1)\sigma\cdot\left(y^{t^{-1}},xy^{t^{-1}}\right)t^{(1)}\sigma\\=
\left(x^tyt^{-1}y^{t^{-1}}t,yt^{-1}xy^{t^{-1}}t\right)\sigma
=\left(x^ty^2,y^2x^t\right)\sigma=(x^t,x^t)\sigma,
\end{multline*}
which shows that $(\sigma^{(0)})^{t^{-1}}$ is spherically homogeneous by Lemma~\ref{lem:xtyt}.

To prove the induction step, we assume that $(\sigma^{(n)})^t,(\sigma^{(n)})^{t^{-1}}\in\SHAut(X^*)$ for some $n\geq 1$. Then
\begin{multline*}
(\sigma^{(n+1)})^t=\left(y^{t^{-1}},xy^{t^{-1}}\right)t^{(1)}\sigma\cdot (\sigma^{(n)},\sigma^{(n)})\cdot(x^ty,y)(t^{-1})^{(1)}\sigma\\
=\left(y^{t^{-1}}t\sigma^{(n)}yt^{-1},xy^{t^{-1}}t\sigma^{(n)}x^tyt^{-1}\right)
=\left(y^{t^{-1}}(\sigma^{(n)})^{t^{-1}}y^{t^{-1}},xy^{t^{-1}}(\sigma^{(n)})^{t^{-1}}xy^{t^{-1}}\right)\\=\left((\sigma^{(n)})^{t^{-1}},(\sigma^{(n)})^{t^{-1}}\right)
\end{multline*}
is again spherically homogeneous by the inductive assumption and Lemma~\ref{lem:xtyt}. Similarly one can show that $(\sigma^{(n+1)})^{t^{-1}}$ is also in $\SHAut(X^*)$.
\end{proof}

As a direct corollary of the previous lemma and Theorem~\ref{thm:normalizer} we obtain that $t$ is an affine automorphism and, hence, $t=\pi_{A,\bb}$ for some $A$ and $\bb$. More precisely, we get:

\begin{corollary}
The automorphism $t$ is equal to $\pi_{A,\bb}$ for the matrix $A$ with the row $2i-1$ (resp., row $2i$) of the form $[0^{2i-2},1,(1,0)^\infty]$ (resp., $[0^{2i-1},1,(1,1,1,0)^\infty]$) for $i\geq 1$ (see Figure~\ref{fig:matrix}), and for $\bb=[(1,0,0,1,1,1,0,0)^\infty]$.
\end{corollary}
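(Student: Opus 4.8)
The plan is to make use of the recursive description of $t$ already available in equations~\eqref{eqn:t} and~\eqref{eqn:t_t_inv}, together with the formula for the state of an affine automorphism from Proposition~\ref{prop:state}. We know from the preceding corollary that $t=\pi_{A,\bb}$ for some matrix $A\in U_\infty\Z_2$ and some vector $\bb\in\Z_2^\infty$, and that $t$ is finite state (it is defined by a finite automaton), so by the finite-state theorem $A$, its rows, and $\bb$ are all eventually periodic; in particular the claimed explicit forms are at least plausible candidates. The idea is to pin down $A$ and $\bb$ by extracting a self-similarity relation for the pair $(A,\bb)$ from the wreath recursion, and then solve it.

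First I would compute the states of $t$ at the two first-level vertices directly from~\eqref{eqn:t}: since $t=(x^ty,\,y)(t^{-1})^{(1)}\sigma$, we get $t|_0 = x^t y t^{-1}$ and $t|_1 = y t^{-1}$, where here I use the convention that composing along a path multiplies the relevant states. Both of these should themselves be affine automorphisms (since $x,y$ and their $t^{\pm1}$-conjugates are spherically homogeneous, hence affine with identity linear part, and products/inverses of affine automorphisms are affine by Proposition~\ref{prop:affine_basic}). On the other hand, Proposition~\ref{prop:state} tells us that $t|_0=\pi_{\sigma(A),\,0\cdot\sigma([1,0,0,\dots]A)+\sigma(\bb)}=\pi_{\sigma(A),\sigma(\bb)}$ and $t|_1=\pi_{\sigma(A),\,\sigma([1,0,0,\dots]A)+\sigma(\bb)}$. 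Comparing linear parts: $\sigma(A)$ must be the matrix obtained from $x^tyt^{-1}$ and from $yt^{-1}$ — but these are products whose linear parts are all determined by $t$ alone (the spherically homogeneous factors contribute identity linear parts), so the linear part of $t|_0$ equals the linear part of $t^{-1}$, which by Proposition~\ref{prop:affine_basic}(c) is $A^{-1}$. This gives the single self-similarity equation $\sigma(A)=A^{-1}$ (over $\Z_2$, so $A^{-1}=A^{-1}$ and $A=\pi_{\sigma(A),*}^{-1}$-data match up). One then checks that the proposed matrix — with odd rows $[0^{2i-2},1,(1,0)^\infty]$ and even rows $[0^{2i-1},1,(1,1,1,0)^\infty]$ — is the unique upper-triangular unipotent solution of $\sigma(A)=A^{-1}$; this is a direct if slightly tedious verification, best done by computing $A^{2}$ row by row and confirming $A^2=\sigma^{-1}$-shifted-to-identity, i.e. that the first row of $A$ is the unique vector making everything consistent. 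Concretely, over $\Z_2$ it suffices to verify that $A\cdot A$ has its $i$-th row equal to $\be_i$ extended by the shift, which reduces to a convolution identity on the periodic sequences $(1,0)^\infty$ and $(1,1,1,0)^\infty$.

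Second, I would determine $\bb$ by bootstrapping from the state formulas for the translation part. From $t|_1=\pi_{\sigma(A),\,\sigma(\ba_1)+\sigma(\bb)}$ (writing $\ba_1=[1,0,0,\dots]A$ for the first row of $A$) and the matching expression $t|_1=yt^{-1}$, whose translation part can be written in terms of the translation part of $t^{-1}$ (which is $-\bb A^{-1}=\bb A^{-1}$ over $\Z_2$) plus the contribution of the spherically homogeneous factor $y$ — and $y=cd$ has a known, computable vector as an element of $\Aff_I(X^*)$ — one gets a linear recursion relating $\sigma(\bb)$, $\bb$, and known data. Similarly the $t|_0$ equation gives a second relation involving $x^tyt^{-1}$; subtracting the two isolates $\ba_1$ (already known) and leaves a genuine recursion for $\bb$ of the shape $\sigma(\bb)=\bb\cdot(\text{known matrix})+(\text{known vector})$, which together with the requirement that $\bb$ be the eventually periodic solution forces $\bb=[(1,0,0,1,1,1,0,0)^\infty]$; one verifies this by plugging in and checking the period-$8$ pattern is fixed by the recursion. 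The vectors for $x,y,x^t,y^t$, etc., as elements of $\Aff_I(X^*)$ can be read off from the explicit wreath recursions computed in Lemmas~\ref{lem:xtyt} and~\ref{lem:t_in_norm} (e.g. $x=y^{(1)}\sigma$ shows $x$ acts by $\sigma$ on the first letter and then as $y$ deeper, giving an interleaving of the $y$-vector; $x^t=(xy^{t^{-1}},xy^{t^{-1}})\sigma$ from the proof of Lemma~\ref{lem:xtyt} gives its vector recursively).

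The main obstacle I expect is not any conceptual difficulty but the bookkeeping in the second step: correctly tracking the translation vectors of all the spherically homogeneous conjugates $x^{t^{\pm1}}, y^{t^{\pm1}}$ as interleaved/periodic sequences, and then assembling the linear recursion for $\bb$ without sign or index errors (the $\Z_2$ setting mercifully kills all the signs, which helps). A secondary subtlety is justifying uniqueness: $\sigma(A)=A^{-1}$ alone does not pin down $A$ without also using that $A$ is upper triangular unipotent and that the first row is constrained by the requirement that $t|_1$ really is $yt^{-1}$ (which fixes the first row of $A$, equivalently the "new" periodic tail that gets exposed by one shift). I would present the verification as: (i) state the candidate $A$ and $\bb$; (ii) check $\sigma(A)=A^{-1}$ and $\sigma(\bb)+0\cdot\sigma(\ba_1)=\sigma(\bb)$, $\sigma(\bb)+\sigma(\ba_1)$ match the translation parts of $t|_0,t|_1$ computed from the wreath recursion; (iii) invoke Proposition~\ref{prop:affine_basic}(a) (uniqueness of the $(A,\bb)$ representation) and the fact that an affine automorphism is determined by its action, so that agreeing on both first-level states plus the top permutation $\sigma$ forces equality with $t$ — an induction on levels, exactly parallel to the induction in Lemma~\ref{lem:t_in_norm}.
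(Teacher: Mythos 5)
Your route is genuinely different from the paper's, and as written it has a gap at the exact point where the data $(A,\bb)$ must actually be pinned down. The paper's proof is essentially a direct evaluation on the boundary: since $\pi_{A,\bb}([0,0,0,\ldots])=\bb$ and $\pi_{A,\bb}(\be_i)=\bb+\ba_i$, one reads off $\bb=t(0^\infty)$ and the $i$-th row as $\ba_i=t(\be_i)-\bb$ simply by running the automaton on these boundary points; the only structural input is that $t|_{01}=t$ (a length-two loop in the automaton for $t$), which via Proposition~\ref{prop:state} and Proposition~\ref{prop:affine_basic}(a) forces $\sigma^2(A)=A$, so that only the first two rows ever need to be computed. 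Your proposal never uses this evaluation trick and instead tries to recover $(A,\bb)$ as the solution of functional equations extracted from the wreath recursion of $t$.

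The equation you derive for the linear part, $\sigma(A)=A^{-1}$, is correct (and is consistent with, indeed stronger than, the paper's $\sigma^2(A)=A$, since $\sigma$ commutes with inversion on upper triangular matrices). But the claim that the displayed matrix is ``the unique upper-triangular unipotent solution of $\sigma(A)=A^{-1}$'' is false: $A=I$ is another solution, and in general the equation determines rows $2,3,\ldots$ only after the first row is prescribed. You concede this at the end, but the mechanism you offer for fixing the first row and the vector $\bb$ does not close. Matching translation parts of $t|_0=x^tyt^{-1}$ and $t|_1=yt^{-1}$ against Proposition~\ref{prop:state} gives $\sigma(\ba_1)=\bc_{x^t}A^{-1}$ and $\sigma(\bb)=(\bc_{x^t}+\bc_y+\bb)A^{-1}$, where $\bc_{x^t}$, $\bc_y$ denote the vectors of $x^t,y$ in $\Aff_I(X^*)$. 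These relations involve the still-unknown $A^{-1}$, the still-unknown $\bb$ (whose first entry $b_1$ is not constrained by any shift recursion and must be obtained by evaluating $t$ somewhere), and the vector $\bc_{x^t}$, which is itself defined by conjugation by the unknown $t$ (it is computable from the recursions in the proof of Lemma~\ref{lem:xtyt}, but you would have to carry that out explicitly). Finally, the identification of the resulting candidate pair with $t$ is only gestured at. All of this can in principle be repaired, but the repair essentially amounts to rediscovering the paper's one-line observation that $\bb$ and the rows of $A$ are literally the images under $t$ of $0^\infty$ and of the standard basis vectors.
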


\begin{figure}[!h]
\begin{center}
\includegraphics[width=200pt]{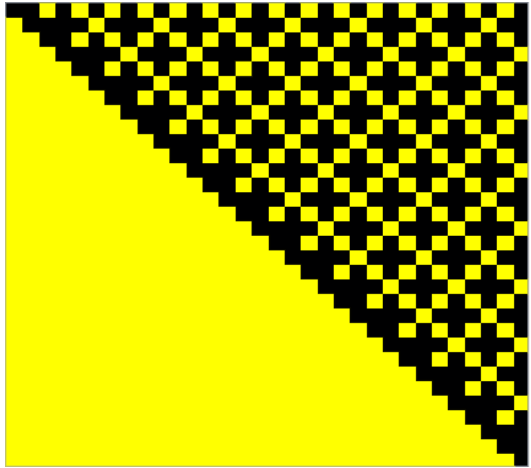}
\end{center}
\caption{$32\times32$-minor of matrix $A$ involved in the definition of $t=ac$, where the black squares indicate 1's.\label{fig:matrix}}
\end{figure}

\begin{proof}
We find $\bb$ simply by computing $\bb=\bb+[0,0,0,\ldots]\cdot A=\pi_{A,\bb}(0^\infty)=t(0^\infty)=[(1,0,0,1,1,1,0,0)^\infty]$. With the knowledge of $\bb$ we can compute the $i$-th row $\ba_i$ of matrix $A$. Let $\be_i=[0,0,\ldots,0,1,0,\ldots]$ be the $i$-th standard basis vector in $\Z_2^\infty$. Then $t(\be_i)=\bb+\be_i\cdot A=\bb+\ba_i$ and, thus,
\begin{equation}
\label{eqn:rows_of_A}
\ba_i=t(\be_i)-\bb.
\end{equation}
Moreover, since $t|_{01}=t$, by Proposition~\ref{prop:state} we obtain
\[\pi_{A,\bb}=\pi_{A,\bb}|_{01}=\pi_{\sigma^2(A),b'}\]
for some vector $b'$. Therefore, by Proposition~\ref{prop:affine_basic}(a), $\sigma^2(A)=A$ and we only need to compute the first two rows of $A$. A direct computation using~\eqref{eqn:rows_of_A} twice yields:
\[\begin{array}{ll}
\ba_1=[1,(1,0)^\infty]\\
\ba_2=[0,1,(1,1,1,0)^\infty].
\end{array}\]
\end{proof}

By Lemma~\ref{lem:t_in_norm}, conjugates of any spherically homogeneous element $z\in\SHAut(X^*)$ by powers (possibly negative) of $t$ are also spherically homogeneous, and hence commute. Therefore the following convenient notation is well-defined for $i_j\in\Z$.
\begin{equation}
\label{eqn:notation_exp}
z^{t^{i_1}+t^{i_2}+\cdots+t^{i_n}}:=z^{t^{i_1}}z^{t^{i_2}}\cdots z^{t^{i_n}}.
\end{equation}
In particular, for each Laurent polynomial $p(t)\in\Z_2[t,t^{-1}]$ the elements $x^{p(t)}$ and $y^{p(t)}$ are defined. In order to show that $\langle x,y,t\rangle$ is isomorphic to $\Z_2^2\wr\Z$, it is enough to show that for each pair of Laurent polynomials $p(t)$, $q(t)$ the element $x^{p(t)}y^{q(t)}$ is not trivial. Note, that the idea of translating the base group in the lamplighter groups as powers of the ring of Laurent polynomials have been used in~\cite{grigorch_k:lamplighter,sidki_d:lamplighter}.

\begin{lemma}
\label{lem:xpq}
Let $p(t)$ and $q(t)$ be two Laurent polynomials. Then
\begin{eqnarray}
\label{eqn:pqt}
\left((x^{p(t)}y^{q(t)})^{(1)}\sigma\right)^t&=&(x^{p(t)t^{-1}+1}y^{q(t)t^{-1}})^{(1)}\sigma\\
\left((x^{p(t)}y^{q(t)})^{(1)}\sigma\right)^{t^{-1}}&=&(x^{p(t)t+t}y^{q(t)t})^{(1)}\sigma\\
\left((x^{p(t)}y^{q(t)})^{(1)}\right)^t&=&(x^{p(t)t^{-1}}y^{q(t)t^{-1}})^{(1)}\\
\left((x^{p(t)}y^{q(t)})^{(1)}\right)^{t^{-1}}&=&(x^{p(t)t}y^{q(t)t})^{(1)}
\end{eqnarray}
\end{lemma}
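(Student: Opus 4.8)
The plan is to verify the four identities by direct computation using the wreath recursions \eqref{eqn:t} and \eqref{eqn:t_t_inv} for $t$ and $t^{-1}$, together with the notation \eqref{eqn:notation_exp}. The key observation that makes the computation manageable is the following: if $w\in\SHAut(X^*)$ has decomposition $w=(w_1,w_2)\sigma^\varepsilon$, then conjugating $w$ by $t=(x^ty,y)(t^{-1})^{(1)}\sigma$ amounts to conjugating the first-level states by $t^{-1}$ (since $t^{(1)}$ appears in $t^{-1}$ and $(t^{-1})^{(1)}$ in $t$), then permuting and adjusting by the non-$\sigma$-part of $t$. So I would first establish the general ``one-level'' formula: for $w=u^{(1)}\sigma$ with $u\in\SHAut(X^*)$ spherically homogeneous,
\[
w^t=\left(y^{t^{-1}}\,t\,u\,t^{-1}\,x^t y,\; x y^{t^{-1}}\,t\,u\,t^{-1}\,y\right)\sigma
=\left(y^{t^{-1}} u^{t^{-1}} x^t y,\; x y^{t^{-1}} u^{t^{-1}} y\right)\sigma,
\]
and then simplify using that all the factors involved ($x$, $y^{t^{-1}}$, $u^{t^{-1}}$, $x^t$) are spherically homogeneous by Lemma~\ref{lem:xtyt} and hence pairwise commute, and that $x,y$ have order $2$. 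With $u=x^{p(t)}y^{q(t)}$ one gets $u^{t^{-1}}=x^{p(t)t}y^{q(t)t}$; collecting the $x$-exponents gives $p(t)t+1+1=p(t)t+1\pmod 2$ wait — I should be careful: the outer conjugation by $t$ shifts exponents by $t^{-1}$, not $t$, because $z^{t^{i}\cdot t}=z^{t^{i+1}}$ but the state-level conjugation is by $t^{-1}$. Let me restate: the cleanest route is to posit the answer and check consistency.

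More precisely, the second clean route — and the one I would actually write up — is to induct on the number of monomials in $p(t),q(t)$, reducing \eqref{eqn:pqt} to the case $p(t)=t^i$, $q(t)=t^j$ (or even $p,q\in\{0,t^k\}$) and then to the base relations. By Lemma~\ref{lem:t_in_norm} the maps $z\mapsto z^t$ and $z\mapsto z^{t^{-1}}$ are automorphisms of $\SHAut(X^*)$, so it suffices to verify the four formulas on the generators $x^{(1)}\sigma=x$, $y^{(1)}=y^{t}$-shifted generators, and on $\sigma^{(n)}$; but in fact the whole point is that $(x^{p}y^{q})^{(1)}\sigma$ and $(x^{p}y^{q})^{(1)}$ already exhaust $\langle x,y\rangle^{t^{\mathbb Z}}$ together with the $\sigma^{(n)}$, so I would instead just verify the recursion on the two ``shapes'' directly. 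For shape $(x^{p}y^{q})^{(1)}$ (no top $\sigma$): conjugating by $t$ replaces $t^{(1)}$-parts, and since there is no $\sigma$ at the top the two halves stay in place, giving states $(y^{t^{-1}})\cdot(x^{p}y^{q})^{t^{-1}}\cdot(y^{t^{-1}})^{-1}$-type products that collapse — because everything commutes and squares vanish — to exactly $(x^{p t}y^{q t})^{(1)}$; this is identity three (with the correct shift direction determined by matching \eqref{eqn:t}). For shape $(x^{p}y^{q})^{(1)}\sigma$ the extra $\sigma$ forces a swap of the two halves and picks up the discrepancy $x$ between the two states of $t$ versus $t^{-1}$, which contributes the ``$+1$'' (i.e. an extra factor $x$, i.e.\ $x^{t^0}$) seen in \eqref{eqn:pqt}.

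I expect the main obstacle to be purely bookkeeping: keeping track of which conjugation ($t$ vs.\ $t^{-1}$) acts at the state level versus the top level, and getting the direction of the exponent shift right (multiplication by $t$ versus $t^{-1}$ in the Laurent-polynomial exponent), since the wreath recursion for $t$ involves $(t^{-1})^{(1)}$ and not $t^{(1)}$. The order-$2$ and commutativity reductions (justified by Lemmas~\ref{lem:xtyt} and~\ref{lem:t_in_norm}) will make every product of the form $\cdots x^a\cdots x^a\cdots$ and $\cdots y^b\cdots y^b\cdots$ telescope, so the only real content is tracking the surviving exponents; I would organize the write-up as the four displayed computations, each one or two lines after the commutativity collapse, in the same style as the proofs of Lemmas~\ref{lem:xtyt} and~\ref{lem:t_in_norm}.
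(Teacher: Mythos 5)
Your overall approach --- direct computation from the wreath recursions \eqref{eqn:t} and \eqref{eqn:t_t_inv}, followed by collapsing everything using that spherically homogeneous elements commute and that $x^2=y^2=1$ --- is exactly the paper's. The paper organizes the computation slightly more cleanly: it conjugates $(x^{p}y^{q})^{(1)}\sigma$ first by the spherically homogeneous part $(x^ty,y)$ of $t$, which (because of the top-level swap) contributes the correction factor $x^t$ and yields $(x^{p(t)+t}y^{q(t)})^{(1)}\sigma$, and then by $(t^{-1})^{(1)}\sigma$, which multiplies all exponents by $t^{-1}$; this gives $p(t)t^{-1}+1$ and $q(t)t^{-1}$ at once. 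Your closing heuristic paragraph describes this same mechanism correctly.

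However, the one formula you actually commit to is wrong, and the error is not just bookkeeping. Writing $t^{-1}=(y^{t^{-1}}t,\,xy^{t^{-1}}t)\sigma$, $w=(u,u)\sigma$, $t=(x^tyt^{-1},\,yt^{-1})\sigma$ and multiplying in the wreath product, the first coordinate of $t^{-1}wt$ is
\[
y^{t^{-1}}\cdot t\cdot u\cdot x^ty\cdot t^{-1},
\]
i.e.\ the factor $x^ty$ sits \emph{inside} the conjugation window, so it gets shifted: this coordinate equals $y^{t^{-1}}\bigl(u\,x^ty\bigr)^{t^{-1}}=y^{t^{-1}}u^{t^{-1}}x\,y^{t^{-1}}=u^{t^{-1}}x$, and likewise the second coordinate is $x\,u^{t^{-1}}$, giving $(x^{p(t)t^{-1}+1}y^{q(t)t^{-1}})^{(1)}\sigma$ as required. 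Your version $y^{t^{-1}}\,t\,u\,t^{-1}\,x^ty$ places $x^ty$ \emph{outside} the conjugation, producing $u^{t^{-1}}x^t\,$ (times a $y$-discrepancy) in one coordinate and $u^{t^{-1}}x$ in the other; since $x^t\neq x$, the two coordinates then disagree and the claimed collapse to $(\cdot)^{(1)}\sigma$ form does not go through. So before the write-up can be completed you need to redo the wreath-product multiplication with the correct ordering (the model is the paper's own computation of $x^t$ in Lemma~\ref{lem:xtyt}, where the factor $x^ty$ appears between $t\,y$ and $t^{-1}$), and then actually carry one of the four identities to the end rather than gesturing at two alternative routes.
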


\begin{proof}
We prove only equality~\eqref{eqn:pqt}. The proof of other equalities in the statement is almost identical. Using equality~\eqref{eqn:t}, the fact that conjugates of $x$ and $y$ by powers of $t$ commute, and that $x^2=y^2=1$, we compute:
\begin{multline*}
\left((x^{p(t)}y^{q(t)})^{(1)}\sigma\right)^t=\left((x^{p(t)}y^{q(t)})^{(1)}\sigma\right)^{(x^ty,y)(t^{-1})^{(1)}\sigma}=
\left((x^{p(t)}y^{q(t)})^{(1)}x^ty^2\sigma\right)^{(t^{-1})^{(1)}\sigma}\\=\left((x^{p(t)+t}y^{q(t)})^{(1)}\sigma\right)^{(t^{-1})^{(1)}\sigma}
=(x^{p(t)t^{-1}+1}y^{q(t)t^{-1}})^{(1)}\sigma.
\end{multline*}
\end{proof}

\begin{lemma}
\label{lem:xtn}
For each $n\geq 1$ we have the following equalities:
\begin{eqnarray}
x^{t^n}&=&\bigl(x^{1+t^{-1}+t^{-2}+\cdots+t^{-n+1}}\cdot y^{t^{-n}}\bigr)^{(1)}\sigma,\\
y^{t^n}&=&\bigl(x^{t^{-n}}\bigr)^{(1)},\\
x^{t^{-n}}&=&\bigl(x^{t+t^{2}+\cdots+t^{n}}\cdot y^{t^{n}}\bigr)^{(1)}\sigma,\\
y^{t^{-n}}&=&\bigl(x^{t^{n}}\bigr)^{(1)}.
\end{eqnarray}
\end{lemma}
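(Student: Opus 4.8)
The plan is to prove all four identities simultaneously by induction on $n$, using Lemma~\ref{lem:xpq} as the engine that converts a claim about $x^{t^{n}}$, $y^{t^{n}}$ into a claim about $x^{t^{n+1}}$, $y^{t^{n+1}}$ (and similarly for the negative powers). The base case $n=1$ is read off directly from the definitions: equation~\eqref{eqn:t} gives $t=(x^ty,y)(t^{-1})^{(1)}\sigma$, and expanding $x^{t}=t^{-1}xt$, $y^{t}=t^{-1}yt$ using equation~\eqref{eqn:t_inv}, the commutativity of the $t$-conjugates of $x$ and $y$ (which is legitimate by Lemma~\ref{lem:t_in_norm}, since they are all spherically homogeneous), and the relations $x^2=y^2=1$, yields $x^{t}=\bigl(x\cdot y^{t^{-1}}\bigr)^{(1)}\sigma$ and $y^{t}=\bigl(x^{t^{-1}}\bigr)^{(1)}$. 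These are exactly the first two displayed formulas at $n=1$ (the sum $1+t^{-1}+\cdots+t^{-n+1}$ collapsing to $1$). The last two base cases are obtained identically from equation~\eqref{eqn:t_inv}, using $x^{t^{-1}}=txt^{-1}$ and $y^{t^{-1}}=tyt^{-1}$.

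For the inductive step on the positive powers, I would write $x^{t^{n+1}}=\bigl(x^{t^{n}}\bigr)^{t}$ and $y^{t^{n+1}}=\bigl(y^{t^{n}}\bigr)^{t}$, substitute the inductive expressions $x^{t^n}=\bigl(x^{p(t)}y^{q(t)}\bigr)^{(1)}\sigma$ with $p(t)=1+t^{-1}+\cdots+t^{-n+1}$, $q(t)=t^{-n}$, and $y^{t^n}=\bigl(x^{t^{-n}}\bigr)^{(1)}$, and then apply the first and third identities of Lemma~\ref{lem:xpq} respectively. The first identity of Lemma~\ref{lem:xpq} sends the exponent pair $(p(t),q(t))$ to $(p(t)t^{-1}+1,\,q(t)t^{-1})$; since $p(t)t^{-1}+1 = 1+t^{-1}+t^{-2}+\cdots+t^{-n}$ and $q(t)t^{-1}=t^{-n-1}$, this is precisely the claimed formula for $x^{t^{n+1}}$. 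For $y^{t^{n+1}}$ one first needs to rewrite $x^{t^{-n}}$ in the form $\bigl(x^{p'(t)}y^{q'(t)}\bigr)^{(1)}\sigma$ — but that is exactly the third displayed identity at level $n$, which the simultaneous induction already supplies — and then apply the third identity of Lemma~\ref{lem:xpq}, which shifts the exponents by $t^{-1}$ without introducing a constant; tracking the exponents gives $y^{t^{n+1}}=\bigl(x^{t^{-n-1}}\bigr)^{(1)}$. The negative-power step is completely symmetric: $x^{t^{-n-1}}=\bigl(x^{t^{-n}}\bigr)^{t^{-1}}$ and $y^{t^{-n-1}}=\bigl(y^{t^{-n}}\bigr)^{t^{-1}}$, now fed into the second and fourth identities of Lemma~\ref{lem:xpq}, with the exponent bookkeeping $q(t)t+t$ producing the geometric sum $t+t^2+\cdots+t^{n+1}$.

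The only genuinely delicate point — and the place where care is required rather than difficulty — is that the four identities are interlocked: the recursion for $y^{t^{n+1}}$ consumes the $x^{t^{-n}}$ formula, and the recursion for $y^{t^{-n-1}}$ consumes the $x^{t^{n}}$ formula, so the induction must carry all four statements as a single package rather than proving the positive and negative families separately. As long as the inductive hypothesis is stated for all four at once, each step is a short computation in the group $\langle x^{t^i}, y^{t^j}\rangle$, which is abelian of exponent $2$ by Lemma~\ref{lem:t_in_norm} and the observation preceding~\eqref{eqn:notation_exp}, so one may freely rearrange and cancel the $t$-exponents exactly as if manipulating elements of the group ring $\mathbb{F}_2[t,t^{-1}]$. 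Consequently no real obstacle arises; the proof is a bookkeeping exercise, and I would present the $n=1$ base case in full and then exhibit one representative inductive step (say, the one for $x^{t^{n+1}}$) in detail, remarking that the other three are obtained by the same substitution-and-Lemma~\ref{lem:xpq} pattern.
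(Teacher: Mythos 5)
Your proposal is correct and follows essentially the same route as the paper, which proves the lemma by a one-line induction on $n$ from Lemma~\ref{lem:xpq}; the only cosmetic difference is that the paper bases the induction at $n=0$, where the four identities degenerate to the already-established $x=y^{(1)}\sigma$ and $y=x^{(1)}$, rather than computing the $n=1$ case afresh from~\eqref{eqn:t} and~\eqref{eqn:t_inv}. One small correction to your bookkeeping: for $y^{t^{n+1}}=\bigl((x^{t^{-n}})^{(1)}\bigr)^t$ you should not first substitute the wreath decomposition of $x^{t^{-n}}$ (that would produce a second-level decomposition to which Lemma~\ref{lem:xpq} no longer applies); the third identity of Lemma~\ref{lem:xpq} applies directly with $(p,q)=(t^{-n},0)$, so in fact each of the four identities inducts on itself alone and the interlocking you describe is illusory, although carrying all four statements through the induction is of course harmless.
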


\begin{proof}
Since $x^{t^0}=x=(x^0y^1)^{(1)}\sigma$ and $y^{t^0}=y=(x^1y^0)^{(1)}$, we immediately obtain the statement of the lemma by induction on $n$ from Lemma~\ref{lem:xpq}.
\end{proof}

For each $n\geq 1$ define
\[\phi_n(t):=1+t+t^2+\cdots+t^{n-1}.\]
For each polynomial $p(t)=\sum_{i=0}^ka_it^i\in\Z_2[t]$  define also
\[\psi_p(t)=\sum_{i=1}^ka_i\phi_i(t).\]
Note, that with a convention that $\deg 0=-1$, for each nonzero polynomial $p$:
\[\deg\psi_p=\deg p-1.\]

\begin{lemma}
\label{lem:xpyq_states}
For all pairs of polynomials $p(t),q(t)\in\Z_2[t]$
\begin{itemize}
\item the state of $x^{p(t)}y^{q(t)}$ at each vertex of the first level is $x^{\psi_p(t^{-1})+q(t^{-1})}y^{p(t^{-1})}$.
\item the state of $x^{p(t^{-1})}y^{q(t^{-1})}$ at each vertex of the first level is $x^{t\psi_p(t)+q(t)}y^{p(t)}$.
\end{itemize}
\end{lemma}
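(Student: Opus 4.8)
The plan is to prove both statements simultaneously by induction on $\deg p + \deg q$, using Lemma~\ref{lem:xtn} to handle the base cases (single powers $x^{t^n}$, $y^{t^n}$, $x^{t^{-n}}$, $y^{t^{-n}}$) and Lemma~\ref{lem:xpq} to assemble the general case. The key observation is that $x^{p(t)}y^{q(t)}$ is spherically homogeneous by Lemma~\ref{lem:t_in_norm} (all conjugates of $x,y$ by powers of $t$ lie in $\SHAut(X^*)$ and commute), so its states at the two vertices of the first level coincide; thus it suffices to compute the single state $\left(x^{p(t)}y^{q(t)}\right)|_0$. Write $p(t) = \sum_{i=0}^k a_i t^i$ and split off the constant term: $x^{p(t)} = x^{a_0} \cdot x^{p(t) - a_0}$, and similarly for $q$. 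The factor $x^{a_0}y^{b_0}$ with $a_0, b_0 \in \Z_2$ contributes the ``level-$0$'' data, while the remaining factors $x^{t \cdot p'(t)} y^{t \cdot q'(t)}$ (where $p' = \sigma(p)$, $q' = \sigma(q)$ are the shifts) have all exponents divisible by $t$.

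First I would record the multiplicativity of the state map: since all the elements $x^{t^i}, y^{t^j}$ commute and squaring is trivial, for any Laurent polynomials $p_1,p_2,q_1,q_2$ one has $\left(x^{p_1}y^{q_1}\cdot x^{p_2}y^{q_2}\right)|_0 = \left(x^{p_1+p_2}y^{q_1+q_2}\right)|_0 = \left(x^{p_1}y^{q_1}\right)|_0 \cdot \left(x^{p_2}y^{q_2}\right)|_0$, so the state-at-$0$ map is a homomorphism from $\langle x^{t^i}, y^{t^j}\rangle$ to itself that is additive in the exponent polynomials. Hence it suffices to verify the formula on the generators $x^{t^n}$ and $y^{t^n}$ for all $n \in \Z$, and then extend by additivity and the additivity of $\psi$: note $\psi_{p+q} = \psi_p + \psi_q$ directly from the definition, so the claimed output $x^{\psi_p(t^{-1}) + q(t^{-1})} y^{p(t^{-1})}$ is additive in $(p,q)$ exactly as required.

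So the heart of the proof reduces to checking the single-generator cases against Lemma~\ref{lem:xtn}. For $p(t) = t^n$, $q(t) = 0$ with $n \geq 1$: Lemma~\ref{lem:xtn} gives $x^{t^n} = \bigl(x^{1 + t^{-1} + \cdots + t^{-n+1}} y^{t^{-n}}\bigr)^{(1)}\sigma$, so its state at $0$ is $x^{1 + t^{-1} + \cdots + t^{-n+1}} y^{t^{-n}}$; on the other hand $\psi_{t^n}(t) = \phi_n(t) = 1 + t + \cdots + t^{n-1}$, so $\psi_{t^n}(t^{-1}) = 1 + t^{-1} + \cdots + t^{-n+1}$ and $p(t^{-1}) = t^{-n}$, matching exactly. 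The cases $q(t) = t^n$ (so $p = 0$, and we need $\psi_0 = 0$, state $= y^{t^{-n}} = x^{t^n}{}^{(1)}$... wait — here one reads off $x^{p(t^{-1})} = x^0 = 1$ is wrong; rather $y^{t^n} = (x^{t^{-n}})^{(1)}$ gives state $x^{t^{-n}}$, and the formula predicts $x^{\psi_0(t^{-1}) + t^{-n}} y^{0} = x^{t^{-n}}$, which matches), and the negative-power cases $n \leq -1$ (using the third and fourth equalities of Lemma~\ref{lem:xtn} together with $\psi$ applied to $t^{-n}$, being careful that $\deg$ and the index shift in $\psi_p$ behave correctly — here I would double-check the edge case $n=0$, where $x^{t^0} = x = (y)^{(1)}\sigma$ has state $y = x^{\psi_1(t^{-1}) + 0}y^{1}$ since $\psi_1 = 0$) are all verified the same way. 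The second bulleted formula follows by the symmetric computation, substituting $t \leftrightarrow t^{-1}$ throughout and using the other half of Lemma~\ref{lem:xtn}.

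The main obstacle I anticipate is purely bookkeeping: making sure the shift/index conventions in the definition $\psi_p(t) = \sum_{i=1}^k a_i \phi_i(t)$ line up with the telescoping exponents $1 + t^{-1} + \cdots + t^{-n+1}$ coming out of Lemma~\ref{lem:xtn}, and correctly handling the constant term $a_0$ of $p$ (which is invisible to $\psi_p$, consistent with $x^{a_0}$ being its own state at $0$ up to the $\sigma$, i.e.\ $x^{a_0}$'s contribution to the first-level state is $y^{a_0}$, matching the $y^{p(t^{-1})}$ term's constant). There is no genuine conceptual difficulty: once the homomorphism property and the additivity of $\psi$ are in hand, the whole statement collapses to the four generator identities already proved in Lemma~\ref{lem:xtn}, plus the trivial base case $n = 0$.
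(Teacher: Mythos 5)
Your proposal is correct and follows essentially the same route as the paper, which derives the lemma directly from Lemma~\ref{lem:xtn} together with the commutativity of the conjugates of $x$ and $y$ by powers of $t$; you have simply spelled out the multiplicativity of the state-at-a-first-level-vertex map (valid because all these elements are spherically homogeneous), the linearity of $p\mapsto\psi_p$, and the generator-by-generator verification that the paper leaves implicit.
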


\begin{proof}
The result immediately follows from Lemma~\ref{lem:xtn} and the fact that the conjugates of $x$ and $y$ by powers of $t$ commute.
\end{proof}

To simplify notation, for polynomials $p(t),q(t)\in\Z_2[t]$ we denote $x^{p(t)}y^{q(t)}$ by $(p,q)^+$ and $x^{p(t^{-1})}y^{q(t^{-1})}$ by $(p,q)^-$. Also for a spherically homogeneous automorphism $g$ we will denote by $g\rightarrow g|_0$ the operation of passing to the state at a vertex of the first level (it is well defined because states at all vertices of the first level coincide). With these notations established, Lemma~\ref{lem:xpyq_states} can be reformulated as follows:
\begin{eqnarray}
\label{eqn:pq1}(p,q)^+&\rightarrow&(\psi_p+q,p)^-,\\
\label{eqn:pq2}(p,q)^-&\rightarrow&(t\psi_p+q,p)^+.
\end{eqnarray}

The next two lemmas constitute the technical heart of the proof of Theorem~\ref{thm:structure_of_G}.

\begin{lemma}
\label{lem:same_deg}
If there is a pair $(p(t),q(t))$ of polynomials in $\Z_2[t]$ such that $x^{p(t)}y^{q(t)}$ is trivial, then there is a pair $(p'(t),q'(t))$ of polynomials with $x^{p'(t)}y^{q'(t)}=1$ and $\deg p'=\deg q'=\max\{\deg p,\deg q\}$.
\end{lemma}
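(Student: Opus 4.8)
The plan is to start from a hypothetical pair $(p,q)$ with $x^{p(t)}y^{q(t)}=1$ and repeatedly pass to states at a vertex of the first level, using the recursions~\eqref{eqn:pq1}--\eqref{eqn:pq2}, until the degrees of the two polynomials agree. Writing elements in the $(p,q)^{\pm}$ notation, one step has the shape $(p,q)^+\rightarrow(\psi_p+q,p)^-$ and $(p,q)^-\rightarrow(t\psi_p+q,p)^+$; since a nontrivial spherically homogeneous automorphism has nontrivial states, triviality of $x^{p}y^{q}$ is equivalent to triviality of any of its first-level states, so it suffices to track how $(\deg p,\deg q)$ evolves under these maps. The key arithmetic fact, already recorded before Lemma~\ref{lem:xpyq_states}, is that $\deg\psi_p=\deg p-1$ (with $\deg 0=-1$); hence $\deg(t\psi_p)=\deg p$, and in the second component of each step the degree simply becomes $\deg p$ — that is, the ``$y$-degree after the step equals the $x$-degree before the step.''

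First I would dispose of the case $\deg p=\deg q$, where there is nothing to prove. So assume $\deg p\neq\deg q$ and set $m=\max\{\deg p,\deg q\}$; I want to show some iterate of the state map produces a pair whose two degrees are both equal to $m$ (after which, by Lemma~\ref{lem:same_deg}'s hypothesis being preserved, we stop). Consider the two subcases. If $\deg q>\deg p$, then in $(\psi_p+q,p)^{-}$ (or $(t\psi_p+q,p)^{+}$) the first component has degree $\max\{\deg\psi_p,\deg q\}=\deg q=m$ (there can be no cancellation since $\deg\psi_p=\deg p-1<\deg q$, and similarly $\deg(t\psi_p)=\deg p<\deg q$), while the second component has degree $\deg p<m$. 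Thus after one step the larger degree is still $m$ and sits in the first coordinate, and the smaller degree has possibly changed but is still $<m$; iterating, the first coordinate keeps degree $m$ while the second coordinate's degree cycles through $\deg p$ of the previous pair. If instead $\deg p>\deg q$, then after one step the roles swap: the new first coordinate has degree $\max\{\deg\psi_p,\deg q\}=\deg p-1$ or $\max\{\deg(t\psi_p),\deg q\}=\deg p=m$ depending on the parity of the step, and the new second coordinate has degree $\deg p=m$. So I would argue that, alternating the two recursions, the "large" degree $m$ is preserved throughout while the "small" degree is forced upward: after a bounded number of steps the small coordinate inherits the value $m$ from the large coordinate (via the "$y$-degree becomes old $x$-degree" mechanism), and at that moment both degrees equal $m$.

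The main obstacle I anticipate is bookkeeping the possible cancellation and the alternation of the $+/-$ types: one must check that the leading term of the degree-$m$ coordinate is genuinely nonzero at every step (no accidental cancellation with $\psi_p$ or $t\psi_p$ or $q$ of strictly smaller degree), and that the "$t\psi_p$ versus $\psi_p$" discrepancy between the two recursions does not let the maximum degree drift below $m$. Concretely I would maintain the invariant: \emph{after each step the pair has degrees $(m, *)$ or $(*, m)$ with $*\le m$, and the coordinate of degree $m$ has a nonzero leading coefficient}; the base step ($\deg p\neq\deg q$) establishes it and the computation above propagates it. Once the invariant yields a pair with $*=m$, set $(p',q')$ to be that pair (the state after finitely many first-level passages), which is trivial iff the original element is, and has $\deg p'=\deg q'=m=\max\{\deg p,\deg q\}$, completing the proof.
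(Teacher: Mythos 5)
Your reduction works in the subcase $\deg q>\deg p$ (this is exactly the paper's Case I: two passages to first-level states give a pair with both degrees equal to $m$), but it breaks down in the subcase $\deg p>\deg q$, and that is where the real content of the lemma lies. Track your own invariant there. Starting from a $+$ pair with degree profile $(m,<m)$, one step gives the $-$ pair $(\psi_p+q,p)^-$, whose first coordinate has degree at most $m-1$ because $\deg\psi_p=\deg p-1<m$, and whose second coordinate has degree $m$. Writing that pair as $(P,Q)^-$ with $\deg P\le m-1$ and $\deg Q=m$, the next step gives $(t\psi_P+Q,P)^+$, whose first coordinate has degree $m$ (since $\deg(t\psi_P)=\deg P\le m-1$ is dominated by $\deg Q=m$) and whose second coordinate has degree at most $m-1$. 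By induction, every $+$ pair in the orbit has profile $(m,<m)$ and every $-$ pair has profile $(<m,m)$; the configuration $(m,m)$ is never reached, no matter how many states you take. The ``$y$-degree becomes the old $x$-degree'' mechanism only closes the gap if you can reach a $-$ pair whose \emph{first} coordinate carries degree $m$, and iteration of the state map alone never produces one from this starting position.

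The paper's proof supplies the missing ingredient: conjugation by the involution $a$, using $t^a=t^{-1}$, $x^a=x$, $y^a=y^{t^{-1}}$, which turns $(p,q)^+$ into $(p,t\cdot q)^-$ --- a $-$ pair whose first coordinate does carry the top degree. When $\deg p=\deg q+1$ the subsequent state is $(t\psi_p+t q,p)^+$, and the leading terms of $t\psi_p$ and $tq$ (both of degree $\deg p$, both with leading coefficient $1$ over $\Z_2$) cancel, dropping the first coordinate's degree below $m$ and landing in the favorable Case I; the remaining subcase $\deg p>\deg q+1$ is first pushed into $\deg p=\deg q+1$ by two state passages. So the cancellation you list as a bookkeeping obstacle to be ruled out is in fact the engine of the argument, and it has to be engineered by leaving the orbit of the state map and using the outer involution $a$. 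Without that step your proof cannot close.
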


\begin{proof}
Suppose $(p,q)$ is such a pair, i.e. $x^{p(t)}y^{q(t)}=(p,q)^+=1$. It is straightforward to check that $x^{p(t)}y^{q(t)}\neq 1$ if $\max\{\deg p,\deg q\}<2$, so we can assume that $\max\{\deg p,\deg q\}\geq 2$. Consider three cases.

\noindent \textbf{Case I.} $\deg p<\deg q$.\\
In this case using~\eqref{eqn:pq1} and~\eqref{eqn:pq2} we calculate:
\[(p,q)^+\rightarrow(\psi_p+q,p)^-\rightarrow(t\cdot\psi_{\psi_p+q}+p,\psi_p+q)^+\]
Since for each nonzero polynomial $r$ we have $\deg\psi_r=\deg r-1$ and for zero polynomial $\deg\psi_0=\deg 0$, we have
\[\deg\psi_p\leq\deg p<\deg q\ \Rightarrow\ \deg(\psi_p+q)=\deg q\]
and therefore
\[\deg\psi_{\psi_p+q}=\deg q-1\ \Rightarrow\ \deg(t\cdot\psi_{\psi_p+q}+p)=\deg q.\]

\noindent \textbf{Case II.} $\deg p=\deg q+1\geq 2$.\\
First we observe that $t^a=(ac)^a=ca=t^{-1}$, $x^a=x$, and $y^a=y^{t^{-1}}$. Therefore, for each pair of polynomials $p(t),q(t)\in\Z_2[t]$ we have
\[\left(x^{p(t)}y^{q(t)}\right)^a=x^{p(t^{-1})}y^{t^{-1}q(t^{-1})}.\]
In other words, conjugation of $(p,q)^+$ by $a$ produces $(p,t\cdot q)^-$, which also corresponds to the identity in $\G$. Passing to the first level state now yields:
\[(p,t\cdot q)^-\rightarrow (t\cdot\psi_{p}+t\cdot q,p)^+,\]
where $\deg(t\cdot \psi_{p}+t\cdot q)<\deg p$ since both $t\cdot\psi_{p}$ and $t\cdot q$ are polynomials of degree $\deg p\geq 2$, so that the highest term cancels in their sum. Thus, we arrived to the previous case.

\noindent \textbf{Case III.} $\deg p>\deg q+1$.\\
In this case we pass to the state on the second level:
\[(p,q)^+\rightarrow (\psi_p+q,p)^-\rightarrow \left(t\cdot\psi_{\psi_p+q}+p,\psi_p+q\right)^+.\]
Now observe that $\deg(\psi_p+q)=\deg p-1$, so $\deg\psi_{\psi_p+q}=\deg p-2$ (since we assumed $\deg p\geq2$) and thus $\deg(t\cdot\psi_{\psi_p+q}+p)=\deg p$. Therefore we arrive to the situation described in Case II.
\end{proof}

\begin{lemma}
\label{lem:xpyq}
For each pair $(p(t),q(t))$ of polynomials in $\Z_2[t]$ the automorphism $x^{p(t)}y^{q(t)}$ is nontrivial.
\end{lemma}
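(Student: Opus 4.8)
The plan is to derive a contradiction from a hypothetical nontrivial relation $x^{p(t)}y^{q(t)} = 1$ by a descent argument on the degrees of $p$ and $q$, using the two ``state'' rules \eqref{eqn:pq1} and \eqref{eqn:pq2} together with the normalization already achieved in Lemma~\ref{lem:same_deg}. First I would dispose of the base cases: when $\max\{\deg p, \deg q\} < 2$ the element $x^{p(t)}y^{q(t)}$ is a short product of $x, y, x^t, y^t, x^{t^{-1}}, y^{t^{-1}}$ and one checks directly (e.g.\ by evaluating the action on the first two or three levels of the tree, using the explicit decompositions in Lemma~\ref{lem:xtn}) that no such product is trivial; this is routine and finite. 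So assume for contradiction that some nontrivial relation exists with $\max\{\deg p,\deg q\} \geq 2$, and by Lemma~\ref{lem:same_deg} pick one with $\deg p' = \deg q' = n := \max\{\deg p, \deg q\} \geq 2$; rename $(p',q')$ as $(p,q)$.

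The core of the descent is to apply the passage-to-first-level-state operator (which preserves triviality, since a trivial automorphism has trivial states) and track what happens to the pair of degrees. Starting from $(p,q)^+ = 1$ with $\deg p = \deg q = n$, formula \eqref{eqn:pq1} gives $(p,q)^+ \rightarrow (\psi_p + q, p)^-$, which is also trivial. Here $\deg\psi_p = n-1 < n = \deg q$, so $\deg(\psi_p + q) = n$ and $\deg p = n$: the pair of degrees is again $(n,n)$, but now of the ``$-$'' type. Applying \eqref{eqn:pq2} to this, $(\psi_p+q, p)^- \rightarrow (t\psi_{\psi_p+q} + p,\ \psi_p+q)^+$, where $\deg(t\psi_{\psi_p+q}) = (n-1-1)+1 = n-1$... wait — more care is needed, and this is exactly where I expect the real work to lie. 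The naive expectation is that two applications of the state map return a pair $(p_1, q_1)^+$ with strictly smaller degrees, giving an infinite strictly decreasing sequence of nonnegative integers, the desired contradiction. I would therefore compute the degree of $t\psi_{\psi_p+q} + p$ exactly: since $\deg(\psi_p+q) = n$, we have $\deg\psi_{\psi_p+q} = n-1$, so $\deg(t\psi_{\psi_p+q}) = n$, and the leading terms of $t\psi_{\psi_p+q}$ and $p$ need not cancel. When they do \emph{not} cancel the degree does not drop, so the clean descent fails and one must instead track a finer invariant — for instance the pair $(\deg p, \deg q)$ ordered lexicographically, or $\deg p - \deg q$ — and possibly also use the conjugation-by-$a$ trick from Case~II of Lemma~\ref{lem:same_deg} (which sends $(p,q)^+$ to $(p, tq)^-$) to force a cancellation before passing to the next level. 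This bookkeeping — showing that some explicit monovariant on $(p,q)$ strictly decreases after a bounded number of state-passages and/or $a$-conjugations — is the main obstacle; everything else is formal.

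Once the monovariant is in hand, the proof concludes by well-ordering: the monovariant takes values in $\N$ (or $\N \times \N$ with the lexicographic well-order), and an infinite strictly decreasing sequence is impossible, so no nontrivial relation $x^{p(t)}y^{q(t)} = 1$ can exist. I would structure the write-up as: (i) base cases $\max\{\deg p,\deg q\}<2$; (ii) reduction to $\deg p = \deg q = n \geq 2$ via Lemma~\ref{lem:same_deg}; (iii) the key computation showing the chosen invariant drops, organized by the same three cases as Lemma~\ref{lem:same_deg} (comparing $\deg p$ and $\deg q$), reusing \eqref{eqn:pq1}, \eqref{eqn:pq2}, and the identity $(x^{p(t)}y^{q(t)})^a = x^{p(t^{-1})}y^{t^{-1}q(t^{-1})}$; (iv) the well-ordering contradiction. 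A corollary to record immediately afterwards is that $\langle x, y, t\rangle \cong \Z_2^2 \wr \Z$, since Lemma~\ref{lem:xpyq} says precisely that the base group $\langle x^{t^i}, y^{t^i} : i \in \Z\rangle$ is free abelian of the expected rank over each coset, i.e.\ the map $\Z_2^2 \wr \Z \to \langle x,y,t\rangle$ sending the wreath generators to $x, y, t$ is injective.
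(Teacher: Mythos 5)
There is a genuine gap here: you correctly set up the minimal-counterexample framework, the base case, and the reduction via Lemma~\ref{lem:same_deg} to $\deg p=\deg q=n\geq 2$, but you stop exactly at the point where the actual work begins, and the ``monovariant'' you hope for does not exist in the naive form you describe. Two specific problems. First, your worry that the leading terms of $t\psi_{\psi_p+q}$ and $p$ ``need not cancel'' is unfounded: over $\Z_2$ any two polynomials of the same degree have leading coefficient $1$, so their sum always drops in degree. Second, and more seriously, even with that cancellation no descent on $\max\{\deg p,\deg q\}$ (or on the lexicographic pair) goes through: tracking the degrees through \eqref{eqn:pq1} and \eqref{eqn:pq2} from $(n,n)$ gives $(n,n)\to(n,n)\to(<n,\,n)\to(n,\,<n)\to(n,n)$, i.e.\ after four state-passages you are back to a pair of type ``$+$'' with both degrees equal to $n$. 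The degree never drops below $n$, so the well-ordering contradiction you invoke at the end has nothing to bite on.

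The paper's proof closes this gap with an idea that is absent from your proposal. After four state-passages one obtains a second trivial element $\bigl(t\psi_{p_2}+p_1,\,p_2\bigr)^+$ with both degrees equal to $n$; multiplying it by the original trivial element $(p,q)^+$ yields the trivial element $\bigl(p+t\psi_{p_2}+p_1,\,q+p_2\bigr)^+$ whose components both have degree strictly less than $n$ (again by cancellation of leading terms over $\Z_2$). Minimality of the counterexample then forces both components to be the zero polynomial, and the equation $q+p_2=0$ unwinds to $\psi_{p_1}=\psi_p$, which is impossible because $\deg\psi_{p_1}=\deg p_1-1<n-1=\deg\psi_p$. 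So the contradiction comes not from a decreasing invariant but from the polynomial identities forced by minimality. If you want to salvage your write-up, replace step (iii)--(iv) of your outline by this ``multiply two relations of the same top degree to produce a smaller one'' argument and the final $\psi_{p_1}=\psi_p$ contradiction; as written, your proposal does not prove the lemma.
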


\begin{proof}
Assume on the contrary that the statement is false. Choose a pair of polynomials $p,q$ such that $(p,q)^+$ corresponds to the identity in $\G$ and $\max\{\deg p,\deg q\}$ is the smallest. Moreover, by Lemma~\ref{lem:same_deg} without loss of generality we can assume that $p$ and $q$ are of the same degree that is greater than or equal 2. Passing to the second level state yields:
\begin{equation*}
(p,q)^+\rightarrow(\psi_p+q,p)^-\rightarrow \left(t\cdot\psi_{\psi_p+q}+p,\psi_p+q\right)^+.
\end{equation*}
Since both $t\cdot\psi_{\psi_p+q}$ and $p$ have degree $\deg p$, their sum $p_1:=t\cdot\psi_{\psi_p+q}+p$ has degree less than $\deg p$, while $\deg(\psi_p+q)=\deg p$. On the next level we obtain:
\begin{equation*}
\left(t\cdot\psi_{\psi_p+q}+p,\psi_p+q\right)^+\rightarrow \left(\psi_{p_1}+\psi_p+q,p_1\right)^-,
\end{equation*}
where for $p_2:=\psi_{p_1}+\psi_p+q$ we have $\deg p_2=\deg p$ (since degrees of $\psi_{p_1}$ and $\psi_{p}$ are less than $\deg q=\deg p$). Finally, after passing to the state on the first level for the last time we obtain:
\begin{equation*}
\left(p_2,p_1\right)^-\rightarrow\left(t\cdot \psi_{p_2}+p_1,p_2\right)^+,
\end{equation*}
where $\deg(t\cdot \psi_{p_2}+p_1)=\deg p$ (since $\deg p_1<\deg p$), and $\deg p_2=\deg p$. However, in this case $(p+t\cdot \psi_{p_2}+p_1, q+p_2)^+$ will also represent the identity element in $\G$ with $\deg(p+t\cdot \psi_{p_2}+p_1)<\deg p$ and $\deg(q+p_2)<\deg p$, contradicting to the minimality assumption on the degree of $p$ and $q$, unless $p+t\cdot \psi_{p_2}+p_1=0$ and $q+p_2=0$. Therefore, we must have
\[p_2=\psi_{p_1}+\psi_p+q=q\]
or
\[\psi_{p_1}=\psi_{p},\]
which is impossible since $\deg\psi_{p_1}=\deg{p_1}-1<\deg p-1=\deg\psi_p$. Contradiction.
\end{proof}

\begin{lemma}
\label{lem:xyt}
The group $\langle x,y,t\rangle$ is isomorphic to the rank 2 lamplighter group $\Z_2^2\wr\Z=\langle x,y\rangle\wr\langle t\rangle$.
\end{lemma}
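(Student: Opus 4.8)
The plan is to build the obvious surjection $\Phi\colon\Z_2^2\wr\Z\twoheadrightarrow\langle x,y,t\rangle$ and prove it is injective. To see $\Phi$ is well defined, send the canonical generator of $\Z$ to $t$ and the generators of the $0$-th copy of $\Z_2^2$ in the base group to $x$ and $y$. The defining relations of $\Z_2^2\wr\Z$ on these generators are $x^2=y^2=1$ and $[x,y]=1$ (which hold since $\langle x,y\rangle\cong\Z_2^2$) together with $[x^{t^i},x^{t^j}]=[x^{t^i},y^{t^j}]=[y^{t^i},y^{t^j}]=1$ for all $i,j\in\Z$; the latter hold because $x,y\in\SHAut(X^*)$ and, by Lemma~\ref{lem:t_in_norm}, $t$ normalizes $\SHAut(X^*)$, so every $x^{t^i},y^{t^j}$ lies in $\SHAut(X^*)$, which is abelian for the binary tree. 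Surjectivity is clear, and $\Phi$ carries the base group $B=\bigoplus_{\Z}\Z_2^2$ onto the set of elements $x^{p(t)}y^{q(t)}$ with $p,q\in\Z_2[t,t^{-1}]$.

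Next I would show $\Phi|_B$ is injective. A nontrivial element of $B$ maps to some $x^{p(t)}y^{q(t)}$ with $(p,q)\neq(0,0)$; conjugating by a sufficiently large power of $t$ (an inner automorphism of $\langle x,y,t\rangle$, which under $\Phi$ simply multiplies $p$ and $q$ by a power of $t$, by Lemma~\ref{lem:xpq}) reduces to the case $p,q\in\Z_2[t]$, and then Lemma~\ref{lem:xpyq} gives $x^{p(t)}y^{q(t)}\neq1$. Hence $\ker\Phi\cap B=1$. Since $\ker\Phi$ is normal in $\Z_2^2\wr\Z$ and meets $B$ trivially, it embeds into $(\Z_2^2\wr\Z)/B\cong\Z$ and is therefore trivial or infinite cyclic. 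If $\ker\Phi=\langle g\rangle$ with $g=w_0t^m$, $w_0\in B$, $m\geq1$, then $g^t\in\ker\Phi$ too, so $g^tg^{-1}=w_0^tw_0^{-1}\in B\cap\ker\Phi=1$; the only shift-invariant finitely supported element of $B$ is $0$, so $w_0=1$ and $\ker\Phi=\langle t^m\rangle$, i.e. $t^m=1$ in $\langle x,y,t\rangle$.

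It remains to rule this out by showing $t$ has infinite order. By the corollary expressing $t=\pi_{A,\bb}$ and Proposition~\ref{prop:affine_basic}, one has $t^n=\pi_{A^n,\ast}$, so $t^n=1$ forces $A^n=I$. Write $A=I+N$ with $N$ strictly upper triangular over $\Z_2$. From the explicit description of the rows of $A$ one reads off that every superdiagonal entry $N_{i,i+1}$ equals $1$ (the first entry of $(1,0)^\infty$ in the odd rows and of $(1,1,1,0)^\infty$ in the even rows is $1$). Consequently, for every $k$ the only strictly increasing length-$k$ chain from $i$ to $i+k$ is $i,i+1,\dots,i+k$, so $(N^k)_{i,i+k}=1$ and $N$ is not nilpotent. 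Since $\binom{2^m}{j}\equiv0\pmod2$ for $0<j<2^m$, we get $A^{2^m}=(I+N)^{2^m}=I+N^{2^m}\neq I$, so $A$ — and hence $t$ — has infinite order, contradicting $t^m=1$. Therefore $\ker\Phi=1$ and $\Phi$ is an isomorphism.

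The routine parts are the wreath-product bookkeeping in the first two paragraphs, which rest entirely on Lemmas~\ref{lem:t_in_norm} and~\ref{lem:xpyq}. The one genuinely new input is the infinitude of the order of $t$; here the main point is the simple observation that the matrix $A$ attached to $t$ is unipotent with an all-ones superdiagonal, hence $N=A-I$ is non-nilpotent. (One could instead cite~\cite{klimann_ps:orbit_automata} for the existence of infinite-order elements in $\G$ and check that $t=ac$ is one of them, but the matrix argument is self-contained and shorter.)
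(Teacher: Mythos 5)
Your proof is correct, and its mathematical core coincides with the paper's: any nontrivial relation in the base group, conjugated by a sufficiently large power of $t$, yields a trivial element of the form $x^{p(t)}y^{q(t)}$ with $p,q\in\Z_2[t]$, contradicting Lemma~\ref{lem:xpyq}. Where you differ is in the surrounding bookkeeping. The paper compresses everything into the assertion that it suffices to show the conjugates $x^{t^n},y^{t^m}$ generate a free $(\Z_2^2)^\infty$; you instead set up the surjection $\Phi$ from the abstract wreath product, observe $\ker\Phi\cap B=1$, and then rule out $\ker\Phi=\langle t^m\rangle$ by a separate, self-contained proof that $t$ has infinite order via the matrix $A$ (unipotent with all-ones superdiagonal, so $N=A-I$ is not nilpotent). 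That matrix argument is valid but is more machinery than is needed: once $\Phi|_B$ is known to be injective, conjugation by $\Phi(t)^m$ moves $\Phi(x)$ to $\Phi(x^{t^m})\neq\Phi(x)$ for $m\neq 0$, so $\Phi(t)^m$ is neither trivial nor contained in the abelian group $\Phi(B)$ --- which is the one-line justification implicit in the paper's ``it is enough to show.'' One small imprecision in your extra step: $A^{2^m}=I+N^{2^m}\neq I$ only excludes orders that are powers of $2$; to conclude infinite order you should add that a finite-order unipotent matrix over $\Z_2$ necessarily has $2$-power order (if $(I+M)^q=I$ with $q$ odd and $M$ strictly upper triangular, then $M\bigl(qI+\binom{q}{2}M+\cdots\bigr)=0$ with the second factor invertible, forcing $M=0$). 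This is a one-line patch, not a flaw in the strategy.
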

\begin{proof}
It is enough to show that the elements $x^{t^n},y^{t^m}$, $n,m\in\Z$ generate $(\Z_2^2)^\infty$. Any possible relation must be of the form
\[1=x^{t^{n_1}}x^{t^{n_2}}\cdots x^{t^{n_r}}\cdot y^{t^{m_1}}y^{t^{m_2}}\cdots y^{t^{m_s}}=x^{t^{n_1}+t^{n_2}+\cdots+t^{n_r}}y^{t^{m_1}+t^{m_2}+\cdots+t^{m_s}},\]
for $n_i,m_j\in\Z$. Conjugating this relation by sufficiently large power of $t$ we obtain $x^{p(t)}y^{q(t)}=1$ for some $p,q\in\Z_2[t]$, which contradicts Lemma~\ref{lem:xpyq}.
\end{proof}

\begin{theorem}
\label{thm:structure_of_G}
The group $\G$ is isomorphic to the index 2 extension of the rank 2 lamplighter group:
\[G\cong \bigl(\Z_2^2\wr\Z\bigr)\rtimes\Z_2=\bigl(\langle x,y\rangle\wr\langle t\rangle\bigr)\rtimes\langle a\rangle,\]
where the action of $a$ on $x,y,t$ is defined as follows: $x^a=x$, $y^a=y^{t^{-1}}$, $t^a=t^{-1}$.
\end{theorem}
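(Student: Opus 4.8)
The plan is to realize $\G$ as an internal semidirect product $H\rtimes\langle a\rangle$, where $H:=\langle x,y,t\rangle$ is the copy of $\Z_2^2\wr\Z$ furnished by Lemma~\ref{lem:xyt}. The first, routine, step is to note that $\G=\langle x,y,t,a\rangle$: from $x=ab$, $t=ac$, $y=cd$ and $a^2=b^2=c^2=d^2=1$ one reads off $b=ax$, $c=at$, $d=aty$, so each original generator lies in $\langle a,x,y,t\rangle$, while conversely $x,y,t\in\G$.

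Next I would verify that $\langle a\rangle$ normalizes $H$ and determine the action. Using only that the four generators are involutions and that $\langle a,b\rangle$ and $\langle c,d\rangle$ are Klein four-groups (so that $a,b$ commute and $c,d$ commute), short computations give
\[
t^a=a(ac)a=ca=(ac)^{-1}=t^{-1},\qquad x^a=a(ab)a=ba=ab=x,
\]
\[
y^a=a(cd)a=acda=adca=(ac)(cd)(ca)=tyt^{-1}=y^{t^{-1}},
\]
where the equality $acda=adca$ uses $cd=dc$; these are precisely the identities already recorded in the proof of Lemma~\ref{lem:same_deg}. Since $x^a,y^a,t^a\in H$ and $a^2=1$, the subgroup $H$ is normal in $\G=H\langle a\rangle$, the index $[\G:H]$ divides $2$, and conjugation by $a$ induces on $H$ the automorphism determined on generators by $x\mapsto x$, $y\mapsto y^{t^{-1}}$, $t\mapsto t^{-1}$.

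The one step carrying actual content is to rule out degeneration, that is, to show $a\notin H$. I would argue by contradiction. If $a\in H$, then $\G=\langle H,a\rangle=H\cong\Z_2^2\wr\Z$ by Lemma~\ref{lem:xyt}, with $t$ corresponding to a generator of the acting copy of $\Z$. Applying the canonical surjection $\Z_2^2\wr\Z\twoheadrightarrow\Z$ with kernel the base group $\bigoplus_{\Z}\Z_2^2$ to the relation $t^a=t^{-1}$ (which holds in $\G$ by the computation above) would force $1=-1$ in $\Z$, since the image in $\Z$ is a conjugacy invariant. Hence $a\notin H$, so $[\G:H]=2$ and $H\cap\langle a\rangle=1$.

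Putting the pieces together: $H\trianglelefteq\G=H\langle a\rangle$ with $\langle a\rangle\cong\Z_2$ and $H\cap\langle a\rangle=1$ exhibits $\G$ as the internal semidirect product $H\rtimes\langle a\rangle$, and, identifying $H$ with $\Z_2^2\wr\Z=\langle x,y\rangle\wr\langle t\rangle$ via Lemma~\ref{lem:xyt} and using the action found above, this gives $\G\cong(\Z_2^2\wr\Z)\rtimes\Z_2$ with $x^a=x$, $y^a=y^{t^{-1}}$, $t^a=t^{-1}$. I expect the non-degeneracy step, $a\notin H$, to be the crux; the rest is mechanical manipulation of the relations $a^2=b^2=c^2=d^2=1$ together with the two Klein four-subgroups.
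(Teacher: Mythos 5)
Your proof is correct and follows the same route as the paper's, which simply cites Lemma~\ref{lem:xyt} together with the conjugation relations $x^a=x$, $y^a=y^{t^{-1}}$, $t^a=t^{-1}$ (recorded in Case~II of the proof of Lemma~\ref{lem:same_deg}) and the assertion that $a$ is an involution not belonging to $\langle x,y,t\rangle$. Your abelianization argument for the non-degeneracy step $a\notin\langle x,y,t\rangle$ --- that otherwise $t$ would be conjugate to $t^{-1}$ inside $\Z_2^2\wr\Z$, contradicting the conjugacy-invariance of the image in the quotient $\Z$ --- correctly supplies the one detail the paper leaves unjustified.
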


\begin{proof}
Follows immediately from Lemma~\ref{lem:xyt} and the fact that $a$ is an involution not belonging to $\langle x,y,t\rangle$.
\end{proof}

\section{State-closed actions of the lamplighter group on the binary tree}
\label{sec:state-closed_binary}

In the Introduction we observed that in all known faithful state-closed representations of lamplighter type groups on the rooted trees the base group is represented by spherically homogeneous automorphisms. In this section we show that modulo conjugation, this is always the case for the lamplighter group $\Z_2\wr\Z$ acting faithfully as an automaton (or state-closed) group on the binary tree. We start by recalling the technique of virtual endomorphisms used, in particular, to study state-closed representations of abelian, nilpotent, and lamplighter type groups in~\cite{nekrash_s:12endomorph},~\cite{berlatto_s:virtual_endom_nilpotent07}, and~\cite{sidki_d:lamplighter}, respectively.

State-closed representations of a group $G$ on $d$-ary tree can be obtained from \emph{similarity pairs} $(H,f)$, where $H$ is a subgroup of index $d$ in $G$ and $f\colon H\to G$ is a homomorphism, called~\emph{virtual endomorphism} of $G$. Each similarity pair $(H,f)$ defines a representation $\phi\colon G\to\Aut(T_d)$ constructed as follows~\cite{nekrash_s:12endomorph}. As before, we will identify $T_d$ with the set $X^*$ of all finite words over the alphabet $X=\{0,1,\ldots,d-1\}$. Let $T=\{e, t_2,\ldots,t_{d-1}\}$ be a right transversal of $H$ in $G$ and $\nu\colon G\to\Sym(T)$ be the permutational
representation of $G$ on $T$. Then for each $g\in G$ we define $\phi(g)$ recursively via the wreath recursion as
\begin{equation}
\label{eqn:virtual_end_rep}
\phi(g)=\bigl(\phi\left(f(h_i)\right)\mid 0\leq i\leq d-1\bigr)\nu(g),
\end{equation}
where $h_i$ are the Schreier elements of $H$ defined by
\[h_i = (t_ig) (t_j)^{-1},\quad t_j = \bigl(\nu(g)\bigr)(t_i).\]

The image $\phi(G)$ is a state-closed subgroup of $\Aut(T_d)$, and the kernel of $\phi$, called the \emph{$f$-core} of $H$, is the largest subgroup $K$ of $H$
which is normal in $G$ and $f$-invariant (in the sense $f(K)<K$); when the kernel is trivial, $f$ and the similarity pair $(H, f)$ are said to be \emph{simple}. In this (and only this) case the representation $\phi$ is faithful.

Let now $G=\Z_2\wr\Z=\langle a\rangle\wr\langle x\rangle$ be the lamplighter group, where $a$ and $x$ denote the standard generators of order 2 and infinity, correspondingly. Let $A$ be the normal closure of $a$ in $G$. Then, since the conjugates of $a$ by powers of $x$ commute, as in~\eqref{eqn:notation_exp}, we can write each element of $A$ as
\begin{equation*}
a^{x^{i_1}+x^{i_2}+\cdots+x^{i_n}}:=a^{x^{i_1}}a^{x^{i_2}}\cdots a^{x^{i_n}}.
\end{equation*}
In particular, $a^{p(x)}\in A$ is defined for each Laurent polynomial $p(x)\in\Z_2[x,x^{-1}]$.

\begin{theorem}
\label{thm:state-closed_lamplighter}
Each state-closed faithful representations of the lamplighter group $\Z_2\wr\Z$ on the binary tree is conjugate to the one with the base group consisting of spherically homogeneous automorphisms.
\end{theorem}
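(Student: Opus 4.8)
The plan is to classify, up to conjugacy, all simple similarity pairs $(H,f)$ for $G=\Z_2\wr\Z$ with $[G:H]=2$, and then verify that each resulting state-closed representation has base group inside $\SHAut(\{0,1\}^*)$. First I would pin down the index-2 subgroups of $G=\langle a\rangle\wr\langle x\rangle$. Writing $A$ for the base group $\oplus_\Z\Z_2$, every index-2 subgroup $H$ contains the derived subgroup $G'$; since $G/G'\cong\Z_2\times\Z$ (generated by the images of $a$ and $x$), there are exactly three index-2 subgroups: $H_1=\langle G',a\rangle$ (the subgroup where the exponent sum of $x$ is even, i.e. $A\rtimes\langle x^2\rangle$), $H_2=\langle G',x\rangle$, and $H_3=\langle G', ax\rangle$. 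I would argue that $H_2$ and $H_3$ cannot produce faithful representations: both contain $A$, which is the unique maximal abelian normal subgroup, hence $A$-or-larger normal $f$-invariant subgroups are unavoidable unless $f$ is injective on $A$, and a short argument with the structure of $\mathrm{Hom}$s out of $\Z_2\wr\Z$ (any homomorphism kills the infinite-order part badly, or fails $f$-invariance) shows the $f$-core is nontrivial. So the only faithful option is $H=H_1=A\rtimes\langle x^2\rangle$, with transversal $\{e,x\}$.

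Next I would determine which virtual endomorphisms $f\colon H_1\to G$ are simple. Since $H_1=A\rtimes\langle x^2\rangle$, a homomorphism $f$ is given by its restriction to $A$ and the image of $x^2$. The key constraint is that the $f$-core must be trivial. I would show $f(A)\subseteq A$: indeed $A$ is characteristic, or more directly, $f$ restricted to $A=\oplus_\Z\Z_2$ is a map into $G$ whose image, being a quotient of a $2$-torsion group, is $2$-torsion, and the only reasonably large $2$-torsion subgroups of $G$ sit inside (conjugates of) $A$; then a normality/core argument forces $f(A)\le A$. Identifying $A$ with the Laurent polynomial module $\Z_2[x,x^{-1}]$ on which $x$ acts by multiplication, the condition that $f$ intertwines the $\langle x^2\rangle$-action (because $x^2\in H_1$ acts on $A$ by multiplication by $x^2$, and $f$ is a homomorphism) forces $f|_A$ to be multiplication by some element, combined with where $f$ sends $x^2$. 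Working through the Schreier data with transversal $\{e,x\}$ and wreath recursion~\eqref{eqn:virtual_end_rep}, the simplicity condition (trivial $f$-core) will single out a one-parameter-or-finite family of admissible $f$'s.

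Finally, for each admissible simple pair I would compute $\phi(a)$ explicitly via~\eqref{eqn:virtual_end_rep}. The point is that $a\in H_1=H$, so $h_0$ and $h_1$ in~\eqref{eqn:virtual_end_rep} both lie in $H$ and $\nu(a)$ is trivial (since $a\in H$), giving $\phi(a)=(\phi(f(a)),\phi(f(a)))$ — already the shape of a spherically homogeneous automorphism at the root, and then one induces on levels: $f(a)$ is again a ``short'' element whose $\phi$-image has the same repeating structure, so by an induction on the level (exactly parallel to the $g^{(n)}$-manipulations used in Lemma~\ref{lem:t_in_norm}) $\phi(a)$ has all first-level states equal at every vertex, i.e. $\phi(a)\in\SHAut(\{0,1\}^*)=\Aff_I(\{0,1\}^*)$. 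Since $A$ is generated by the $\langle x\rangle$-conjugates of $a$ and $\SHAut(\{0,1\}^*)$ is normalized by $\phi(x)$ (which one checks is affine, via Theorem~\ref{thm:normalizer}, or directly), the whole base group $\phi(A)$ consists of spherically homogeneous automorphisms. If some choices of $f$ give a base group not literally inside $\SHAut$, the remaining step is to exhibit a conjugating element of $\Aut(\{0,1\}^*)$ — here I expect the freedom to come precisely from conjugating by an element of $\Aff(\{0,1\}^*)$, bringing $\phi(x)$ into a normal form while keeping $\phi(A)$ abelian and level-homogeneous.

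The main obstacle I anticipate is the bookkeeping in the second step: showing that the $f$-core being trivial really does force $f|_A$ into the expected multiplication-by-Laurent-polynomial form and pins down $f(x^2)$, while simultaneously ruling out $H_2,H_3$. The abelian-normal-subgroup and $f$-invariance arguments are standard in spirit (cf.~\cite{nekrash_s:12endomorph,sidki_d:lamplighter}), but $\Z_2\wr\Z$ has enough index-2 subgroups and enough homomorphisms that care is needed to ensure no exotic faithful pair is missed; the reduction to spherical homogeneity, once the pair is in hand, should then be a clean induction.
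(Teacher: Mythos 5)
Your first step contains an error that inverts the whole classification, and everything downstream depends on it. The subgroups $H_2=A_0\langle x\rangle$ and $H_3=A_0\langle ax\rangle$ (where $A_0=(1+x)\Z_2[x,x^{-1}]$ is the commutator subgroup) do \emph{not} contain the base group $A$: they meet $A$ only in the index-two submodule $A_0$, because $a\notin H_2,H_3$. It is $H_1=A\rtimes\langle x^2\rangle$ that contains $A$, and that is precisely the subgroup which can never yield a faithful representation: every torsion element of $\Z_2\wr\Z$ lies in $A$, so any virtual endomorphism $f\colon H_1\to G$ satisfies $f(A)\le A$, whence $A$ is a nontrivial normal $f$-invariant subgroup of $G$ contained in $H_1$ and the $f$-core is nontrivial. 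Concretely, with $a\in H_1$ the permutation $\nu(a)$ is trivial and the recursion~\eqref{eqn:virtual_end_rep} gives $\phi(a)=\bigl(\phi(f(a)),\phi(f(a))\bigr)$ with $f(a)\in A$ again, so by induction on levels $\phi(A)=1$ --- the very computation you present as ``already spherically homogeneous'' is in fact the proof that this representation is not faithful. By contrast, $H_2$ and $H_3$ do admit simple virtual endomorphisms, namely $f\colon a^{(1+x)r(x)}\mapsto a^{u(x)r(x)}$ with $1+x$ not dividing $u(x)$, as in the Dantas--Sidki classification~\cite{sidki_d:lamplighter}; the standard lamplighter automaton arises exactly this way. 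So your elimination discards the two cases that actually occur and retains the one that does not.

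The paper's proof takes $H=A_0\langle x\rangle$ (the case $A_0\langle ax\rangle$ being analogous): there $a$ represents the nontrivial coset, the transversal is $\{e,a\}$, one computes $\phi(a)=(1,1)\sigma$ and $\phi(a^x)=\bigl(\phi(a^{u(x)}),\phi(a^{u(x)})\bigr)\sigma$, and more generally every element of $\phi(A)$ has equal first-level states which again lie in $\phi(A)$; induction on levels then gives spherical homogeneity, with the conjugacy in the statement absorbed by the choice of transversal. Your closing induction is the right idea in spirit, but it has to be run in this setting, where $a$ interchanges the two cosets of $H$, not for $H_1$.
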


\begin{proof}
The commutator subgroup $[G,G]$ of $G$ is $A_0=\{a^{(1+x)r(x)}\mid r(x)\in\Z_2[x,x^{-1}]\}$. To construct a representation of $G$ on the binary tree we start by picking an index 2 subgroup $H$ in $G$. The only such subgroups in $G$ are $A_0\langle x\rangle$ and $A_0\langle ax\rangle$. We will provide an argument below only for $H=A_0\langle x\rangle$. The case $H=A_0\langle ax\rangle$ is treated similarly.

According to~\cite{sidki_d:lamplighter} a simple virtual endomorphism $f\colon G\to H$ must map $A_0$ to $A$ by
\[f\colon a^{(1+x)r(x)}\mapsto a^{u(x)r(x)},\]
where $1+x$ is not a factor of $u(x)$. To write down the representation $\phi$ of $G$ on the tree $X^*$ for $X=\{0,1\}$ we use decomposition~\eqref{eqn:virtual_end_rep}. First we choose the transversal $T=\{e,a\}$ of $H$ in $G$; any other choice will produce a conjugate representation. Then a straightforward calculation yields:
\[\phi(a)=\bigl(\phi(f((e\cdot a)a^{-1})), \phi(f((a\cdot a)e^{-1}))\bigr)\sigma=(1,1)\sigma,\]
where $\sigma$ is the nontrivial element of $\Sym(X)$. Further,
\begin{multline*}
\phi(x)=\bigl(\phi(f((e\cdot x)e^{-1})), \phi(f((a\cdot x)a^{-1}))\bigr)=\bigl(\phi(f(x)), \phi(f(x^a))\bigr)=\bigl(\phi(f(x)), \phi(f(xa^{1+x}))\bigr)\\
=\bigl(\phi(f(x)), \phi(f(x))\phi(f(a^{1+x}))\bigr)=\bigl(\phi(f(x)), \phi(f(x))\phi(a^{u(x)})\bigr)
=\phi(f(x))^{(1)}\cdot\bigl(1,\phi(a^{u(x)})\bigr).
\end{multline*}
Therefore,
\[
\phi(a^x)=\bigl(\phi(a^{u(x)}),\phi(a^{u(x)})\bigr)\sigma
\]
and using the definition of $f$ for each $r(x)\in\Z_2[x,x^{-1}]$ we obtain that there exists $l(x)\in\Z_2[x,x^{-1}]$ such that
\[
\phi(a^{(1+x)r(x)})=\bigl(\phi(a^{l(x)}),\phi(a^{l(x)})\bigr).
\]

This proves that each element of the base group $A$ maps under the faithful representation $\phi$ to a spherically homogeneous automorphism of $X^*$.
\end{proof}

\section{Open questions}
\label{sec:open}

We conclude the paper with several questions regarding the class of affine automorphisms.

The most natural question is about the general structure of groups generated by automata defining affine automorphisms. The studied examples suggest the following question:
\begin{question}
Is it always the case that a group generated by an automaton defining an affine automorphism is either finite, or is a finite extension of the lamplighter type group of the form $\Z_d^k\wr\Z^l$?
\end{question}

Lamplighter type groups are often defined by reversible and bireversible automata. Our main example in Section~\ref{sec:example} is defined by bireversible automaton, and this is also the case for the 3-state 3-letter automaton generating $\Z_3\wr\Z$ studied by Bondarenko, D'Angeli and Rodaro in~\cite{bondarenko_dr:lamplighter}. The inverse of the standard representation of the lamplighter is defined by a reversible automaton. Since lamplighter groups are closely related to affine automorphisms, it is natural to ask the following question.
\begin{question}
Under what conditions is the automaton defining an affine automorphism reversible? bireversible?
\end{question}

From the algorithmic viewpoint, it is not clear how to check whether a given automaton defines an affine automorphism. In all examples that we dealt with, the proof relied on Theorem~\ref{thm:normalizer} and on a particular structure of a group. This would be useful to have a more uniform and efficient procedure.
\begin{question}
Is there an algorithm deciding whether a given automorphism of the tree defined by a finite initial automaton is affine?
\end{question}


\def\cprime{$'$} \def\cprime{$'$} \def\cprime{$'$} \def\cprime{$'$}
  \def\cprime{$'$} \def\cprime{$'$} \def\cprime{$'$} \def\cprime{$'$}
  \def\cprime{$'$} \def\cprime{$'$} \def\cprime{$'$} \def\cprime{$'$}
  \def\cprime{$'$} \def\cprime{$'$}

\end{document}